\documentclass[a4paper,reqno,english]{amsart}

\usepackage{tikz-cd}
\usetikzlibrary{calc}
\allowdisplaybreaks
\usepackage{latexsym}

\usetikzlibrary{positioning}
\usepackage{graphicx}
\usepackage{mathrsfs}
\usepackage{amsfonts}
\usepackage{amssymb}
\usepackage{amsmath}
\usepackage{amsthm}
\usepackage{amscd}
\usepackage[all,2cell]{xy}
\usepackage{color}
\usepackage{xcolor}
\usepackage[pagebackref,colorlinks]{hyperref}
\usepackage{enumerate}
\usepackage{bm}
\usepackage{bbm}
\usepackage[normalem]{ulem}
\usepackage{mathrsfs}

\usepackage{rotating}

\usepackage{comment}
\usepackage{hyperref}
\UseAllTwocells \SilentMatrices

\numberwithin{equation}{section}

\usepackage{comment}

\theoremstyle{plain}
\newtheorem{thm}{Theorem}[section]
\newtheorem{prop}[thm]{Proposition}
\newtheorem{cor}[thm]{Corollary}
\newtheorem{lemma}[thm]{Lemma}
\newtheorem{conj}[thm]{Conjecture}
\newtheorem{ques}[thm]{Question}

\theoremstyle{definition}
\newtheorem{deff}[thm]{Definition}
\newtheorem{example}[thm]{Example}

\theoremstyle{remark}

\newcommand{\ssink}{\operatorname{sink}}

\def\g{\gamma}

\def\G{\Gamma}

\def\a{\alpha}
\def\b{\beta}

\newcommand{\K}{\mathsf{k}}

\newcommand{\reg}{\operatorname{reg}}

\newcommand{\rr}{\operatorname{{\bf r}}}

\newcommand{\V}{\mathcal{V}}

\newcommand{\gr}{\operatorname{gr}}

\def \Z{\mathbb Z}
\def\-{\text{-}}

\begin{document}

\title[$K_0^{\gr}$ of weighted Leavitt path algebras]{The graded Grothendieck group $\bm{K}_0^{\gr}$ is full for \\weighted Leavitt path algebras}

\author{Remarl Joseph Damalerio}
\address{Remarl Joseph Damalerio: Mindanao State University - General Santos \& Mindanao State University - Iligan Institute of Technology, Philippines}
\email{rjdamalerio@gmail.com}

\author{Roozbeh Hazrat}
\address{Roozbeh Hazrat: Centre for Research in Mathematics and Data Science\\Western Sydney University, Australia} \email{r.hazrat@westernsydney.edu.au}

\author{Tran Giang Nam}
\address{Tran Giang Nam: Institute of Mathematics, VAST, 18 Hoang Quoc Viet, Nghia Do, Hanoi, Vietnam} 
\email{tgnam@math.ac.vn}

\subjclass[2020]{16W50,16S50,16S88,16E20,19K14}

\keywords{weighted Leavitt path algebra, graded ring, $\Gamma$-monoid, talented monoid, graded Grothendieck group}

\date{\today}

\begin{abstract}  
The Graded Classification Conjecture asserts that the graded Grothendieck group \(K_0^{\mathrm{gr}}\) is a complete invariant for the classes of Leavitt path algebras and graph \(C^*\)-algebras. The conjecture remains open, as neither a proof nor a counterexample is currently known.
In this article, we extend the study of this invariant to the class of vertex-weighted Leavitt path algebras. We show that \(K_0^{\mathrm{gr}}\) distinguishes weighted Leavitt path algebras from ordinary (unweighted) Leavitt path algebras. We further prove that an isomorphism between the graded Grothendieck groups of weighted Leavitt path algebras induces an isomorphism between the corresponding semilattices of vertex-generated ideals. In addition, we show that \(K_0^{\mathrm{gr}}\) classifies the classical Leavitt algebras \(L_\K(n,n+k)\). Next, we prove that \(K_0^{\mathrm{gr}}\) is a full functor on the category of all weighted Leavitt path algebras. Consequently, in the special case where all weights are equal to $1$, we recover the lifting theorem established independently by Arnone and Vas for Leavitt path algebras. This confirms one direction of the Graded Classification Conjecture for Leavitt path algebras.
\end{abstract}

\maketitle

\begin{center}
\emph{To Gene Abrams,
with affection and gratitude.}
\end{center}

\section{Introduction}
Given a graph $E$ and a field $\K$, Abrams and Aranda Pino in \cite{ap:tlpaoag05}, and independently Ara, Moreno, and Pardo in \cite{amp}, introduced the \emph{Leavitt path algebra} $L_\K(E)$, which is a quotient of a path algebra obtained by imposing some additional relations amongst the generators. These Leavitt path algebras generalise the algebras $L_{\K}(1, 1 +k)$ constructed by Leavitt \cite{leav:tmtoar}, as universal rings without invariant basis number, and are also the discrete, purely algebraic version of graph $C^*$-algebras. We refer the reader to \cite{lpabook} and \cite{willie} for a detailed history and overview of Leavitt path algebras. 

The classification of Leavitt path algebras has received a lot of attention, as they are closely related to the classification of graph $C^*$-algebras and those of symbolic dynamics. However, while graph $C^*$-algebras have been classified by means of $K$-theoretic invariants (see, e.g., \cite{Res2006, Rodam95, ERRS21}), an analogous classification for Leavitt path algebras remains conjectural. The Graded  Classification Conjecture for Leavitt path algebras states that the graded Grothendieck group $K^{\operatorname{gr}}_0$,  is a complete invariant for 
these algebras~(\cite{willie, haz2013}, \cite[\S7.3.4]{lpabook}):  
%That is, the unital Leavitt path algebras $L_\K(E)$ and $L_\K(F)$, associated to the graphs $E$ and $F$, respectively, are graded isomorphic if and only if there is a preordered $\mathbb Z$-module isomorphism $K^{\operatorname{gr}}_0(L_\K(E)) \rightarrow K^{\operatorname{gr}}_0(L_\K(F))$, mapping $[L_\K(E)]$ to $[L_\K(F)]$. The other question raised in the same paper was whether the $K^{\operatorname{gr}}_0$-functor is full, that is, a preordered $\mathbb Z$-homomorphism $K^{\operatorname{gr}}_0(L_\K(E)) \rightarrow K^{\operatorname{gr}}_0(L_\K(F))$, mapping $[L_\K(E)]$ to $[L_\K(F)]$, can be lifted to a graded homomorphism $L_\K(E)\rightarrow L_\K(F)$.

\begin{conj}\label{conjalg}
Let $E$ and $F$ be finite graphs and $\K$ a field.
\begin{enumerate}[\upshape(1)]

\item For any order preserving $\Z[x,x^{-1}]$-module homomorphism $\phi: K^{\gr}_0(L_\K(E)) \rightarrow K^{\gr}_0(L_\K(F))$ with 
$\phi([L_\K(E)])=L_\K(F)$, there exists a unital $\mathbb Z$-graded $\K$-homomorphism $\psi: L_\K(E) \rightarrow L_\K(F)$ such that $K^{\gr}_0(\psi) = \phi$.

\medskip 

    \item If in \textnormal{(1)}, the $\phi$ is an isomorphism, then there exists a unital $\mathbb Z$-graded $\K$-isomorphism $\psi: L_\K(E) \rightarrow L_\K(F)$ such that $K^{\gr}_0(\psi) = \phi$.

\end{enumerate}
\end{conj}

G. Arnone in \cite{arnone} and L. Vas in \cite{vas} independently answered part (1) of Conjecture~\ref{conjalg}  in the affirmative, using different approaches. In fact Arnone showed that the lifting map can be diagonal preserving graded $*$-homomorphism, while Vas established the result for graphs that may contain infinitely many edges. Part (2) of the Graded Classification Conjectures remains open, as neither a proof nor a counterexample is currently known. The graded Grothendieck group $K_0^{\gr}$ is closely related to Krieger's dimension group~\cite{Krieger1980}, thereby connecting the classification of  Leavitt path algebras with the classification of shift of finite types in symbolic dynamics~\cite{LindMarcus2}. For a comprehensive account of the graded classification conjectures and their connections to related topics, we refer the reader to \cite{willie}.

Whereas Leavitt path algebras model Leavitt algebras $L_\K(1,1+k)$, weighted Leavitt path algebras were introduced by the second author in \cite{H} to cover the more general Leavitt algebras $L_\K(n,n+k)$ \cite{leav:tmtoar}, which cannot, in general, be realised as ordinary Leavitt path algebras. We also note that weighted Leavitt path algebras are closely connected to sandpile models (see, e.g., \cite{GeneRooz, haznam, haznam1}).
Since weighted Leavitt path algebras are also naturally $\mathbb Z$-graded rings, it is natural to investigate whether the graded Grothendieck group $K_0^{\gr}$  carries structural information about these algebras analogous to that provided by $K_0^{\gr}$ in the ordinary Leavitt path algebra setting.

In this article, we show that $K^{\gr}_0$ detects the presence of weights, thereby distinguishing weighted Leavitt path algebras from their unweighted counterparts. This is not the case when one uses the ordinary $K_0$-group. Indeed, for $k,l, n\in \mathbb N$, there are order isomorphisms
\[K_0(L_\K(1,1+k))\cong K_0(L_\K(n,n+k))\cong K_0(L_\K(n+l,n+k+l))\cong \mathbb Z_{k}.\]
Nevertheless, the algebras are pairwise non-isomorphic: the first is an ordinary Leavitt path algebra, while the others arise only as weighted Leavitt path algebras.

For a vertex-weighted graph $(E,w)$, we show that $K_0^{\gr}$ is a full functor, that means, Conjecture~\ref{conjalg}(1) holds for vertex-weighted graphs. By specializing to the case where all weights are equal to one, we recover the corresponding results of Arnone and Vas mentioned above. Furthermore, we obtain that every order preserving $\mathbb{Z}$-module isomorphism between $K_0^{\gr}(L_{\K}(E, w))$ and $K_0^{\gr}(L_{\K}(F, w))$ induces an isomorphism between the semilattices of all vertex-generated ideals of $L_{\K}(E, w)$ and $L_{\K}(F, w)$, which provides further evidence in support of Conjecture~\ref{conjalg}(2) for vertex-weighted graphs. 

To do so, we begin by defining the graph monoid \(M_{(E,w)}\) and the talented monoid \(T_{(E,w)}\) associated with a vertex-weighted graph \((E,w)\). We show that \(T_{(E,w)}\) is a cancellative monoid and that its Grothendieck group completion is precisely the graded Grothendieck group \(K_0^{\gr}(L_\K(E,w))\). The cancellativity of \(T_{(E,w)}\) implies that it coincides with the positive cone of \(K_0^{\gr}(L_\K(E,w))\), allowing one to work directly with the monoid rather than with the \(K_0^{\gr}\)-group. Working with the talented monoid has the advantage of being independent of the algebra itself while still providing detailed information about its elements (such as minimal elements, atoms, and related invariants) and, consequently, about the geometry of the underlying graph, including the number of cycles and their lengths. Moreover, we use Bergman machinery \cite{bergman74} and its graded version developed in \cite{HazLiP} to establish Conjecture~\ref{conjalg}(1) for vertex-weighted graphs.

The paper is organised as follows. In Section \ref{talentedmon}, we prove that the graph monoid associated with an acyclic vertex-weighted graph is cancellative (Theorem \ref{prop:cancel-gmonoid}). Consequently, the talented monoid associated with a vertex-weighted graph is cancellative (Proposition~\ref{goodviasm}(2)). We also give nice connections between the graph monoid $M_{(E,w)}$ of a vertex-weighted graph $(E, w)$ and the talented monoid $T_{(E,w)}$ (Proposition \ref{goodviasm}). Moreover, we show that, in general, the talented monoid associated with a vertex-weighted graph cannot be realized as the talented monoid of an ordinary graph (Example \ref{ggfear}). In Section \ref{distin}, based on Section \ref{talentedmon}, we demonstrate that the positive cone of  the graded Grothendieck group $K^{\gr}_0(L_{\K}(E, w))$ of the Leavitt path algebra of a vertex-weighted graph $(E, w)$ is exactly the talented monoid of $(E, w)$ (Corollary \ref{cor:smashprod}), and show that the graded Grothendieck group $K^{\gr}_0$ distinguishes the class of vertex-weighted Leavitt path algebras from the unweighted class (Theorem \ref{mainresult-sec3}). Moreover, we prove that every order preserving $\mathbb{Z}$-module isomorphism between $K_0^{\gr}(L_{\K}(E, w))$ and $K_0^{\gr}(L_{\K}(F, w))$ provides an isomorphism between the semilattices of all vertex-generated ideals of $L_{\K}(E, w)$ and $L_{\K}(F, w)$ (Theorem \ref{mainesult-sec3.2}). In Section \ref{lifting}, we prove that Conjecture~\ref{conjalg}(1) holds for vertex-weighted graphs (Theorems \ref{weightedlift} and \ref{arnonevas}) and show that the graded Grothendieck group $K_0^{\gr}$ classifies the Leavitt algebras $L_\K(n,n+k)$ (Theorem \ref{thm:classify-gLAs}).

\section{Weighted graphs and graph monoids}\label{talentedmon}
The main goal of this section is to establish the graph monoid associated with an acyclic vertex-weighted graph is cancellative (Theorem \ref{prop:cancel-gmonoid}). Also, we completely describe the structure of all order ideals of the graph monoid associated with a vertex-weighted graph (Theorem \ref{sat-hered-ord-id-lat}) via the lattice of all saturated hereditary subsets. Moreover, we give several connections between the graph monoid $M_{(E,w)}$ associated with a vertex-weighted graph $(E, w)$ and the talented monoid $T_{(E,w)}$ (Proposition~\ref{goodviasm}); in particular, we provide a lattice isomorphism between the lattice of all order ideals of $M_{(E,w)}$ and the lattice of all $\mathbb Z$-order ideals of $T_{(E,w)}$.
\medskip

We start by setting some terminology and notation about graphs.  A \emph{(directed) graph} is a quadruple $E=(E^0,E^1,s,r)$, where $E^0$ and $E^1$ are sets and $s,r:E^1\rightarrow E^0$ are maps.   Throughout, ``graph" will always mean ``directed graph".  The elements of $E^0$ are called \emph{vertices} and the elements of $E^1$ \emph{edges}.   (We allow the empty set to be viewed as a graph with $E^0 = E^1 = \varnothing.$)   If $e$ is an edge, then $s(e)$ is called its \emph{source} and $r(e)$ its \emph{range}. If $v$ is a vertex and $e$ an edge, we say that $v$ {\it emits} $e$ if $s(e)=v$, and $v$ {\it receives} $e$ if $r(e)=v$.   An edge $e$ is called a {\it loop at} $v$ in case $s(e) = v = r(e)$.  
A vertex is called a
% \emph{source} if it receives no edges,
 {\it sink} if it emits no edges.
A vertex is called \emph{regular} if it is not a sink and does not emit infinitely many edges.  The subset of $E^0$ consisting of all the regular vertices is denoted by $E^0_{\reg}$. Similarly, the subset of $E^0$ consisting of all the sinks  is denoted by $E^0_{\ssink}$.

 A graph is called \emph{row-finite} if any vertex emits a finite number (possibly zero) of edges. The graph $E$ is called  \emph{finite} if $E^0$ and $E^1$ are finite sets.   In this paper we assume throughout that all graphs are  row-finite,  and thus $E^0_{\reg}$ consists of all vertices which are not sinks.

A  {\it path}  $p$ in $E$ is
a sequence $p=e_{1}e_{2}\cdots e_{n}$ of edges in $E$ such that
$r(e_{i})=s(e_{i+1})$ for $1\leq i\leq n-1$. We define $s(p) = s(e_{1})$, and $r(p) =r(e_{n})$. 
 By definition, the \emph{length} $|p|$ of $p$ is $n$. We assign the length zero to vertices.  A \emph{closed path} (based at $v$) is a  path $p$ such that $s(p)=r(p)=v$. A \emph{cycle} (based at $v$) is a closed path $p=e_1 e_2 \cdots e_n$ based at $v$ such that $s(e_i)\neq s(e_j)$ for any $i\neq j$.  We say a vertex $v\in E^0$ has \emph{no bifurcation} if $| s^{-1}(u)| \leq 1$ for all $u\in T(v)$, where $T(v)$ is the usual tree of the vertex $v$. 

A \emph{weighted graph} is a pair $(E,w)$, where $E$ is a graph and $w:E^1\rightarrow \mathbb N^+$ is a map. If $e\in E^1$, then $w(e)$ is called the \emph{weight} of $e$. 
For each regular vertex $v$ in a weighted graph $(E,w)$ we set $w(v):=\max\{w(e)\mid e\in s^{-1}(v)\}$.  This gives  a map (called $w$ again) $w:E_{\reg}^0\rightarrow \mathbb N^+$. 
A weighted graph is called a \emph{vertex-weighted graph} if, for each regular vertex $v$, all edges emitted from $v$ have the same weight, 
i.e., if $w(e) = w(e')$ for all $e,e' \in s^{-1}(v)$.  In this case coincides with the weight of any edge emitting from $v$. A weighted graph $(E, w)$ is called {\it acyclic} if the graph $E$ has no cycles.

Let $(E,w)$ be a weighted graph. An edge $e\in E^{1}$ is called {\it unweighted} if $w(e)=1$ and {\it weighted} if $w(e)>1$. The subset of $E^1$ consisting of all unweighted edges is denoted by $E_{uw}^1$ and the subset consisting of all weighted edges by $E_{w}^1$.% The weighted subgraph $(E_{T(E^{0}_w)},w_{T(E^{0}_w)})$ of $(E,w)$ defined by the hereditary subset $T(E^{0}_w)\subseteq E^0$ is called the {\it weighted part of $(E,w)$}.

%We are in a postion to define the commutative monoids we assigned to the weighted graphs. 

The notion of the monoid associated to a weighted graph was introduced by the second author in \cite{H} and later generalised to arbitrary weighted graphs by R. Preusser in \cite{P}.

\begin{deff}[{\cite[Definition 11]{P}}]\label{wgmonoid}
Let $(E,w)$ be a row-finite vertex-weighted graph. 
We define the  {\it weighted graph monoid} of $(E,w)$ to be 
\begin{equation}\label{graphmon}
M_{(E,w)} \  \ := \mathbb{F}_E  \  \Big / \ \Big \langle w(v) v= \sum_{e \in s^{-1}(v)} r(e) \mid v\in E_{\reg}^0\Big \rangle,
\end{equation} where $\mathbb{F}_E$ is  the free commutative monoid on the set $E^0$ of vertices of $E$.    
\end{deff}

We note that for an unweighted graph $E$ (that is, when the weight function is constantly equal to $1$), the graph monoid~(\ref{graphmon}) reduces to the commutative monoid $M_E$ originally defined in \cite{amp}. Moreover, there exist weighted graph monoids that cannot be realised as monoids associated to unweighted graphs (see, for example, \cite[Example 3.10]{haznam}).

Let $(E, \omega)$ be a vertex weighted graph. There is an explicit description of the congruence on the free commutative monoid $\mathbb{F}_E$ given by the defining relations of $M_{(E, \omega)}$ in Definition \ref{wgmonoid}, as follows. For $v\in E^0$, define the ``$\rr$-transform on $\mathbb{F}_E$" by setting  
\begin{equation}\label{shtrans}
\rr(\omega(v) v) \ : = \ \sum_{e\in s^{-1}(v)}r(e) \text{ for al } v\in E^0_{\reg}.
\end{equation}
The nonzero elements of $\mathbb F_E$ can be written uniquely up to permutation as $\sum_{i=1}^{n}k_iv_{i}$, where $v_{i}$ are distinct vertices and $k_i\in \mathbb N^+$. Define a binary relation
$\rightarrow_{1}$ on $\mathbb F_E$ by 
\begin{equation}\label{hfgtrgt655}
\sum_{i=1}^{n}k_iv_{i} \ \longrightarrow_{1} \  \Big( \sum_{i\neq
j}k_iv_{i} \Big) +(k_j-w(v_j))v_j+ \rr(w(v_j)v_j),
\end{equation}
whenever $j\in \{1, \cdots, n\}$ and 
$k_j\geq w(v_{j})$.    Let $\rightarrow$ be the transitive and reflexive closure of $\rightarrow_{1}$
on $\mathbb F_E$.  Namely 
\begin{equation}\label{hfgtrgt6551}
a\rightarrow b    \ \  \ \text{ if }  a=b, \ \mbox{or}  \ a=a_0 \rightarrow_1 a_1 \rightarrow_1 \dots \rightarrow_1 a_k=b.
\end{equation}

\noindent
Finally, let   $\sim$   be the congruence on $\mathbb F_E$ generated by the relation $\rightarrow$.   That is, $a\sim b$ in case  there is a string $a=a_0, a_1,\dots, a_n=b$ in $\mathbb F_E$ such that $a_i\rightarrow_1 a_{i+1}$ or $a_{i+1}\rightarrow_1 a_{i}$ for each $0\leq i \leq n-1$.   Then 
$$M_{(E,w)}=\mathbb F_E/\sim.$$
\noindent
To avoid cumbersome equivalence class notation,  is standard (but not technically correct) to denote the elements of $M_{(E,w)}$ and the elements of $\mathbb F_E$ using the same symbols.  For instance, we  will sometimes write $a=b$ in $M_{(E,w)}$ for  elements $a,b \in \mathbb F_E$.

The relation~\ref{hfgtrgt6551} may be interpreted as follows. Let 
\begin{center}
$x=\sum_i k_ix_i \in \mathbb{F}_E$,\quad\quad $x_i\in E^0$.    
\end{center}
If $k_i\geq w(x_i)$, then we may ``let the vertex $x_i$ flow'' by replacing the term $w(x_i)x_i$ with the sum of the vertices adjacent to $x_i$. More precisely, we obtain that

$$y_1=\left(\sum_{j\neq i}k_jx_j\right)+(k_i-w(x_i))x_i+\sum_{e\in s^{-1}(x_i)}r(e),$$ and write \(x\to y_1\). 
Observe that this operation is permitted only when the summand $k_i x_i$ appears in the expression for $x$ and satisfies $k_i\geq w(x_i)$.

Repeating this procedure and allowing a vertex of $y_1$ to flow, we obtain another element $y_2\in \mathbb F_E$ such that $y_1\to y_2$. In general, the relation $\to_1$ is applied successively to vertices appearing in the decomposition of elements of $\mathbb F_E$. By definition of the relation $\to$, every element $y\in \mathbb F_E$ satisfying $x\to y$ is obtained from $x$ through a finite sequence of such vertex flows.

Next we recall the  ``confluence"  property of the monoid $M_{(E,w)}$, established by Abrams and the second author in \cite[Lemma 3.1]{GeneRooz}. Informally, this property asserts that whenever two elements $a,b\in \mathbb F_E$ represent the same element in the quotient monoid $$M_{(E,w)}=\mathbb F_E/\sim,$$ there exists a common element of $\mathbb F_E$ to which both $a$ and $b$ flow.   

\begin{lemma}[The Confluence Lemma]\label{aralem6}
Let $(E,w)$ be a vertex-weighted graph and $M_{(E,w)}$ its associated monoid. For $a, b \in \mathbb F_E$, we have   $a=b$ in $M_{(E,w)}$  (i.e., $a\sim b$ in $\mathbb F_E$)  if and only if there exists  $c \in \mathbb F_E$ such that  $a \rightarrow c$ and $b\rightarrow c$. 
\end{lemma}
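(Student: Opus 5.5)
The plan is to follow the classical proof of the confluence lemma for graph monoids due to Ara, Moreno and Pardo. The point is that $\rightarrow$ is a ``commuting'' rewriting system.

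The direction ``$\Leftarrow$'' is immediate. If $a\rightarrow c$ and $b\rightarrow c$, then $c$ is reached from both $a$ and $b$ by chains of $\rightarrow_1$-steps. Joining the chain from $a$ to $c$ with the reversed chain from $b$ to $c$ gives a zig-zag from $a$ to $b$, so $a\sim b$.

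For ``$\Rightarrow$'', define $a\approx b$ to mean that some $c$ satisfies $a\rightarrow c$ and $b\rightarrow c$. It suffices to show that $\approx$ is an equivalence relation containing $\rightarrow_1$, since $\sim$ is by construction the equivalence relation generated by $\rightarrow_1$. Reflexivity, symmetry and containment of $\rightarrow_1$ are trivial. Two preliminary facts will be needed.
\begin{enumerate}[\upshape(i)]
\item \emph{Additivity.} If $a\rightarrow b$, then $a+x\rightarrow b+x$ for all $x\in\mathbb F_E$. This holds because a flow of $v$ is allowed whenever the coefficient of $v$ is at least $w(v)$, and adding $x$ only increases coefficients.
\item \emph{Splitting.} If $a+a'\rightarrow_1 y$, then $y=b+a'$ with $a\rightarrow_1 b$, or $y=a+b'$ with $a'\rightarrow_1 b'$. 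This holds in the unweighted case, but here it fails, because the coefficient $k_j\ge w(v_j)$ may come jointly from $a$ and $a'$. So I avoid splitting and prove a local confluence (diamond) property instead.
\end{enumerate}

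The diamond property states: if $x\rightarrow_1 y_1$ and $x\rightarrow_1 y_2$, then there is $z$ with $y_1\rightarrow z$ and $y_2\rightarrow z$. Suppose $y_1$ flows vertex $u$ and $y_2$ flows vertex $v$. If $u=v$, then $y_1=y_2$. If $u\neq v$, the coefficient of $v$ in $y_1$ is still at least $w(v)$: flowing $u$ only removes copies of $u$ and adds ranges. The same holds for $u$ in $y_2$. Hence both orders of performing the two flows are legal, and both give
\[z=x-w(u)u-w(v)v+\rr(w(u)u)+\rr(w(v)v).\]
By a standard induction on the length of the chains (a tiling argument), the diamond property extends to strong confluence: if $x\rightarrow y_1$ and $x\rightarrow y_2$, then $y_1\rightarrow z$ and $y_2\rightarrow z$ for some $z$. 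Concretely, one first inducts on the length of one chain with the other of length one, then on the length of the other chain. Each single-step diamond closes with at most one step on each side, so the tiling terminates. Termination is not needed anywhere, since we never claim normal forms.

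Transitivity of $\approx$ then follows. Suppose $a\rightarrow c$, $b\rightarrow c$, $b\rightarrow d$ and $e\rightarrow d$. Applying strong confluence at $b$ to the pair $c,d$ gives $f$ with $c\rightarrow f$ and $d\rightarrow f$. Then $a\rightarrow f$ and $e\rightarrow f$, so $a\approx e$. Thus $\approx$ is an equivalence containing $\rightarrow_1$, hence contains $\sim$, which proves ``$\Rightarrow$''.

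The main obstacle is the diamond step. One has to check carefully that flowing $u$ never lowers the coefficient of a different vertex $v$ below $w(v)$. This holds because $\rr(w(u)u)$ has nonnegative coefficients and the flow of $u$ subtracts only multiples of $u$. The case where $u$ has a loop, so that $u$ reappears in $\rr(w(u)u)$, needs a brief separate check but behaves the same way.
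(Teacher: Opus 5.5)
The paper gives no proof of this lemma; it imports it from \cite[Lemma 3.1]{GeneRooz}. There is therefore no in-paper argument to match, and your proposal supplies a correct, self-contained one.

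Your one-step diamond is right. For $u\neq v$, flowing $u$ changes the coefficient of $v$ only by adding its nonnegative coefficient in $\rr(w(u)u)$. For $u=v$ the step is deterministic, so $y_1=y_2$. Each diamond closes in at most one step on each side, so the strip-lemma tiling gives the Church--Rosser property for $\rightarrow$ without any termination hypothesis. That matters here, because flows need not terminate when there are cycles: a weight-one loop at $v$ already gives $v\rightarrow_1 v\rightarrow_1\cdots$.

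Your route also differs usefully from the Ara--Moreno--Pardo argument \cite{amp} for unweighted graphs, which uses a splitting lemma and then inducts on the length of the zig-zag. You correctly note that splitting fails once $w(v)>1$: with $w(v)=2$, the step $v+v\rightarrow_1\rr(2v)$ comes from neither summand. You replace splitting by local confluence of flows, which is the abelian property of sandpile toppling and the natural mechanism in the weighted setting.

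Two small remarks:
\begin{enumerate}
\item Your additivity fact (i) is listed but never used. It is exactly what justifies the identification you take as given: the equivalence closure of $\rightarrow_1$ is a congruence, and its generating pairs $(w(v)v+x,\,\rr(w(v)v)+x)$ are precisely the $\rightarrow_1$-steps. So $\mathbb F_E/\sim$ really is the monoid of Definition~\ref{wgmonoid}.
\item The separate check for a loop at $u$ is unnecessary, since only the coefficient of $v\neq u$ is at stake.
\end{enumerate}
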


Consequently, we obtain the following result, which follows immediately from \cite[Theorem 5.21]{H}, where Bergman algebras are used in the proof.

\begin{cor}\label{Conical}
For any vertex weighted graph $(E, \omega)$, the monoid $M_{(E, \omega)}$ is conical, i.e., for all $x, y\in M_{(E, \omega)}$, $x+y=0$ implies $x =y=0.$	
\end{cor}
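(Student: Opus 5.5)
The plan is to derive conicality directly from the Confluence Lemma (Lemma~\ref{aralem6}), using one elementary property of the flow relation: if $a\to c$ and $a\neq 0$ in $\mathbb F_E$, then $c\neq 0$. In other words, no sequence of flows can turn a nonzero element of $\mathbb F_E$ into $0$.

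First I check the one-step case. Suppose $a=\sum_i k_iv_i\neq 0$ and $a\rightarrow_1 b$ by letting $v_j$ flow, where $k_j\ge w(v_j)$ and $v_j\in E^0_{\reg}$. Then
\[
b=\sum_{i\neq j}k_iv_i+(k_j-w(v_j))v_j+\sum_{e\in s^{-1}(v_j)}r(e).
\]
Since $v_j$ is regular, $s^{-1}(v_j)\neq\varnothing$, so the last sum contains at least one vertex with positive coefficient. Hence $b\neq 0$ in the free commutative monoid $\mathbb F_E$. Note that a sink never flows, because the relation $\rightarrow_1$ is only defined at regular vertices. By induction on the length of a chain $a=a_0\rightarrow_1\cdots\rightarrow_1 a_k=c$, it follows that $a\to c$ with $a\neq 0$ forces $c\neq 0$.

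Now let $x,y\in M_{(E,w)}$ with $x+y=0$. Choose representatives $a,b\in\mathbb F_E$, so that $a+b\sim 0$ in $\mathbb F_E$. By Lemma~\ref{aralem6}, there is $c\in\mathbb F_E$ with $a+b\to c$ and $0\to c$. The element $0$ admits no $\rightarrow_1$ step, since it has no summand that could flow, so $c=0$. Thus $a+b\to 0$, and by the previous paragraph $a+b=0$ in $\mathbb F_E$. Since $\mathbb F_E$ is free, hence conical, we get $a=b=0$, and therefore $x=y=0$ in $M_{(E,w)}$.

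There is no serious obstacle here. The only point needing care is that the congruence $\sim$ is a priori generated in both directions, which is why the Confluence Lemma is essential: it reduces $a+b\sim 0$ to a one-directional flow $a+b\to 0$, and for one-directional flows the monotonicity argument above applies. Alternatively, one could cite \cite[Theorem 5.21]{H}, where the Bergman realization identifies $M_{(E,w)}$ with $\mathcal V(L_\K(E,w))$, and the monoid of isomorphism classes of finitely generated projective modules is always conical. The combinatorial route above is self-contained, so I would present that one.
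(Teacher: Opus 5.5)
Your proof is correct and follows essentially the same route as the paper: the Confluence Lemma reduces $x+y=0$ to a one-directional flow $x+y\to 0$ in $\mathbb F_E$, and such a flow is impossible unless $x+y=0$ already, since every flow step at a regular vertex produces at least one vertex. You are slightly more explicit than the paper on two points: you justify that $0\to c$ forces $c=0$, and you note that $s^{-1}(v_j)\neq\varnothing$ for regular $v_j$.
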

\begin{proof}
Let $x$ and $y$ be two elements in $M_{(E, \omega)}$ such that $x + y =0$. Then, by The Confluence Lemma, $ x + y \rightarrow 0$ in $\mathbb{F}_E$.  Suppose that
there is a vertex $w\in E^0$ appearing in the presentation of $x$. We note that any possible transformations of $w$ or its multiple would give a vertex and subsequently any further transformations always contain a vertex. Thus $x + y$ can not be transformed to $0$,  a contradiction.
\end{proof}

The next part of this section is to show that the weighted graph monoid associated to an  acyclic vertex-weighted graph is cancellative. To do so, we first establish several useful results. Recall that for a graph $E=(E^0,E^1,s,r)$, there exists a preorder ``$\geq$'' on $E^0$ defined by $u\ge v$
whenever there exists a path $p$ in $E$ such that $s(p)=u$ and $r(p)=v$. We write $u > v$ if $u\ge v$ and $u\neq v$. We also note that if $E$ is acyclic, then the preorder is exactly a partial order. 
%Given a set of vertices $S \subseteq E^0$, we define $$\text{Maximals}_{\ge}(S):= \{ u \in E^0 \mid \nexists\ x\in S \text{ such that } x \ge u \}.$$

\begin{lemma}\label{lm:redu}
Let $(E,w)$ be a finite acyclic vertex-weighted graph, and let $a$ be an arbitrary element of $M_{(E, w)}$. Then $a$ can be  written in the form 
\begin{equation}\label{lastt}
 a = \sum_{v\in E^0} k_vv,   
\end{equation}
 where $0\le k_v < w(v)$ for every $v \in E^0_{\reg}$.
\end{lemma}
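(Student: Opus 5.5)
We argue by a terminating flow process on representatives. Fix a representative $x=\sum_{v\in E^0} k_v v\in\mathbb F_E$ of $a$. If some regular vertex $v$ has $k_v\ge w(v)$, we apply one step $x\rightarrow_1 y$ of \eqref{hfgtrgt655} at $v$. This replaces $w(v)v$ by $\sum_{e\in s^{-1}(v)}r(e)$, and $y\sim x$, so $y$ still represents $a$ in $M_{(E,w)}$. We repeat as long as possible. If the process stops, the final element $x'=\sum k'_v v$ has $k'_v<w(v)$ for every $v\in E^0_{\reg}$. Sinks impose no constraint, so $x'$ is the required form \eqref{lastt}.

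The main point is termination, and this is where we use that $E$ is finite and acyclic. Because $E$ is acyclic, the preorder $\ge$ on $E^0$ is a partial order. Since $E^0$ is finite, we can fix a \emph{height function} $h:E^0\to\mathbb N$ such that $h(v)>h(u)$ whenever $v>u$. For instance, let $h(v)$ be the length of the longest path starting at $v$; this is finite because there are finitely many vertices and no cycles. In particular $h(r(e))<h(s(e))$ for every edge $e$.

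Let $N=|E^0|$ and encode $x=\sum k_v v$ by the vector $\big(\sum_{h(v)=N}k_v,\dots,\sum_{h(v)=0}k_v\big)\in\mathbb N^{N+1}$, ordered lexicographically with the highest level compared first. A flow step at $v$ with $h(v)=m$ lowers the level-$m$ entry by $w(v)\ge 1$. It only adds vertices $r(e)$ of height $<m$, and it leaves all levels above $m$ unchanged. So each step strictly decreases the vector in the lexicographic order. Since the lexicographic order on $\mathbb N^{N+1}$ is a well-order, there is no infinite sequence of flow steps, and the process terminates.

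An equivalent and more explicit route is induction on height. First flow all vertices at the top level $N$, pushing each $k_v$ down to $k_v \bmod w(v)$ by repeated steps at $v$. No later step ever adds mass at level $N$, because no edge enters a vertex of maximal height from a lower one. Then treat level $N-1$, and so on down to level $0$. Either way, the only thing that needs care is choosing the height function from acyclicity. Finiteness of the graph ensures both that $h$ exists and that the vectors have finite length.
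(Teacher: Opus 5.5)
Your proof is correct, and its core idea is the paper's: push a representative forward along $\rightarrow_1$ and let finiteness and acyclicity force termination. The difference is in how termination is certified.

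The paper follows a specific strategy. It takes the set $H^{\max}_X$ of $\ge$-maximal regular vertices whose coefficient is at least their weight. It replaces each $m_u u$ by $r_u u + q_u\sum_{e\in s^{-1}(u)} r(e)$ in a single stroke, and then iterates on $X_1 = X\cup r(s^{-1}(H^{\max}_X))$. Termination rests on the observation that every element of $H^{\max}_{X_1}$ lies strictly below some element of $H^{\max}_X$, so the process descends the finite poset $(E^0,\ge)$. The paper leaves this descent implicit (``after $n_0$ steps''). Your second, level-by-level route is essentially this strategy organised by height.

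Your main argument instead packages the descent into one well-founded measure: the height $h$ with $h(r(e))<h(s(e))$, together with the lexicographically ordered level vector. This proves slightly more, namely that every sequence of flow steps terminates, in whatever order the steps are performed. The lemma, and its use in Theorem~\ref{prop:cancel-gmonoid}, only need the existence of a reduced representative; uniqueness there comes from the Confluence Lemma~\ref{aralem6}. So the extra strength is a bonus. The main gain is a termination argument that is fully explicit and independent of choices.
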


\begin{proof}
If $a =0$, then the conclusion is obvious. Hence, we can assume that $a \neq 0$. Then, there exists a nonempty subset $X\subseteq E^0$ such that
\begin{equation*}a = \sum_{v\in X}m_vv,  \end{equation*}where $m_v\in \mathbb{N}\setminus\{0\}$. Let $$H_X := \{v\in X \mid v\in E^0_{\reg} \text{ and } m_v \ge w(v)\}.$$
If $H_X = \emptyset$, then $a$ is already in the form of~(\ref{lastt}).  Otherwise, we denote by $H_X^{\max}$ the set of all maximal elements of $H_X$ with respect to the partial ordering $\geq$. This set is not empty as $E$ is a finite acyclic graph. For $u \in H_X^{\max}$, we write $m_u = q_uw(u) + r_u$, where $0\le r_u < w(u)$. Since $w(u)u = \sum_{e\in s^{-1}(u)}r(e)$, it follows that $$m_uu = r_uu + \sum_{e\in s^{-1}(u)}q_u r(e).$$ Therefore, we obtain that 
$$a = r_uu + \sum_{v\in H^{\max}_X\setminus\{u\}}m_vv + \sum_{e\in s^{-1}(u)}q_u r(e) + \sum_{v\in X\setminus H^{\max}_X}m_vv.$$
Continuing this same process now for $v\in H^{\max}_X\setminus \{u\}$, we see that after $|H^{\max}_X|$ steps we obtain that
%$$a = \sum_{u\in H^{\max}_X}r_uu + \sum_{u\in H^{\max}_X}\sum_{e\in s^{-1}(u)}q_u r(e) + \sum_{v\in X\setminus H^{\max}_X}m_vv.$$
\begin{align*}
 	a&=\sum_{u\in H^{\max}_X}r_uu + \sum_{u\in H^{\max}_X}\sum_{e\in s^{-1}(u)}q_u r(e) + \sum_{v\in X\setminus H^{\max}_X}m_vv\\&=\sum_{u\in H^{\max}_X}r_uu + \sum_{v\in r(s^{-1}(H^{\max}_X))) \cup (X\setminus H^{\max}_X)}m'_vv.
 \end{align*}
Let 
\begin{center}
$X_1 = X \cup r(s^{-1}(H^{\max}_X))$ and $H_{X_1}:= \{v\in X_1 \mid v\in E^0_{\reg} \text{ and } m'_v \ge w(v)\}$.    
\end{center}
If $H_{X_1} = \emptyset$, then the conclusion is obvious.
Otherwise, we denote by $H^{\max}_{X_1}$ the set of all maximal elements of $H_{X_1}$. Since $E$ is acyclic, it follows that $H^{\max}_{X} \cap H^{\max}_{X_1} = \varnothing$
and for each $v\in H^{\max}_{X_1}$, there exists a vertex $u\in H^{\max}_{X}$ such that $u > v$. Continuing the above same process now for $v\in H^{\max}_{X_1}$, we see that after $|H^{\max}_{X_1}|$ steps we obtain that
%$$a = \sum_{u\in H^{\max}_X}r_uu + \sum_{u\in H^{\max}_X}\sum_{e\in s^{-1}(u)}q_u r(e) + \sum_{v\in X\setminus H^{\max}_X}m_vv.$$
\begin{align*}
 	a&=\sum_{u\in H^{\max}_X}r_uu + \sum_{x\in H^{\max}_{X_1}}r_xx+\sum_{v\in r(s^{-1}(H^{\max}_{X_1})) \cup (X_1\setminus (H^{\max}_X \cup H^{\max}_{X_1}))}m''_vv,
 \end{align*}
where $r_u < w(u)$ for all $u\in H^{\max}_{X}$ and 
$r_x < w(x)$ for all $x\in H^{\max}_{X_1}$.

Continuing this same process now on the set $X_2:= X_1 \cup r(s^{-1}(H^{\max}_{X_1}))$, we see that after $n_0$ steps we obtain that $H_{X_{n_0}} = \varnothing$, thus finishing the proof.
\end{proof}

\begin{lemma}\label{lm:redu-sink}
Let $(E,w)$ be a finite acyclic vertex-weighted graph, and let $a$ be a nonzero element of $\mathbb{F}_E$. Then there exist a positive integer $n$ and a nonzero element $$a' = \sum_{v\in E^0_{\ssink}}n_vv\in \mathbb{F}_E$$ such that $na\rightarrow a'$.
\end{lemma}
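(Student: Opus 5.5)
Since $E$ is finite and acyclic, the relation $\geq$ on $E^0$ is a partial order, so I can fix a \emph{height} function $h:E^0\to\mathbb N$ with $h(v)=0$ for sinks and, for regular $v$, $h(v)=1+\max\{h(r(e))\mid e\in s^{-1}(v)\}$. I would prove by induction on $h$ the following single-vertex claim: for every $v\in E^0$ there are a positive integer $N_v$ and a nonzero element $b_v\in\mathbb F_E$ supported on $E^0_{\ssink}$ such that $N_v v\rightarrow b_v$. The lemma then follows at once. Write $a=\sum_{v\in X}m_vv$ with $X\neq\varnothing$ and put $n=\prod_{v\in X}N_v$. Then $na=\sum_{v\in X} m_v\frac{n}{N_v}(N_vv)$. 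Since $\rightarrow$ is compatible with addition, each block $N_vv$ can be flowed separately to $b_v$, giving $na\rightarrow\sum_{v\in X}m_v\frac{n}{N_v}b_v=:a'$. This element is supported on sinks, and it is nonzero because each $b_v\neq 0$ and $m_v\ge1$.

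\textbf{Compatibility of $\rightarrow$.} I would first record two facts. First, if $x\rightarrow y$, then $x+z\rightarrow y+z$ for any $z\in\mathbb F_E$, since each $\rightarrow_1$ step only needs the coefficient of the flowing vertex to be at least its weight, and adding $z$ does not decrease coefficients. Second, if $x\rightarrow y$ and $x'\rightarrow y'$, then $x+x'\rightarrow y+y'$; this follows by applying the first fact twice. In particular, $kx\rightarrow ky$ for every $k\in\mathbb N^+$.

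\textbf{Induction on height.} If $v$ is a sink, take $N_v=1$ and $b_v=v$. If $v$ is regular, then $w(v)v\rightarrow_1\sum_{e\in s^{-1}(v)}r(e)$, and every $r(e)$ has smaller height, so by induction $N_{r(e)}r(e)\rightarrow b_{r(e)}$. Let $L=\prod_{e\in s^{-1}(v)}N_{r(e)}$, which is finite because $E$ is finite, and set $N_v=w(v)L$. Then
\[
N_vv\rightarrow L\sum_{e\in s^{-1}(v)}r(e)=\sum_{e}\frac{L}{N_{r(e)}}\,N_{r(e)}r(e)\rightarrow\sum_e\frac{L}{N_{r(e)}}\,b_{r(e)}=:b_v .
\]
This is nonzero since $s^{-1}(v)\neq\varnothing$ and each $b_{r(e)}\neq0$, and it is supported on sinks.

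\textbf{Main obstacle.} The only real point is bookkeeping: the multiplier $n$ must clear the weights at every stage, and the flow relation must be additive. Both are handled above. Acyclicity enters only to guarantee that heights are well defined, so that every vertex eventually reaches sinks; finiteness makes the products finite. Nonvanishing is the same observation as in Corollary~\ref{Conical}: a flow never destroys all vertices.
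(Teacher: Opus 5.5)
Your argument is correct, but it is organized differently from the paper's.

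The paper works top-down on the given element $a=\sum_{v\in X}m_vv$. It forms the ``generations'' $H_0=X$ and $H_k=r(s^{-1}(H_{k-1}))\cup(H_{k-1}\cap E^0_{\ssink})$, and notes that finiteness and acyclicity give some $n_0$ with $\varnothing\neq H_{n_0}\subseteq E^0_{\ssink}$. Then, round by round, it multiplies the current element by the least integer $k_u$ making the coefficient of each non-sink vertex $u$ divisible by $w(u)$, and flows $u$ completely. After $n_0$ rounds only sinks remain.

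You instead prove a single-vertex statement $N_vv\rightarrow b_v$ bottom-up, by induction on height. You then assemble the general case using the compatibility of $\rightarrow$ with sums and positive multiples.

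The underlying idea is the same: clear the weights by scaling and let everything drain to the sinks, which acyclicity guarantees. What your decomposition buys is airtight bookkeeping. Additivity of $\rightarrow$ absorbs all interactions between vertices in the support. For instance, a vertex of $X$ may have to be flowed again after receiving mass from another vertex of $X$; the paper's round-by-round description treats this only loosely. Your induction also carries nonvanishing along at each step, instead of reading it off at the end.

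The paper's route, in exchange, stays with the single element $a$ and keeps each multiplier minimal. The size of $n$ is irrelevant for the lemma's only use, the cancellativity argument in Theorem~\ref{prop:cancel-gmonoid}.
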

\begin{proof}
Since $a\in \mathbb{F}_E\setminus \{0\}$, there exists a nonempty subset $X\subseteq E^0$ such that
\begin{equation*}a = \sum_{v\in X}m_vv,  \end{equation*}where $m_v\in \mathbb{N}\setminus\{0\}$. We now define $H_0 = X$ and for $n\ge 1$ we define inductively $$H_n = r(s^{-1}(H_{n-1})) \cup (H_{n-1} \cap E^0_{\ssink}).$$
Since $E$ is finite and acyclic, there exists a natural number $n_0$ such that $H_{n_0}$ is a nonempty subset of $E^0_{\ssink}$. Among these numbers, we may choose $n_0$ to be minimal. For $z \in H_1\setminus E^0_{\ssink}$, there exist a vertex $u\in X$ and an edge $f\in s^{-1}(u)$ such that $z = r(f).$ Let $k_u$ be the smallest positive integer such that $k_um_u = l_uw(u)$ for some $l_u\in \mathbb{N}$. We then have 
\begin{equation*}k_ua = k_um_u u + \sum_{v\in X\setminus\{u\}}k_um_vv  \end{equation*}
and  $$w(u)u = \sum_{e\in s^{-1}(u)}r(e)= z + \sum_{e\in s^{-1}(u)\setminus\{f\}}r(e),$$ where $r(e)\in H_1$ for all $e\in s^{-1}(u)\setminus\{f\}$. Therefore, we obtain that
$$k_ua = \sum_{v\in X\setminus\{u\}}k_um_vv + l_u z + \sum_{e\in s^{-1}(u)\setminus\{f\}}l_ur(e).$$  Continuing this same process now for $v\in H_1\setminus  (E^0_{\ssink} \cup \{z\})$, we see that after at most $|H_1\setminus  E^0_{\ssink}|$ steps we obtain that
$$n'a = \sum_{v\in H_1}m'_v v,$$ where $n'\ge 1$ and each $m'_v \ge 1$. Continuing this same process now on the set $H_1$, we see that after $n_0$ steps we obtain that
$$na = \sum_{v\in H_{n_0}}n_v v,$$  where $n\ge 1$ and each $n_v \ge 1$, thus finishing the proof.
\end{proof}

Let \((E,w)\) be a weighted graph. A {\it weighted subgraph} $X$ of $(E, w_E)$ is the weighted graph $$X = (X^0, X^1, r_X, s_X, w_X)$$ such that $X^0\subseteq E^0$, $X^1\subseteq E^1$, and $r_X, s_X$ are respectively the restrictions of $s_E, r_E$ on $X^1$ and 
\begin{center}
$w_X(e) = w_E(e)$ for all $e\in X^1$.   
\end{center}
A weighted subgraph $X$ is called {\it complete} if $$s^{-1}_X(v) = s^{-1}_E(v)$$ for all $v\in X^0_{\reg}$. 
By \cite[Lemma 5.19]{H}, the weighted graph $E$ can be expressed as the direct limit $$(E,w)=\varinjlim (E_i,w_i),$$
where $\{(E_i,w_i)\}$ is a directed system of finite  complete weighted subgraphs of $(E,w)$. 

We are now in a position to state the first main result of this section.

\begin{thm}\label{prop:cancel-gmonoid}
For every acyclic vertex-weighted graph $(E,w)$, the monoid $M_{(E,w)}$ is cancellative.    
\end{thm}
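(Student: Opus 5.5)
We argue in two stages: first reduce to the case of a finite graph, and then prove the finite case by induction on the number of vertices, removing a source at each step and using the Confluence Lemma (Lemma~\ref{aralem6}).

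\emph{Reduction to finite graphs.} By \cite[Lemma 5.19]{H} we write $(E,w)=\varinjlim (E_i,w_i)$ with each $(E_i,w_i)$ a finite complete weighted subgraph. Each $E_i$ is acyclic since $E$ is. The relations defining $M_{(E,w)}$ involve only finitely many vertices at a time, and completeness ensures that every relation available at a regular vertex of $E_i$ is the full relation of $E$. Hence $M_{(E,w)}=\varinjlim M_{(E_i,w_i)}$. Suppose $a+c=b+c$ in $M_{(E,w)}$. By Lemma~\ref{aralem6} there is a finite chain of flows witnessing this, and it lives in some $\mathbb F_{E_i}$. So the equality already holds in $M_{(E_i,w_i)}$, and it suffices to prove cancellativity for finite acyclic $(E,w)$.

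\emph{Finite case.} We induct on $|E^0|$; the case $E^0=\varnothing$ is trivial. Since $E$ is finite and acyclic, it has a source $u$, i.e.\ a vertex receiving no edges. Let $F$ be the graph obtained by deleting $u$ and the edges it emits. Then $F^0$ is hereditary and $F$ is a complete subgraph, so every flow inside $\mathbb F_F$ is also a flow in $\mathbb F_E$. By induction, $M_{(F,w)}$ is cancellative. It suffices to cancel a single vertex $c$, since general $c$ then follows by induction on the length of $c$.

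\emph{Key step: rearranging flows.} Let $\rho_u:=\sum_{e\in s^{-1}(u)}r(e)\in\mathbb F_F$ (if $u$ is a sink it is isolated, and the argument below degenerates to the trivial statement that $\mathbb F_{\{u\}}\times M_{(F,w)}$ is cancellative). Write $a=ku+x$ and $b=lu+y$ with $x,y\in\mathbb F_F$. By Lemma~\ref{aralem6}, $a+c$ and $b+c$ flow to a common $d$. I would prove that any flow sequence starting from $mu+z$ can be reordered so that all $u$-steps happen first. This holds because $u$ is a source: its coefficient never increases, and a $u$-step only adds $\rho_u$ to the $F$-part, which never disables a later step. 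Consequently, if $s$ and $t$ are the numbers of $u$-steps in the two sequences, we get
\[
k-w(u)s=l-w(u)t,
\]
and also $x+c'+s\rho_u\sim y+c'+t\rho_u$ in $M_{(F,w)}$. Here $c'=c$ if $c\in F^0$. If $c=u$, the extra $u$ is absorbed symmetrically on both sides: the same identity holds with $c'=0$ after replacing $k,l$ by $k+1,l+1$.

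\emph{Conclusion.} By symmetry assume $s\ge t$. Cancelling $c'+t\rho_u$ in $M_{(F,w)}$ gives $x+(s-t)\rho_u\sim y$. Since $k=l+w(u)(s-t)$, we may apply $s-t$ flows at $u$ to $a$:
\[
a\to lu+x+(s-t)\rho_u\sim lu+y=b,
\]
so $a=b$ in $M_{(E,w)}$.

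The main obstacle is the rearrangement claim: that flow sequences can be reordered with all $u$-steps first, and that the remaining steps then form a valid flow sequence inside $\mathbb F_F$. This needs a careful commutation argument for $\to_1$ steps, using that the vertex $u$ never reappears once it has flowed.
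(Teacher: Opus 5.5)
Your proof is correct, but in the finite case it takes a genuinely different route from the paper. The reduction to finite complete subgraphs is the same idea as in the paper. You justify it directly through Lemma~\ref{aralem6}, which works provided $E_i$ is chosen to contain every vertex at which a flow occurs, together with all its edges.

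The paper does no induction on $|E^0|$. Instead it argues as follows:
\begin{enumerate}
\item It uses Lemma~\ref{lm:redu} to replace $a,b$ by representatives $a',b'$ whose coefficient at each regular vertex $v$ is below $w(v)$.
\item It uses Lemma~\ref{lm:redu-sink} to find $n\ge 1$ with $nc\to c'$, where $c'$ is supported on sinks.
\item Since $a+nc=b+nc$, it gets $a'+c'=b'+c'$ in $M_{(E,w)}$, with neither side admitting any flow.
\item The Confluence Lemma then forces this equality already in $\mathbb F_E$, and one cancels in the free monoid.
\end{enumerate}
The trick there is that pushing $c$ into the sinks cannot create a reducible coefficient, so no flow sequence ever has to be analysed. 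Implicitly, the paper's argument also shows that each element of $M_{(E,w)}$ has a unique irreducible representative.

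Your argument replaces both reduction lemmas and the passage to $nc$ by the commutation property of flows at a source. You justify this property correctly:
\begin{itemize}
\item The coefficient of $u$ changes only at $u$-steps, so moving them to the front keeps each of them legal.
\item The added $\rho_u$ only raises coefficients on $F$, so every later step stays legal, and the tail is a flow in $\mathbb F_F$.
\end{itemize}
The cost is an induction and more bookkeeping. The gain is a recursive description of equality: for $u$ not a sink and $k\ge l$, one has $ku+x=lu+y$ in $M_{(E,w)}$ if and only if $k-l=jw(u)$ and $x+j\rho_u=y$ in $M_{(F,w)}$ for some $j\ge 0$.
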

\begin{proof}
Let $(E, w)$ be an arbitrary acyclic vertex-weighted graph. By \cite[Lemma 5.19 (1)]{H}, it follows that $$(E,w)=\varinjlim (E_i,w_i),$$ where each $(E_i, w_i)$ is a finite complete weighted subgraphs of \((E,w)\).  Since $(E, w)$ is an acyclic vertex-weighted graph, each $(E_i,w_i)$ is also an acyclic vertex-weighted graph. By \cite[Lemma 5.19 (2) and Theorem 5.21]{H}, the monoid $M_{(E, w)}$ can be realised as the directed limit $$M_{(E,w)}=\varinjlim M_{(E_i,w_i)}.$$
From this observation, it suffices to prove that $M_{(E, w)}$ is cancellative in the case where $(E, w)$ is a finite acyclic vertex-weighted graph.
    
Assume that $(E,w)$ is a finite acyclic vertex-weighted graph. Let $a,b,c$ be elements of $\mathbb F_E$ such that $a+c = b+c$ in $M_{(E,w)}$. Equivalently, $a+ nc = b+ nc$ in $M_{(E,w)}$ for all $n\ge 1$.
We claim that $a = b$ in $M_{(E,w)}$. The conclusion is immediate when $c=0$. Thus, we may assume that $c\neq 0$. Then, by Lemma \ref{lm:redu-sink}, there exist a positive integer $n$ and a nonzero element $$c' = \sum_{v\in E^0_{\ssink}}k_v v\in \mathbb{F}_E$$ such that $nc \rightarrow c'.$ Consequently, we have 
$nc = c'$ in $M_{(E, w)}.$    
By Lemma \ref{lm:redu}, there exist elements 
$a' = \sum_{v\in E^0}n_v v$ and $b' = \sum_{v\in E^0}v_v v\in \mathbb{F}_E$  
such that 
\begin{center}
$a = a'$   and $b = b'$ in $M_{(E, w)},$  
\end{center}
where $m_v < w(v)$ and $n_v < w(v)$ for all $v\in E^0_{\reg}$. From these observations, it follows that 
\begin{center}
$a' + c' = b' + c'$ in $M_{(E,w)}$.    
\end{center}
By the Confluence Lemma~\ref{aralem6}, there exits an element $x \in \mathbb{F}_E$ such that  $a' + c' \rightarrow x$ and $b' + c' \rightarrow x$.    
By the construction of $a'$, $b'$ and $c'$, neither $a'+c'$ nor $b'+c'$ admits any further reductions. Hence, 
$a' + c' = x = b' + c'$ in $\mathbb{F}_E$. This implies that $a' = b'$ in $\mathbb{F}_E$. Consequently, we have
$a = b$ in $M_{(E, w)}$.    
This finishes the proof.
\end{proof}

In the following examples, we present a vertex-weighted graph whose associated graph monoid is cancellative, and another vertex-weighted graph whose graph monoid is not cancellative.

\begin{example}\label{cycliccancellative}
(1) Let $(E,w)$ be the following vertex-weighted graph
    \[
\begin{tikzpicture}[
    >=Stealth,
    shorten >=1pt,
    every node/.style={font=\small},
    scale=0.95,
    transform shape
]

\node (u) at (0,0) {$u$};
\node (v) at (2,0) {$v$};

% edges between u and u
\draw[->]
    (u) edge[
        in=135,
        out=225,
        looseness=8
    ]
    node[left] {$(e,2)$}
    (u);

% edges between u and v
\draw[->] 
    (u) to[bend left=40] 
    node[midway, above] {$(f,2)$} 
    (v);

\draw[->] 
    (v) to[bend left=40] 
    node[midway, below] {$(g,1)$} 
    (u);
    
\end{tikzpicture}
\]
where $w(e) =2= w(f)$ and $w(g) =1$. We then have $$M_{(E,w)} = \mathbb{F}_{\{u, v\}} \big/ \big\langle 2u=u+v, ~ v=u \big\rangle\cong \mathbb{F}_{\{u, v\}} \big/ \big\langle  v=u \big\rangle \cong \mathbb{N}.$$ 
This implies that $M_{(E,w)}$ is cancellative.

(2) Let $(E,w)$ be the following vertex-weighted graph 
    \[
\begin{tikzpicture}[
    >=Stealth,
    shorten >=1pt,
    every node/.style={font=\small},
    scale=0.95,
    transform shape
]

\node (u) at (0,0) {$u$};
\node (v) at (2,0) {$v$};

% edges between u and u
\draw[->]
    (u) edge[
        in=135,
        out=225,
        looseness=8
    ]
    node[left] {$(e,2)$}
    (u);

% edges between u and v
\draw[->] 
    (u) to[bend left=40] 
    node[midway, above] {$(f,2)$} 
    (v);

\draw[->] 
    (v) to[bend left=40] 
    node[midway, below] {$(g,2)$} 
    (u);

% edges between v and v
\draw[->]
    (v) edge[
        in=-45,
        out=45,
        looseness=8
    ]
    node[right] {$(h,2)$}
    (v);
    
\end{tikzpicture}
\] where $w(e) = w(f) = w(g) = w(h) = 2$. We then have 
$$M_{(E,w)} = \mathbb{F}_{\{u, v\}} \big/ \big\langle 2u=u+v, ~ 2v=u +v \big\rangle. $$
Assume that $M_{(E,w)}$ is cancellative. Then the relation $2u=u+v$ would imply $u=v$ in $M_{(E,w)}$. 
By The Confluence Lemma, there exits an element $x \in \mathbb{F}_E$ such that 
\begin{center}
$u \rightarrow x$ and $v \rightarrow x$.    
\end{center}
However,  neither $u$ nor $v$ admits any further reductions, and so $u = x = v$ in $\mathbb{F}_E$, a contradiction.
Hence, the monoid $M_{(E,w)}$ is not cancellative.
\end{example}

We note that for every (unweighted) graph $E$, the graph monoid $M_E$ is cancellative if and only if no cycle in $E$ has exits (see, e.g., \cite[Theorem 4.2]{GeneNamPhuc}). However, Example \ref{cycliccancellative}(1) shows that this characterization does not, in general, extend to vertex-weighted graphs. In light of this observation, it is natural to ask the following question.

\begin{ques}
Can one characterise all vertex-weighted graphs whose graph monoids are cancellative?
\end{ques}

%in Example \ref{cycliccancellative}(1), we used the fact that the monoid \(M_{(E,w)}\) is free to show cancellativity. In the case of finite unweighted graphs, a similar phenomenon occurs; in fact, the graph monoid of a finite unweighted graph is cancellative if and only if it is free (in the proof of \cite[Lemma 5.5]{ara-hazrat-li-sims}). One distinction between weighted and unweighted graph monoids is that a cancellative weighted graph monoid need not be free, as illustrated in the following example.\begin{example}
%    Let $(E,w)$ be the following vertex-weighted graph     \[
%\begin{tikzpicture}[    >=Stealth,
 %   shorten >=1pt,
%    every node/.style={font=\small},
%    scale=0.95,
%    transform shape]

%\node (u) at (0,0) {$u$};
%\node (v) at (2,0) {$v$};

% edges between u and v\draw[->] 
 %   (u) to[bend left=30] 
%    node[midway, above] {$(e,3)$}     (v);
%\draw[->] 
%    (u) to[bend right=30] 
%    node[midway, below] {$(f,3)$} 
%    (v); 
%\end{tikzpicture}
%\]
%where $w(e)=w(f)=3.$ We then have 
%\[
%M_{(E,w)} = \F_{\{ u,v \}} \big/ \big\langle  3u=2v
%\big\rangle
%\]
%Since $E$ is acyclic, by Proposition \ref{prop:cancel-gmonoid}, $M_{(E,w)}$ is cancellative. On the other hand, free commutative monoids admit no {nontrivial relations among their presentation}. %needs improvementSince $3u=2v$ in \(M_{(E,w)}\), it follows that \(M_{(E,w)}\) is not free.\end{example}

In the remainder of this section, we describe the structure of weighted graph monoids $M_{(E, \omega)}$ in terms of saturated hereditary subsets of $E$. To do so, we need to recall some useful notions. 

\begin{deff}[{\cite[Definition 2.7]{haznam}}]\label{sathersubsets}
Let $(E, \omega)$ be a  vertex-weighted graph and $H\subseteq E^0$. 

(1) The set $H$ is called {\it hereditary} if for any $e\in E^1$, $s(e)\in H$ implies $r(e)\in H$. 

(2) The set $H$ is {\it saturated} if  whenever $v\in E_{\reg}^0$ with the property that $r(s^{-1}(v))\subseteq H$, then $v\in H$.

(3) The {\it hereditary saturated closure} of $H$, denoted by $\overline{H}$, is the
smallest hereditary and saturated subset of $E^0$ containing $H$.	
\end{deff} 
We denote by $\mathcal{H}_{(E,\omega)}$ the set of all  saturated hereditary subsets of $E^0$.

\begin{example}
 Let $(E, \omega)$ be the following vertex weighted graph
% $$\xymatrix{\bullet^{w}\ar[r]^{e}&
% 	\bullet^{v} \ar@(ul,ur)^f\ar[r]^g& \bullet^{s}&\bullet^{u}\ar[l]_{h}}$$
\[ \begin{tikzpicture}[
    >=Stealth,
    shorten >=1pt,
    every node/.style={font=\small},
    scale=0.95,
    transform shape
]

\node (w) at (-2,0) {$w$};
\node (v) at (0,0) {$v$};
\node (s) at (2,0) {$s$};
\node (u) at (4,0) {$u$};

% edges 
\draw[->]
    (w) -- node[above, transform canvas={xshift=-4pt}] {$(e,2)$}
    (v);
    
\draw[->]
    (v) edge[
        in=50,
        out=130,
        looseness=10
    ]
    node[above] {$(f,2)$}
    (v);

\draw[->]
    (s) -- node[above, transform canvas={xshift=4pt}] {$(g,2)$}
    (v);

\draw[->]
    (u) -- node[above] {$(h,2)$}
    (s);
\end{tikzpicture}
\]

	where all edges have weight $2$.	
Then we have $$\mathcal{H}_{(E,\omega)} =
\big \{\varnothing, \{s, u\}, E^0\big\}.$$ 
\end{example}

Let $(E, \omega)$ be a  vertex-weighted graph and $H\in \mathcal{H}_{(E,\omega)}$. We denote by $(E_H, \omega_r)$ the {\it restriction weighted graph}
\begin{center}
		$E_H^0 := H$, \quad\quad $E_H^1 := \{e\in E^1\mid s(e)\in H\}$
\end{center}	
and the source, range and weight function in $E_H$ are exactly the source, range and weight function in $E$ restricted to $H$.

The following proposition provides that every nonempty saturated hereditary subset $H$ of a vertex-weighted graph $E$ induces a weighted graph submonoid $ M_{(E_H, \omega_r)}$ of $ M_{(E, \omega)}$.

\begin{prop}\label{sandsubmon}
Let $(E, \omega)$ be a  vertex-weighted graph and $H\in \mathcal{H}_{(E,\omega)}$. Then, the map
$\psi_H: M_{(E_H, \omega_r)}\rightarrow M_{(E, \omega)}$, defined by $v \mapsto v$, is an injective homomorphism of monoids. Consequently,
 $ M_{(E_H, \omega_r)}$ is a submonoid of $M_{(E, \omega)}$.	
\end{prop}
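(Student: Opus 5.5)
First I check that $\psi_H$ is well defined. Let $\mathbb F_H\subseteq\mathbb F_E$ be the free commutative monoid on $H$; the inclusion $H\hookrightarrow E^0$ induces a monoid map $\mathbb F_H\to\mathbb F_E$. The defining relations of $M_{(E_H,\omega_r)}$ are $\omega(v)v=\sum_{e\in s_{E_H}^{-1}(v)}r(e)$ for $v\in (E_H)^0_{\reg}$. Because $E_H^1$ consists of all edges with source in $H$, we have $s_{E_H}^{-1}(v)=s_E^{-1}(v)$ for every $v\in H$. Hence $(E_H)^0_{\reg}=H\cap E^0_{\reg}$, and each relation of $M_{(E_H,\omega_r)}$ is literally one of the defining relations of $M_{(E,\omega)}$. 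Therefore the composite $\mathbb F_H\to\mathbb F_E\to M_{(E,\omega)}$ respects the congruence and factors through a monoid homomorphism $\psi_H$ with $v\mapsto v$.

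For injectivity I use the Confluence Lemma~\ref{aralem6} in both graphs. Take $a,b\in\mathbb F_H$ with $\psi_H(a)=\psi_H(b)$. Then there is $c\in\mathbb F_E$ with $a\rightarrow c$ and $b\rightarrow c$ in $\mathbb F_E$. The key observation is that $\mathbb F_H$ is closed under the one-step relation $\rightarrow_1$ of $E$. If $x\in\mathbb F_H$ and $x\rightarrow_1 y$ by letting a vertex $v\in H$ flow, then $y$ replaces $\omega(v)v$ by $\sum_{e\in s^{-1}(v)}r(e)$. Each $r(e)$ lies in $H$ because $H$ is hereditary, so $y\in\mathbb F_H$. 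Moreover, this step is exactly a $\rightarrow_1$ step in $E_H$, since $v$ has the same weight and the same emitted edges in $E_H$. By induction along the chains $a=a_0\rightarrow_1\cdots\rightarrow_1 a_k=c$ and the analogous chain for $b$, we get $c\in\mathbb F_H$ and $a\rightarrow c$, $b\rightarrow c$ in $\mathbb F_H$ with respect to $(E_H,\omega_r)$. Applying the Confluence Lemma to $(E_H,\omega_r)$ then gives $a=b$ in $M_{(E_H,\omega_r)}$.

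Two points need care. First, the Confluence Lemma must apply to $(E_H,\omega_r)$, which requires it to be a vertex-weighted graph. This holds because it is a restriction of $(E,\omega)$: every vertex of $H$ keeps all its emitted edges, so all edges from a regular vertex still have equal weight. It is also row-finite. Second, the case $a=b=0$, or where one side is empty, is covered by the same argument, since $0\rightarrow c$ forces $c=0$.

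The main obstacle is the closure of $\mathbb F_H$ under flows. This is the one place where hereditariness is used; saturation is not needed. Once that closure holds, the "consequently" statement is immediate, since an injective monoid homomorphism identifies $M_{(E_H,\omega_r)}$ with its image.
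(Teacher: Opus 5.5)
Your proof is correct and follows the paper's argument: well-definedness because every defining relation of $M_{(E_H,\omega_r)}$ is already a relation of $M_{(E,\omega)}$, and injectivity via the Confluence Lemma in $(E,\omega)$. The key step is your observation that hereditariness keeps every flow starting in $\mathbb{F}_H$ inside $\mathbb{F}_H$, as a flow of $(E_H,\omega_r)$, so $c\in\mathbb{F}_H$. You are simply spelling out what the paper compresses into ``clearly well-defined'' and ``all the transformations occur in $E_H$''. The final step only uses the trivial direction ($a\rightarrow c$ and $b\rightarrow c$ in $\mathbb{F}_H$ already give $a\sim b$ there), so your check that $(E_H,\omega_r)$ is vertex-weighted is harmless but not actually needed.
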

\begin{proof} 
Clearly the map $\psi_H: M_{(E_H, \omega_r)}\rightarrow M_{(E, \omega)}, v\mapsto v$ is well-defined. 
Let $a, b\in  M_{(E_H, \omega_r)}$ such that $a = b$ in $ M_{(E, \omega)}$. By the Confluence
Lemma~\ref{aralem6}, there exists an element $c\in \mathbb{F}_E$ such that $a \rightarrow c$ and $b\rightarrow c$. Since $H$ is hereditary, all the transformations occur in $E_H$, and so $c\in \mathbb{F}_{E_H}$. This implies that $a = b$ in $ M_{(E_H, \omega_r)}$. Hence, $\psi_H$ is  an injective homomorphism of monoids, thus finishing the proof.
\end{proof}	

Recall (see \cite[Page 131]{lpabook}) that an {\it order-ideal} of a commutative monoid $M$ is a submonoid $I$ of $M$ such that, for any $x, y\in M$, if $x+y \in I$ then $x, y\in I$. An order-ideal may also be described as a submonoid $I$ of $M$ which is hereditary with respect to the canonical preorder $\le$ on $M$: $x\le y$ and $y\in I$ imply $x\in I$, where  the preorder $\le$ on $M$ is defined by
setting $x\le y$ if  $y = x + m$ for some $m\in M$. For each $X\subseteq M$, the order-ideal of $M$ generated by $X$ is the set \[\langle X \rangle :=\big\{m\in M\mid m\le \sum^n_{i=1}x_i,\ n\in \mathbb{N}^+,\ x_i\in X\big\}.\]
The set $\mathcal{L}(M)$ of order-ideals of $M$ forms a complete lattice such that the join of two elements $I, J$ is exactly the order-ideal of $M$ generated by $I +J$.

We are now in a position to establish the second main result of this section describing order-ideals of the graph monoid $ M_{(E, \omega)}$ of a vertex-weighted graph $(E, \omega)$, which is an analogue of the corresponding result for sandpile monoids established in \cite[Theorem 2.10]{haznam1}.

\begin{thm}\label{sat-hered-ord-id-lat}
For any  vertex weighted graph $(E, \omega)$,   the following statements hold:

$(1)$ $0$ and $M_{(E, \omega)}$ are two order-ideals of $M_{(E, \omega)}$;

$(2)$ For all $H\in \mathcal{H}_{(E,\omega)}$, $M_{(E_H, \omega_r)}$ is an order-ideal of $M_{(E, \omega)}$;

$(3)$ For every order-ideal $I$ of $M_{(E, \omega)}$, $I = M_{(E_H, \omega_r)}$, where $H= I\cap E^0\in \mathcal{H}_{(E,\omega)}$;

$(4)$ $\mathcal{H}_{(E,\omega)}\cong \mathcal{L}(M_{(E, \omega)})$ as lattices.
\end{thm}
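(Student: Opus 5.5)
The plan is to argue throughout with the Confluence Lemma~\ref{aralem6} and Corollary~\ref{Conical}, identifying $M_{(E_H,\omega_r)}$ with its image in $M_{(E,\omega)}$ via Proposition~\ref{sandsubmon}.

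Part (1) is immediate. The whole monoid is trivially an order-ideal. For $\{0\}$: if $x+y=0$, conicality (Corollary~\ref{Conical}) gives $x=y=0$.

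For (2), fix $H\in\mathcal{H}_{(E,\omega)}$ and suppose $x+y\in M_{(E_H,\omega_r)}$ with $x,y\in\mathbb F_E$. Then $x+y=a$ in $M_{(E,\omega)}$ for some $a\in\mathbb F_{E_H}$. By the Confluence Lemma there is $c\in\mathbb F_E$ with $x+y\to c$ and $a\to c$. Since $H$ is hereditary, every flow starting from $a$ stays in $\mathbb F_{E_H}$, so $c\in\mathbb F_{E_H}$.

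The key step is a ``no vertex disappears outside $H$'' argument. Take any vertex $u\notin H$ in the support of $x$ and follow the flow $x+y\to c$. Whenever a multiple of $u$ is flowed, we replace $\omega(u)u$ by $\sum_{e\in s^{-1}(u)}r(e)$. Since $H$ is saturated and $u\notin H$, at least one range $r(e)$ lies outside $H$ (if $u$ is regular). If $u$ is a sink, it is never flowed at all. Hence, by induction along the sequence of $\to_1$ steps, the support always meets $E^0\setminus H$ as soon as it does initially. This contradicts $c\in\mathbb F_{E_H}$. Therefore $x,y\in\mathbb F_{E_H}$, and so $x,y\in M_{(E_H,\omega_r)}$. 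This invariant, that the property ``the support meets $E^0\setminus H$'' is preserved under $\to_1$, is the main technical point; I would state it as a small claim and prove it by induction on the number of steps.

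For (3), let $I$ be an order-ideal and put $H=I\cap E^0$, meaning the vertices whose classes lie in $I$.
\begin{itemize}
\item \emph{Hereditary:} if $s(e)=v\in H$, then $\omega(v)v\in I$ and $\omega(v)v=r(e)+\sum_{f\neq e}r(f)$, so $r(e)\in I$.
\item \emph{Saturated:} if $r(s^{-1}(v))\subseteq H$ for regular $v$, then $v\le\omega(v)v=\sum_{e\in s^{-1}(v)} r(e)\in I$, so $v\in H$.
\end{itemize}
Thus $H\in\mathcal{H}_{(E,\omega)}$. Clearly $M_{(E_H,\omega_r)}\subseteq I$, since $I$ is a submonoid containing $H$. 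Conversely, any $x\in I$ is represented by some $\sum k_v v$ with each $k_v\ge1$. Each such $v$ satisfies $v\le x$, so $v\in I$ and hence $v\in H$; therefore $x\in M_{(E_H,\omega_r)}$.

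For (4), define $\Phi\colon H\mapsto M_{(E_H,\omega_r)}$ and $\Psi\colon I\mapsto I\cap E^0$. Part (3) gives $\Phi\Psi=\mathrm{id}$. For $\Psi\Phi=\mathrm{id}$, we need $M_{(E_H,\omega_r)}\cap E^0=H$. If a vertex $u\notin H$ equals some $a\in\mathbb F_{E_H}$ in $M_{(E,\omega)}$, the claim from (2) applied to the flow from $u$ gives a contradiction. Both $\Phi$ and $\Psi$ are inclusion-preserving, so they form an order isomorphism of posets and hence a lattice isomorphism.
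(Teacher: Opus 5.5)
Your proof is correct and follows the paper's argument essentially step by step: conicality for (1); for (2), the Confluence Lemma plus the observation that a vertex outside a saturated hereditary $H$ never flows entirely into $H$; for (3), the same hereditary/saturated check and support argument; and for (4), the mutually inverse maps $H\mapsto M_{(E_H,\omega_r)}$ and $I\mapsto I\cap E^0$. If anything you are more careful than the paper, since you isolate the invariant ``the support meets $E^0\setminus H$'' as a claim proved by induction on $\to_1$-steps, and you use it to justify $M_{(E_H,\omega_r)}\cap E^0=H$, which the paper simply asserts when checking $\psi\phi=\mathrm{id}$.
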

\begin{proof}
(1) By Corollary \ref{Conical}, the zero ideal is an order-ideals of $M_{(E, \omega)}$. Also, $M_{(E, \omega)}$ is obviously an order-ideals of $M_{(E, \omega)}$.

(2) Let $H$ be an element in $\mathcal{H}_{(E,\omega)}$. By Proposition \ref{sandsubmon}, $M_{(E_H, \omega_r)}$ is a submonoid of $M_{(E, \omega)}$. Assume that $x$ any $y$ are elements in $M_{(E, \omega)}$ with $x\le y$ and $y\in M_{(E_H, \omega_r)}$. Then, $y = x +z$ in $M_{(E_H, \omega_r)}$ for some $z\in M_{(E, \omega)}$. By the Confluence Lemma, there exists an element $c\in \mathbb{F}_E$ such that $y \rightarrow c$ and $x+z\rightarrow c$. Since $y\in M_{(E_H, \omega_r)}$, $\text{supp}(y)\subseteq E^0_H$, and so $\text{supp}(c)\subseteq E^0_H$. Assume that $x\notin M_{(E_H, \omega_r)}$. Then, there exists a vertex $v\in \text{supp}(x)$ such that $v\notin E^0_H$. Since $H$ is saturated and hereditary, any possible transformations of $v$ or its multiple would give a vertex  which is not in $E_H$ and subsequently any further transformations always contain a vertex  which is not in $E_H$. This shows that $x + y$ cannot be transformed to $c$, a contradiction, and so $x\in M_{(E_H, \omega_r)}$. Thus $M_{(E_H, \omega_r)}$ is an order-ideal of $M_{(E, \omega)}$.

(3) Let $I$ be an order-ideal of $M_{(E, \omega)}$ and $H := I \cap E^0$. We claim that $H\in \mathcal{H}_{(E,\omega)}$. Indeed, let $e\in E^1$ with $v:=s(e)\in H$. We then have $v\in I$ and $\sum_{f\in s^{-1}(v)}r(f)=\omega(v) v \in I$, and so $r(f)\in I$ for all $f\in s^{-1}(v)$ (since $I$ is an order-ideal of $M_{(E, \omega)}$); in particular, $r(e)\in I$. Hence, $H$ is hereditary.
Now let $v$ be a regular vertex in $E$ with $r(s^{-1}(v))\subseteq I$. Then, $\omega(v)v=\sum_{f\in s^{-1}(v)}r(f) \in I$, and so $v\in I$. Therefore, $H$ is saturated, showing the claim.

We next prove that $I= M_{(E_H, \omega_r)}$. Since $H\subseteq I$ and $I$ is a submonoid of $M_{(E, \omega)}$, we have $M_{(E_H, \omega_r)}\subseteq I$. Let $x$ be a nonzero element in $I$. Since $I$ is an order-ideal of $M_{(E, \omega)}$, we must have $\text{supp}(x)\subseteq I$, and so $\text{supp}(x)\subseteq H$. This implies that $x\in M_{(E_H, \omega_r)}$, that means, $I\subseteq M_{(E_H, \omega_r)}$.

(4) Let $$\phi:  \mathcal{H}_{(E,\omega)}\longrightarrow \mathcal{L}(M_{(E, \omega)})$$ be the map defined by: $$H\longmapsto\phi(H)= M_{(E_H, \omega_r)},$$ and let $$\psi: \mathcal{L}(M_{(E, \omega)})\longrightarrow \mathcal{H}_{(E,\omega)}$$ be the map defined by: $$I\longmapsto \psi(I) = I\cap E^0.$$ It is obvious that both maps are order-preserving. Moreover, we have $\psi(\phi(H)) = \psi(M_{(E_H, \omega_r)}) = H$ for every $H\in \mathcal{H}_{(E,\omega)}$, and hence $\psi\phi = id_{\mathcal{H}_{(E,\omega)}}.$ Conversely, by item (3), we obtain that $\phi\psi = id_{\mathcal{L}(M_{(E, \omega)})}.$ Therefore, $\phi$ and $\psi$ are order-preserving mutually inverse maps. Consequently, we obtain that
$\mathcal{H}{(E,\omega)}\cong\mathcal{L}(M{(E,\omega)})$
as lattices, thus finishing the proof.
\end{proof}

For clarification, we consider the following example.

\begin{example}
 Let $(E, \omega)$ be the following vertex weighted graph
% $$\xymatrix{\bullet^{w}\ar[r]^{e}&
% 	\bullet^{v} \ar@(ul,ur)^f\ar[r]^g& \bullet^{s}&\bullet^{u}\ar[l]_{h}}$$
\[ \begin{tikzpicture}[
    >=Stealth,
    shorten >=1pt,
    every node/.style={font=\small},
    scale=0.95,
    transform shape
]

\node (w) at (-2,0) {$w$};
\node (v) at (0,0) {$v$};
\node (s) at (2,0) {$s$};
\node (u) at (4,0) {$u$};

% edges 
\draw[->]
    (w) -- node[above, transform canvas={xshift=-4pt}] {$(e,2)$}
    (v);
    
\draw[->]
    (v) edge[
        in=50,
        out=130,
        looseness=10
    ]
    node[above] {$(f,2)$}
    (v);

\draw[->]
    (s) -- node[above, transform canvas={xshift=4pt}] {$(g,2)$}
    (v);

\draw[->]
    (u) -- node[above] {$(h,2)$}
    (s);
\end{tikzpicture}
\]
where all edges have weight $2$.	
Then we have
\begin{center}
 $\mathcal{H}_{(E,\omega)} =\{\varnothing, \{s, u\}, E^0\}$ and  $\mathcal{L}(M_{(E, \omega)}) = \{0, \langle s, u \rangle,  M_{(E, \omega)}\}$,		
\end{center}
and so we have $\mathcal{H}_{(E,\omega)}\cong \mathcal{L}(M_{(E, \omega)})$ as lattices via the map: 
\begin{center}
$\varnothing \longmapsto 0$, $\{s, u\}\longmapsto \langle s, u \rangle$ and $E^0  \longmapsto M_{(E, \omega)}$.    
\end{center}
\end{example}

Next we define the ``talented'' version of the weighted graph monoids~(\ref{graphmon}). 

\begin{deff}\label{tal-wei-mon}
Let $(E,w)$ be a row-finite vertex-weighted graph. 
We define the  {\it talented monoid} of $(E,w)$ to be 
\begin{equation}\label{talmon}
T_{(E,w)} \  \ := \\ \Big \langle v(i) \mid v\in E^0,\ i \in \mathbb Z \Big \rangle  \  \Big / \ \Big \langle w(v) v(i)= \sum_{e \in s^{-1}(v)} r(e)(i+1) \, \big | \, v\in E_{\reg}^0 , i \in \mathbb{Z} \Big \rangle,
\end{equation}
where $\Big \langle v(i) \mid v\in E^0,\ i \in \mathbb Z \Big \rangle$ is the free commutative monoid on the set $\{v(i)\mid v\in E^0,\ i \in \mathbb{Z}\}$.

We should mention that for a directed row-finite  graph $E$, i.e., the weight is the constant $1$, the talented monoid~(\ref{talmon}) reduces to the commutative monoid $T_E$ originally defined in \cite{hazli}.  

A {\it $\Gamma$-monoid}, where $\Gamma$ is an abelian group, consists of a commutative monoid $M$ equipped with a group action of $\Gamma$ by monoid homomorphisms. The image of an element $m \in M$ under the action of a group element $\gamma \in \Gamma$ is denoted by ${ }^{\gamma} m$. A monoid homomorphism $\phi: M\longrightarrow M'$ is called a {\it $\Gamma$-homomorphism} if $\phi({ }^{\gamma} m) = { }^{\gamma} \phi(m)$ for all $m\in M$ and $\gamma \in \Gamma$.

There is a well-defined action of $\mathbb Z$ on the monoid $T_{(E,w)}$ defined on the generators by ${}^n v(i):=v(i+n)$, where $i, n \in \mathbb Z$, and extends to all of the monoid. 
Throughout we write $v:=v(0)$.  
We denote $1_E:=\sum_{v\in E^0}v$, a distinguished order unit element both in $M_{(E,w)}$ and $T_{(E,w)}$
\end{deff}

For clarification, we illustrate Definition \ref{tal-wei-mon} by presenting the following examples.
We first recall the notion of refinement for monoids. 
A commutative monoid $M$ is called a \emph{refinement monoid} if for any equality
\[
a_1 + \cdots + a_m \;=\; b_1 + \cdots + b_n,
\]
with $a_i, b_j \in M$, there exist elements $x_{ij} \in M$ such that
\[
a_i = \sum_{j=1}^n x_{ij} \quad \text{for all } i=1,\dots,m,
\]
and
\[
b_j = \sum_{i=1}^m x_{ij} \quad \text{for all } j=1,\dots,n.
\]

We shall show that only the talented monoids of unweighted Leavitt path algebras are refinement (Proposition~\ref{cortal}).

\begin{example}\label{hfgfyy}
Let $E$ be a graph with two loops of weight one. Then 
\[T_E=\frac{\langle v(i) \mid i\in \mathbb Z\rangle}{\langle v(i)=2v(i+1)\rangle}.\]
One can see that $T_E\cong \mathbb N[1/2]$, with the action ${}^1 n = 1/2 n$, where $n\in \mathbb N[1/2]$.

On the other hand, for the weighted graph $F$ consisting of two loops of weight 2, we have 
\[T_{(F,w)}=\frac{\langle v(i) \mid i\in \mathbb Z\rangle}{\langle 2v(i)=2v(i+1)\rangle}.\]
We obtain that $T_{(F,w)}$ is cancellative but not a refinement monoid. The cancellative property follows from Proposition~\ref{goodviasm}(2) below. Assume that $T_{(F,w)}$ is a refinement monoid. Since $2v(0) = 2v(1)$, there exist elements $a_{ij}\in T_{(F, w)}$, $1\le i, j\le 2$, such that 
$$v(0) = a_{11} + a_{12} =  a_{21} + a_{22} \text{ and } v(1) =  a_{11} + a_{21} =  a_{12} + a_{22}$$ in $T_{(F, w)}$. By Confluence Lemma \ref{aralem6}, we have $ a_{i1} + a_{i2}\rightarrow v(0)$ for all $i$. It follows that for each $i$, either $a_{i1}=0$ and $a_{i2}=v(0)$, or $a_{i2}=0$ and $a_{i1}=v(0)$. Without loss of generality, we can assume that $a_{11} = 0$ and $a_{12}=v(0)$. We then have $v(1) = v(0) + a_{22}$ in $T_{(F, w)}$. By Confluence Lemma \ref{aralem6}, both $v(1)$ and $v(0) + a_{22}$ flow to
the same element in the free monoid, and so $v(0) + a_{22}\rightarrow v(1)$. This implies that $a_{22} \rightarrow nv(0)$ for some $n\ge 1$. Therefore, $$v(1) =v(0) + a_{22} = (n+1)v(0) = 2v(1) + (n-1)v(0)$$ in $T_{(F, w)}$. Since $T_{(F, w)}$ is cancellative, $v(1) + (n-1)v(0) = 0$, and hence $v(1) =0$, a contradiction. Thus $T_{(F,w)}$ is not a refinement monoid.
\end{example}

\begin{example}\label{ggfear}
    
Consider the graph $E$, where all untagged edges are assigned weight one. 

\[
\begin{tikzpicture}[
    >=Stealth,
    shorten >=1pt,
    every node/.style={font=\small},
    scale=0.95,
    transform shape
]

\node (v) at (0,0) {$v$};
\node (w) at (2,1) {$w$};
\node (z) at (2,-1) {$z$};

% edges between v and w
\draw[->] 
    (v) to[bend left=40] 
    node[midway, above] {$(e,2)$} 
    (w);

\draw[->] 
    (w) to[bend left=20] 
    (v);

% edges between v and z
\draw[->] 
    (v) to[bend right=40] 
    node[midway, below] {$(f,2)$} 
    (z);

\draw[->] 
    (z) to[bend right=20] 
    (v);

\end{tikzpicture}
\]
The commutative monoid $M_{(E,w)}$ is generated by symbols $v,w,z$ subject to the weighted graph relations $$2v=w+z,\ w=v,\ z=v.$$ One can see that the generators $w,z$ are redundant and the relation reduces to $2v=2v$ which is also redundant. Thus $M_{(E,w)}\cong \mathbb N$. We shall see that the weighted Leavitt path algebra $L_\K(E,w)$ is not isomorphic to an unweighted Leavitt path algebra (as the graph $(E,w)$ does not satisfy Preusser's conditions~\ref{defLPA}).  This example shows that the weighted graph monoid cannot distinguish the weighted Leavitt path algebra $L_K(E,w)$ from unweighted Leavitt path algebras. Although $L_K(E,w)$ is not isomorphic to any unweighted Leavitt path algebra, its graph monoid is $\mathbb{N}$, which also arises as the graph monoid of numerous unweighted Leavitt path algebras.

Now consider the talented monoid $T_{(E,w)}$. It is easy to see that 
\[T_{(E,w)}=\frac{\langle v(i) \mid i\in \mathbb Z\rangle}{\langle 2v(i)=2v(i+2)\rangle},\] which is not refinement (by Example~\ref{hfgfyy}). Consequently, $T_{(E,w)}$ cannot be isomorphic to a talented monoid of an ordinary graph (as they are refinement). We shall see in \S\ref{distin} that the 
talented monoid of  weighted graphs distinguishes the class of weighted Leavitt path algebras from the unweighted ones. 
\end{example}

In the remainder of this section, we heavily use the concept of the covering graph of a weighted graph. 

\begin{deff}\label{deff:covering}
The \emph{covering graph}  $(\overline E, \overline w)$  of 
a weighted graph $(E,w)$ (also denoted by  $E\times_1 \mathbb Z$) 
is a weighted graph defined  by
\begin{gather*}
    \overline E^0 = \big\{v^{(n)} \mid v \in E^0 \text{ and } n \in \Z \big\},\qquad
    \overline E^1 = \big\{e^{(n)} \mid e\in E^1 \text{ and } n\in \Z \big\},\\
    s(e^{(n)}) = s(e)^{(n)}, \qquad  r(e^{(n)}) = r(e)^{(n+1)} \qquad\text{ and } \qquad w(e^{(n)})=w(e).
\end{gather*}    
\end{deff}

For clarification, we illustrate Definition \ref{deff:covering} by presenting the following examples.

\begin{example}    
Consider the graph $E$ from Example~\ref{ggfear}, where all untagged edges are assigned weight one. 
\[
\begin{tikzpicture}[
    >=Stealth,
    shorten >=1pt,
    every node/.style={font=\small},
    scale=0.95,
    transform shape
]

\node (v) at (0,0) {$v$};
\node (w) at (2,1) {$w$};
\node (z) at (2,-1) {$z$};

% edges between v and w
\draw[->] 
    (v) to[bend left=40] 
    node[midway, above] {$(e,2)$} 
    (w);

\draw[->] 
    (w) to[bend left=20] 
    (v);

% edges between v and z
\draw[->] 
    (v) to[bend right=40] 
    node[midway, below] {$(f,2)$} 
    (z);

\draw[->] 
    (z) to[bend right=20] 
    (v);

\end{tikzpicture}
\]
Then the covering graph $\overline E$ takes the following form, with edges emiting from $v^{(i)}$ has weight $2$ and others have weight $1$. 

\[
\begin{tikzpicture}[
    >=Stealth,
    shorten >=1pt,
    every node/.style={font=\small},
    scale=1,
    transform shape
]

%--------------------------------
% vertices
%--------------------------------

\node (w0) at (0,1) {$w^{(0)}$};
\node (w1) at (2,1) {$w^{(1)}$};
\node (w2) at (4,1) {$w^{(2)}$};
\node (w3) at (6,1) {$w^{(3)}$};

\node (v0) at (0,0) {$v^{(0)}$};
\node (v1) at (2,0) {$v^{(1)}$};
\node (v2) at (4,0) {$v^{(2)}$};
\node (v3) at (6,0) {$v^{(3)}$};

\node (z0) at (0,-1) {$z^{(0)}$};
\node (z1) at (2,-1) {$z^{(1)}$};
\node (z2) at (4,-1) {$z^{(2)}$};
\node (z3) at (6,-1) {$z^{(3)}$};

%--------------------------------
% edges v_i -> w_{i+1}
%--------------------------------

\draw[->] (v0) -- (w1);
\draw[->] (v1) -- (w2);
\draw[->] (v2) -- (w3);

%--------------------------------
% edges w_i -> v_{i+1}
%--------------------------------

\draw[->] (w0) -- (v1);
\draw[->] (w1) -- (v2);
\draw[->] (w2) -- (v3);

%--------------------------------
% edges v_i -> z_{i+1}
%--------------------------------

\draw[->] (v0) --  (z1);
\draw[->] (v1) -- (z2);
\draw[->] (v2) -- (z3);

%--------------------------------
% edges z_i -> v_{i+1}
%--------------------------------

\draw[->] (z0) -- (v1);
\draw[->] (z1) -- (v2);
\draw[->] (z2) -- (v3);

%--------------------------------
% dots
%--------------------------------

\node at (-1.2,1) {$\boldsymbol{\cdots}$};
\node at (-1.2,0) {$\boldsymbol{\cdots}$};
\node at (-1.2,-1) {$\boldsymbol{\cdots}$};

\node at (7.2,1) {$\boldsymbol{\cdots}$};
\node at (7.2,0) {$\boldsymbol{\cdots}$};
\node at (7.2,-1) {$\boldsymbol{\cdots}$};

\end{tikzpicture}
\]  
\end{example}

%The following lemma establishes the relationship between the talented monoid \(T_{(E,w)}\) and the covering graph \((\overline{E},\overline{w})\) of a vertex-weighted graph \((E,w)\).

%\begin{lemma} Let \((E,w)\) be a vertex-weighted graph. Then
%\[
%T_{(E,w)} \cong M_{(\overline{E},\overline{w})}.
%\]
%\end{lemma}

\begin{example}
Consider the weighted graph $E$ with $n+k$ loops each of weight $n$.

\begin{equation}\label{weightedlpaex}
    \begin{tikzpicture}[scale=2]
\node[circle,fill=black,inner sep=1.5pt] (v) at (0,0) {};

% loops
\draw[->] (v) edge[out=130,in=60,loop] 
node[above] {$(y_1, n)$} ();

\draw[->] (v) edge[out=40,in=-30,loop] 
node[right] {($y{_2}, n)$} ();

\draw[dotted, ->] (v) 
edge[out=-50,in=-120,loop]();

\draw[->] (v) edge[out=140,in=210,loop] 
node[left] {$(y_{n+k},n)$} ();
\end{tikzpicture}
\end{equation}
It is easy to see that $M_{(E,w)}$ is the monogenic monoid 
\[M_{(n,n+k)}=\big\langle v \mid nv=(n+k)v \big \rangle.\]
The weighted covering graph $E\times \mathbb Z$ takes the form
\[
\begin{tikzpicture}[
    >=Stealth,
    shorten >=1pt,
    every node/.style={font=\small},
    scale=1,
    transform shape
]

\tikzset{
  multiedge/.style={->, shorten >=1pt, shorten <=1pt}
}

%--------------------------------
% vertices
%--------------------------------

\node (v0) at (0,1) {$v^{(0)}$};
\node (v1) at (2,1) {$v^{(1)}$};
\node (v2) at (4,1) {$v^{(2)}$};
\node (v3) at (6,1) {$v^{(3)}$};

%--------------------------------
% edges v_i -> w_{i+1}
%--------------------------------

%\draw[->] (v0) -- (v1);
\draw[->, transform canvas={yshift=5pt}] (v0) -- node[midway,above] {$y_1^{(0)}$} (v1);
\node () at (1,1.05) {$\scalebox{.6}{\boldsymbol{\vdots}}$};
\draw[->, transform canvas={yshift=-5pt}] (v0) -- node[midway,below] {$y_{n+k}^{(0)}$} (v1);

%\draw[->] (v1) -- (v2);
\draw[->, transform canvas={yshift=5pt}] (v1) -- node[midway,above] {$y_1^{(1)}$} (v2);
\node () at (3,1.05) {$\scalebox{.6}{\boldsymbol{\vdots}}$};
\draw[->, transform canvas={yshift=-5pt}] (v1) -- node[midway,below] {$y_{n+k}^{(1)}$} (v2);

%\draw[->] (v2) -- (v3);
\draw[->, transform canvas={yshift=5pt}] (v2) -- node[midway,above] {$y_1^{(2)}$} (v3);
\node () at (5,1.05) {$\scalebox{.6}{\boldsymbol{\vdots}}$};
\draw[->, transform canvas={yshift=-5pt}] (v2) -- node[midway,below] {$y_{n+k}^{(2)}$} (v3);

\node at (-1.2,1) {$\boldsymbol{\cdots}$};

\node at (7.2,1) {$\boldsymbol{\cdots}$};

\end{tikzpicture}
\]
%$$\mathcal V^{\gr}(L_\K(n,n+k))\cong \mathcal V^{\gr}(\K)/\langle [\K^n]=[\K^{n+k}(-1)]\rangle.$$ 
where the edges have weight $n$. We then have 
\[T_{(E,w)}=\big \langle v(i), i\in \mathbb Z\mid  (n+k)v(i)=nv(i+1)\big \rangle.\]
Define the $\mathbb Z$-monoid homomorphism 
\begin{align*}
\phi: T_E &\longrightarrow
\left\langle
\left(\frac{n+k}{n}\right)^i \mid  i\in\mathbb Z
\right\rangle\subseteq \mathbb Q^{+}\\
v(i) &\longmapsto \left(\frac{n+k}{n}\right)^i,
\end{align*}
with the action of $\mathbb Z$ defined as follows: 
\[{}^1x=\left(\frac{n+k}{n}\right) x.\]
When $(n,k)=1$, note that the smallest $p$ and $q$ such that 
$p(\frac{n+k}{n})^i=q (\frac{n+k}{n})^{i+1}$ are $n+k$ and $n$, respectively. It then  follows that $\phi$ is an isomorphism. 

\end{example}

Let $M$ be a $\Gamma$-monoid. A {\it $\Gamma$-ordered ideal} of $M$ is a submonoid $I$ of $M$ which is closed under the action of $\Gamma$ and  and it is hereditary in the sense that $x \le y$ and $y \in I$ imply $x\in I$. 

We end this section by 
giving connections between the graph monoid $M_{(E,w)}$ of a vertex-weighted graph $(E, w)$ and the talented monoid $T_{(E,w)}$. 

\begin{prop}\label{goodviasm}
Let $(E,w)$ and $(F,w)$ be vertex-weighted graphs. Then the following statements hold:
    \begin{enumerate}[\upshape(1)]
\item There exists a $\mathbb{Z}$-isomorphism   $\phi: T_{(E, w)}\longrightarrow M_{(\overline E, \overline w)}$ such that $\phi(v(n)) = v^{(n)}$ for all $v\in E^0$ and $n\in \mathbb{Z}$.

\item The talented monoid $T_{(E,w)}$ is cancellative.

\item  A $\mathbb Z$-homomorphism $\phi:T_{(E,w)}\longrightarrow T_{(F,w)}$ induces a homomorphism  $\overline{\phi}: M_{(E,w)}\longrightarrow  M_{(F,w)}$. 

\item Furthermore, if $\phi$ in \textnormal{(3)} is an isomorphism, so it $\overline{\phi}$.

\item There is a lattice isomorphism between the lattices of all order ideals of $M_E$ and $M_{(E,w)}$, and the lattice of all $\mathbb Z$-order ideals of $T_{(E,w)}$.
 \end{enumerate} 
 \end{prop}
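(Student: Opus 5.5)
Items (1), (2) and (5) all go through the covering graph $(\overline E,\overline w)$; items (3) and (4) go through a coinvariant description of $M_{(E,w)}$.

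\emph{Item (1).} I define $\phi$ on the free commutative monoid by $v(n)\mapsto v^{(n)}$ and check it respects the defining relations. The relation $w(v)v(i)=\sum_{e\in s^{-1}(v)}r(e)(i+1)$ goes to
$$w(v^{(i)})v^{(i)}=\sum_{e^{(i)}\in s^{-1}(v^{(i)})} r(e^{(i)}).$$
This holds because $s^{-1}(v^{(i)})=\{e^{(i)}\mid e\in s^{-1}(v)\}$, $r(e^{(i)})=r(e)^{(i+1)}$ and $\overline w(e^{(i)})=w(e)$. Regular vertices of $\overline E$ are exactly the $v^{(i)}$ with $v\in E^0_{\reg}$. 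So the relations of $T_{(E,w)}$ and of $M_{(\overline E,\overline w)}$ correspond bijectively. The assignment $v^{(n)}\mapsto v(n)$ then gives an inverse homomorphism. Compatibility with the $\mathbb Z$-actions is immediate: the action on $M_{(\overline E,\overline w)}$ is the shift $v^{(n)}\mapsto v^{(n+1)}$, which is well defined because the relations are shift-invariant.

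\emph{Item (2).} By (1) it suffices that $M_{(\overline E,\overline w)}$ is cancellative. The covering graph is row-finite and vertex-weighted, since all edges out of $v^{(n)}$ have weight $w(v)$. It is acyclic, since every edge raises the level index by one, so no closed path exists. Theorem~\ref{prop:cancel-gmonoid} then applies directly.

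\emph{Items (3) and (4).} Let $\pi_E:T_{(E,w)}\to M_{(E,w)}$ be given by $v(i)\mapsto v$. It is well defined because relations map to relations, it is surjective, and it satisfies $\pi_E({}^1x)=\pi_E(x)$. The key claim is that $\pi_E$ is universal among monoid homomorphisms $f$ out of $T_{(E,w)}$ with $f\circ{}^1=f$. Given such an $f$, setting $g(v):=f(v(0))$ gives $f(v(i))=g(v)$ for all $i$. The relation $w(v)v(0)=\sum r(e)(1)$ then shows that $g$ respects the relations of $M_{(E,w)}$, so $f=g\circ\pi_E$.

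For a $\mathbb Z$-homomorphism $\phi$, the map $\pi_F\circ\phi$ is shift-invariant, so it factors uniquely as $\overline\phi\circ\pi_E$. Uniqueness makes the construction functorial: $\overline{\psi\phi}=\overline\psi\,\overline\phi$ and $\overline{\mathrm{id}}=\mathrm{id}$. Item (4) follows by applying this to $\phi^{-1}$. I expect the main obstacle to be precisely this point, namely verifying that $M_{(E,w)}$ really is the coinvariant quotient, so that $\overline\phi$ is well defined and not merely defined on generators.

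\emph{Item (5).} I read $M_E$ in the statement as $M_{(E,w)}$. By Theorem~\ref{sat-hered-ord-id-lat}(4), $\mathcal L(M_{(E,w)})\cong\mathcal H_{(E,w)}$, so it remains to identify $\mathcal H_{(E,w)}$ with the lattice of $\mathbb Z$-order ideals of $T_{(E,w)}$.

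Via (1), a $\mathbb Z$-order ideal of $T_{(E,w)}$ is exactly a shift-invariant order ideal of $M_{(\overline E,\overline w)}$. By Theorem~\ref{sat-hered-ord-id-lat}(3), such an ideal is $M_{(\overline E_K,\overline w_r)}$ with $K=I\cap\overline E^0$ saturated hereditary, and $K$ is shift-invariant because $I$ is.

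I then use the maps $H\mapsto\overline H:=\{v^{(n)}\mid v\in H,\ n\in\mathbb Z\}$ and $K\mapsto\{v\mid v^{(0)}\in K\}$. Hereditarity and saturation transfer in both directions because $s^{-1}(v^{(n)})=\{e^{(n)}\mid e\in s^{-1}(v)\}$ and ranges just shift level. These maps are mutually inverse, order-preserving bijections between $\mathcal H_{(E,w)}$ and the shift-invariant elements of $\mathcal H_{(\overline E,\overline w)}$, which gives the lattice isomorphism.
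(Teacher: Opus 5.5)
Your argument is correct. For items (1)--(4) it is the paper's argument. The paper also treats (1) as a direct check of relations, and derives (2) from (1), the acyclicity of $(\overline E,\overline w)$ and Theorem~\ref{prop:cancel-gmonoid}. It obtains (3) from the identification $M_{(E,w)}\cong T_{(E,w)}/\langle v(i)=v(i+1)\rangle$. Your universal property of $\pi_E$ among shift-invariant maps is exactly this identification made explicit. Your functoriality argument is a cleaner justification of (4) than the paper's ``follows from the form of the maps''.

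The genuine difference is in (5).
\begin{itemize}
\item The paper says only that an argument analogous to Theorem~\ref{sat-hered-ord-id-lat}, i.e.\ rerunning the Confluence-Lemma argument inside each monoid, matches each lattice with the saturated hereditary subsets of $E^0$.
\item You instead transport along the $\mathbb Z$-isomorphism of (1) and apply Theorem~\ref{sat-hered-ord-id-lat} as a black box to the infinite, row-finite, acyclic covering graph. You then identify the shift-invariant elements of $\mathcal H_{(\overline E,\overline w)}$ with $\mathcal H_{(E,w)}$ by lifting level-wise.
\end{itemize}
Your route gives an actual proof with no ``analogous'' step, at the cost of some bookkeeping on shift-invariant subsets, which you do correctly. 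You should rename your lift $\overline H$, since the paper uses $\overline H$ for the hereditary saturated closure.

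One correction: $M_E$ in (5) is not a typo for $M_{(E,w)}$. It is the unweighted graph monoid of $E$, with all weights set to $1$, and the paper's proof refers to ``each of these monoids''. So, as written, you have not addressed the claim about $M_E$. The repair is one line. Definition~\ref{sathersubsets} refers only to sources, ranges and regularity, never to $w$, so $\mathcal H_{(E,w)}=\mathcal H_{(E,1)}$. Applying Theorem~\ref{sat-hered-ord-id-lat}(4) to the constant weight $1$ then gives $\mathcal L(M_E)\cong\mathcal H_{(E,w)}$ as well.
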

\begin{proof}

(1) It is a straightforward check and is left to the reader.

(2) It is obvious that the covering graph $(\overline E, \overline w)$ is acyclic. By item (1), $T_{(E, w)}\cong M_{(\overline E, \overline w)}$ as $\mathbb{Z}$-monoids. Then, by Theorem \ref{prop:cancel-gmonoid}, we immediately obtain that $T_{(E, w)}$ is cancellative.

(3) First observe that there is a monoid isomorphism 
\[
M_{(E,w)} \longrightarrow T_{(E,w)}/ \langle v(i)=v(i+1) \mid v\in E^0, i\in \mathbb Z\rangle.\]
Let $\phi:T_{(E,w)}\rightarrow T_{(F,w)}$ be the $\mathbb Z$-monoid homomorphism. Since $\phi$ preserves the action of $\mathbb Z$, it induces the homomorphism 
\[
T_{(E,w)}/ \langle v(i)=v(i+1) \mid v\in E^0, i\in \mathbb Z\rangle
\longrightarrow T_{(F,w)}/ \langle v(i)=v(i+1) \mid v\in F^0, i\in \mathbb Z\rangle.\]
Thus $\overline \phi : M_{(E,w)}\rightarrow M_{(F,w)}$ follows.  The rest is immediate.

(4) This follows from the form of the maps $\phi$.

(5) A proof analogous to that of Theorem~\ref{sat-hered-ord-id-lat} shows that there is a lattice isomorphism between the lattice of all order ideals of each of these monoids and the lattice of all hereditary and saturated subsets of $E^0$.
\end{proof}

\section{Weighted Leavitt path algebras and the graded Grothendieck group \texorpdfstring{$K_0^{\mathrm{gr}}$}{K0gr}}\label{distin}
The aim of this section is to compute the graded Grothendieck group of the Leavitt path algebra of a vertex-weighted graph in terms of its talented monoid (Corollary \ref{cor:smashprod}), and show that the graded Grothendieck group $K^{\gr}_0$ distinguishes the class of vertex-weighted Leavitt path algebras from the unweighted class (Theorem \ref{mainresult-sec3}), as well as prove that every order preserving $\mathbb{Z}$-module isomorphism between $K_0^{\gr}(L_{\K}(E, w))$ and $K_0^{\gr}(L_{\K}(F, w))$ provides an isomorphism between the semilattices of all vertex-generated ideals of $L_{\K}(E, w)$ and $L_{\K}(F, w)$ (Theorem \ref{mainesult-sec3.2}).
\medskip

We begin this section by briefly recalling the notion of weighted Leavitt path algebras, which are algebras associated with weighted graphs. We refer the reader to \cite{H, Pre}  for a detailed analysis of these algebras. 
\begin{deff}\label{weighteddef}
Let $(E,w)$ be a weighted graph and $\K$ a field.  The free $\K$-algebra generated by $\{v,e_i,e_i^*\mid v\in E^0, e\in E^1, 1\leq i\leq w(e)\}$ subject to relations
\begin{enumerate}[(i)]
\item $uv=\delta_{uv}u,  \text{ where } u,v\in E^0$,
\medskip
\item $s(e)e_i=e_i=e_ir(e),~r(e)e_i^*=e_i^*=e_i^*s(e),  \text{ where } e\in E^1, 1\leq i\leq w(e)$,
\medskip
\item 
$\sum_{e\in s^{-1}(v)}e_ie_j^*= \delta_{ij}v, \text{ where } v\in E_{\reg}^0 \text{ and } 1\leq i, j\leq w(v)$, 
\medskip 
\item $\sum_{1\leq i\leq w(v)}e_i^*f_i= \delta_{ef}r(e), \text{ where } v\in E_{\reg}^0 \text{ and } e,f\in s^{-1}(v)$,
\end{enumerate}
is called the {\it weighted Leavitt path algebra} of $(E,w)$, and denoted by $L_\K(E,w)$, where $\delta$ is the Kronecker delta. In relations (iii) and (iv) we set $e_i$ and $e_i^*$ to be zero whenever $i > w(e)$.    
\end{deff}

Note that if the weight of each edge is $1$, then $L_\K(E,w)$ reduces to the usual Leavitt path algebra $L_\K(E)$  of the graph $E$.   Also, if $E$ is the empty graph then $L_\K(E,w)$ is defined to be the zero ring.  

The motivation for defining weighted Leavitt path algebras comes from the fact that ordinary Leavitt path algebras do not encompass Leavitt algebras of the form $L_\K(n,n+k)$. However, starting from the weighted graphs in (\ref{weightedlpaex}), the associated weighted Leavitt path algebras are precisely $L_\K(n,n+k)$.

For a vertex-weighted graph $(E,w)$, one can consider a $\mathbb Z$-grading on
$L_\K(E,w)$ by assigning
\[
\deg(v)=0,\qquad
\deg(e_i)=1,\qquad
\deg(e_i^*)=-1,
\]
for $v\in E^0$, $e\in E^1$, and $1\le i\le w(e)$. If all edges have weight
$1$, then this grading coincides with the standard grading on an unweighted
Leavitt path algebra. For an approach to endowing weighted Leavitt path
algebras with a grading by an arbitrary group, we refer the reader to
\cite[Lemma~10.2.3]{Pre}.

Let $R$ be a not necessarily unital ring. Recall that a left $R$-module $M$ is called {\it unital} if $RM = M$. We denote by $R$-$\mathrm{Mod}$  the category of unital left $R$-modules. Furthermore, we denote by $R$-$\mathrm{Mod}_{\mathrm{proj}}$  the full subcategory of $R$-$\mathrm{Mod}$ whose objects are the projective objects of $R$-$\mathrm{Mod}$ that are finitely generated as a left $R$-module. If $R$ has local units, we define
\[ \V(R) = \{ [P] ~| ~ P \in  R\text{-}\mathrm{Mod}_{\mathrm{proj}}   \} \]
where $[P]$ denotes the isomorphism class of $P$ as a left $R$-module. $\V(R)$ becomes an abelian monoid by defining $[P]+[Q] = [P \oplus Q]$. It is well-known that $\V$ is a functor that commutes with direct limits. There is a different, but equivalent definition of $\V$ using idempotent matrices over $R$, cf. \cite[Subsection 4A]{ara-hazrat-li-sims}. 

The following result was introduced by the second author in \cite{H} and later generalised to arbitrary weighted graphs by R. Preusser in \cite{P}.

\begin{thm}[{\cite[Theorem 14]{P}}]\label{VLisoM}
    $\V \circ L \cong M$. Moreover, if $(E,w)$ is
a finite weighted graph and $\K$ is a field, then $L_{\K}(E,w)$ is left and right hereditary.
\end{thm}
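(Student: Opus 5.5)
The statement has two parts: the natural isomorphism $\V\circ L\cong M$, and left/right hereditarity of $L_\K(E,w)$ for finite $(E,w)$. For both I would follow Bergman's machinery \cite{bergman74}, as in \cite[Theorem 5.21]{H}. The idea is to realise $L_\K(E,w)$ as an iterated universal construction over a base ring whose $\V$-monoid and global dimension are known. Bergman's theorems then track how $\V$ and hereditarity change at each step.

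First assume $(E,w)$ is finite. Take the base ring $R_0=\prod_{v\in E^0}\K$, which is semisimple and in particular hereditary. Its monoid is $\V(R_0)=\mathbb F_E$, the free commutative monoid on the idempotents $v$. For each regular vertex $v$, consider the finitely generated projective modules
\begin{center}
$P_v=(R_0v)^{w(v)}$ \quad and \quad $Q_v=\bigoplus_{e\in s^{-1}(v)}R_0r(e)$.
\end{center}
Adjoining a universal isomorphism $P_v\cong Q_v$ means adjoining generators $e_i,e_i^*$ for $1\le i\le w(v)$, arranged as a $w(v)\times|s^{-1}(v)|$ matrix and its inverse. This produces exactly relations (i)--(iv) of Definition~\ref{weighteddef}. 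Performing this for all regular vertices (simultaneously, or one at a time) gives $L_\K(E,w)$.

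Bergman's theorem \cite[Theorem 5.2]{bergman74} then describes the effect of each step on $\V$: it is obtained from the previous monoid by imposing the single relation $[P_v]=[Q_v]$, that is, $w(v)v=\sum_{e\in s^{-1}(v)}r(e)$. The same theorem shows that hereditarity is preserved, since the universal localisation does not increase global dimension when the base is hereditary. Hence $\V(L_\K(E,w))\cong\mathbb F_E/\langle w(v)v=\sum r(e)\rangle=M_{(E,w)}$, and $L_\K(E,w)$ is left and right hereditary.

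For arbitrary row-finite $(E,w)$, write $(E,w)=\varinjlim(E_i,w_i)$ over finite complete weighted subgraphs, using \cite[Lemma 5.19]{H}. Completeness ensures that the inclusions induce compatible algebra maps $L_\K(E_i,w_i)\to L_\K(E,w)$ with $L_\K(E,w)=\varinjlim L_\K(E_i,w_i)$, and that $M_{(E,w)}=\varinjlim M_{(E_i,w_i)}$. Since $\V$ commutes with direct limits, the finite case passes to the limit.

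Naturality in morphisms of weighted graphs, namely complete inclusions, follows because all the identifications are induced by $v\mapsto[L_\K(E,w)v]$.

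The main obstacle is the non-unital bookkeeping. Bergman's results are stated for unital algebras over a base ring, so for finite $E$ I would note that $L_\K(E,w)$ is unital with unit $\sum v$. I would also check carefully that the matrix relations (iii) and (iv), with the convention $e_i=0$ for $i>w(e)$, genuinely encode the universal isomorphism $P_v\cong Q_v$. This requires the vertex-weighted setting, or else Preusser's reformulation \cite{P} for general weights.
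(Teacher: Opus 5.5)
Your argument is the standard Bergman-machinery proof. Over $R_0=\prod_{E^0}\K$ you adjoin, one regular vertex at a time, a universal isomorphism $(R_0v)^{w(v)}\cong\bigoplus_{e\in s^{-1}(v)}R_0r(e)$, read off $\V$ and hereditarity from Bergman's theorems, and pass to direct limits over finite complete subgraphs. The paper itself only quotes this result from \cite[Theorem 14]{P}, and elsewhere relies on \cite[Theorem 5.21]{H}, whose proof uses Bergman's constructions. Your route is the same construction the paper carries out in graded form in the proof of Theorem~\ref{arnonevas}. For vertex-weighted graphs, the only setting in which the paper defines $M_{(E,w)}$, your proof is fine.

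The gap is the hereditary clause, which is stated for arbitrary finite weighted graphs, and you defer this case rather than prove it. If a vertex emits edges of different weights, the convention $e_i=0$ for $i>w(e)$ puts prescribed zeros into the matrix $(e_i)$. Then $L_\K(E,w)$ is not $R_0\langle i,i^{-1}:\overline{P_v}\cong\overline{Q_v}\rangle$, and Bergman's theorem does not apply; the check you propose to ``carry out carefully'' would fail. The $\V$-monoid is then genuinely different. Suppose $v$ emits $e$ of weight $2$ and $f$ of weight $1$ into two distinct sinks. Relation (iv) gives $f_1^*f_1=r(f)$, so $f_1f_1^*$ is an idempotent in $vL_\K(E,w)v$ equivalent to $r(f)$, and hence $[v]\ge[r(f)]$ in $\V(L_\K(E,w))$. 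By contrast, in $\mathbb F_E/\langle 2v=r(e)+r(f)\rangle$ the element $v$ is an atom distinct from $r(f)$, as the Confluence Lemma~\ref{aralem6} shows. This is precisely the point where the computation in \cite{H} had to be corrected in \cite{P}. So for such graphs your sketch proves neither the monoid description nor hereditarity. ``Preusser's reformulation'' is not a routine check but the missing ingredient: a different presentation of $L_\K(E,w)$ to which Bergman-type results can be applied.
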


Let $R$ now be a $\Gamma$-graded ring. Recall that a $R$-module $M$ is called $\Gamma$-graded if there is a decomposition $M = \oplus_{\gamma \in \Gamma} M_{\gamma}$ such that $R_{\alpha} M_{\gamma} \subseteq M_{\alpha \gamma}$ for any $\alpha \gamma \in \Gamma$. We denote by $R\text{-} \mathrm{Gr}$ the category of $\Gamma$-graded unital left $R$-modules with morphisms the $R$-module homomorphisms that preserve grading. Moreover, we denote by $R\text{-}\mathrm{Gr}_{\mathrm{proj}}$ the full subcategory of $R$-$\mathrm{Gr}$ whose objects are the projective objects of $R\text{-}\mathrm{Gr}$ that are finitely generated as a left $R$-module. If $R$ has graded local units, we define
\[ \V^{\mathrm{gr}} (R) = \{ [P] ~|~ P \in R\text{-}\mathrm{Gr}_{\mathrm{proj}} \} \]
where $[P]$ denotes the isomorphism class of $P$ as a graded left $R$-module. $\V^{\mathrm{gr}} (R)$ becomes a $\Gamma$-monoid by defining $[P]+[Q] = [P \oplus Q]$ and ${^{\gamma}}{[P]} = [P(\gamma)]$. The graded Grothendieck group $K_0^{\mathrm{gr}} (R)$ is the group completion of $\V^{\mathrm{gr}} (R)$ \cite[Subsection 4A]{ara-hazrat-li-sims}.

We next discuss the smash product of weighted Leavitt path algebras. For a $\G$-graded ring $A$ (possibly without unit), the \emph{smash product} ring $A\#\G$
is defined as the set of all formal sums $\sum_{\gamma \in \G} r^{(\gamma)} P_\gamma $,
where $r^{(\gamma)}\in A$ and $P_\gamma$ are symbols. Addition is defined component-wise
and multiplication is defined by linear extension of the rule
$$(rP_{\a})(sP_{\b})=rs_{\a\b^{-1}}P_{\b},$$  where $r,s\in A$ and $\a,\b\in\G$.
It is routine to check that $A\#\G$ is a ring. We emphasise that the symbols $P_{\g}$ do
not belong to $A\#\G$; however if the ring $A$ has unit, then we regard the $P_\g$ as
elements of $A\#\G$ by identifying $1_A P_\g$ with $P_\g$. Each $P_{\g}$ is then an
idempotent element of $A\#\G$.

It is known that (see~\cite{ara-hazrat-li-sims} or \cite[Proposition 66]{P2020}) there is an isomorphism of
categories of modules 
\begin{equation}\label{vast5}
\psi: A\-\mathrm{Gr}\longrightarrow A\#\G\-\mathrm{Mod}.
\end{equation}

For a $\Gamma$-graded ample groupoid $\mathcal G$, in \cite{ara-hazrat-li-sims} it was shown that there is an isomorphism between the Steinberg algebras $A_\K(\mathcal G \times \Gamma)  \cong A_\K(\mathcal G)\# \Gamma$, where $\mathcal G \times \Gamma$ is the crossed-product groupoid. Specialising to the case of graph groupoids $\mathcal G_E$, since $A_\K(\mathcal G_E)\cong L_\K(E)$, we obtain that 
\[L_\K(\overline E) \cong A_\K(\mathcal G_{\overline E}) \cong A_\K(\mathcal G_E \times \mathbb Z) \cong A_\K(\mathcal G_E)\# 
\mathbb Z\cong L_\K(E) \# \mathbb Z. \]  Combined with the category isomorphism of~\ref{vast5}, this  explains why graded $K$-theory is both tractable and powerful in the context of Leavitt path algebras. Although the computations become considerably simpler, the theory still captures substantial information about the algebra itself, as in effect, one is working with the $K$-theory of algebras associated to acyclic graphs, namely the covering graphs.

There is no groupoid model for weighted Leavitt path algebras. However, the following lemma once again highlights the effectiveness of graded $K$-theory in the weighted graph setting. Compare this with  \cite[Corollary 5.3]{ara-hazrat-li-sims} for the case of Leavitt path algebras and~\cite[Proposition 74]{P2020} for Leavitt path algebras of hyper graphs. 

\begin{lemma}\label{lm:smashprod}
For any vertex-weighted graph $(E, w)$ and any field $\K$, the map
$$
\phi \colon L_\K(\overline E, \overline w) \rightarrow L_\K(E,w)\# \mathbb Z
$$ defined on the generators by
\begin{equation}\label{smashweight}
\phi(v^{(n)})=v{P_{-n}}, \qquad
\phi(e_i^{(n)})=e_i P_{-n-1}, \qquad
\phi(e_i{^{(n)}}^*)=e_i^*{P_{-n}},
 \end{equation}
for $v\in E^0$, $e\in E^1$, $1\leq i \leq w(e)$, and $n \in \mathbb Z$,
is an isomorphism of $\K$-algebras.
 \end{lemma}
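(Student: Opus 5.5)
The plan is to follow the proof of \cite[Corollary 5.3]{ara-hazrat-li-sims}: check that $\phi$ respects the defining relations of $L_\K(\overline E,\overline w)$, write down an explicit inverse, and verify that the two composites are the identity on generators.

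\textbf{Step 1: $\phi$ is a well-defined homomorphism.} We use the smash-product rule $(rP_\a)(sP_\b)=rs_{\a-\b}P_\b$ (additive notation for $\mathbb Z$) together with the degrees $\deg v=0$, $\deg e_i=1$, $\deg e_i^*=-1$.
\begin{itemize}
\item Relation (i): $vP_{-n}\,uP_{-m}=v u_{m-n}P_{-m}$. This is $\delta_{uv}\delta_{nm}vP_{-n}$ because $u$ is homogeneous of degree $0$.
\item Relation (ii): $s(e)^{(n)}e_i^{(n)}$ maps to $s(e)P_{-n}\,e_iP_{-n-1}=s(e)(e_i)_1P_{-n-1}=e_iP_{-n-1}$. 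Similarly $e_i^{(n)}r(e)^{(n+1)}$ maps to $e_iP_{-n-1}\,r(e)P_{-n-1}=e_iP_{-n-1}$. The starred versions are handled the same way.
\item Relation (iii): at a regular vertex $v^{(n)}$ the edges are the $e^{(n)}$ with $e\in s^{-1}(v)$, and they have weight $w(v)$. So $\sum_e e_i^{(n)}(e_j^{(n)})^*$ maps to $\sum_e e_iP_{-n-1}e_j^*P_{-n}=\sum_e e_i(e_j^*)_{-1}P_{-n}=\sum_e e_ie_j^*P_{-n}=\delta_{ij}vP_{-n}$.
\item Relation (iv): $\sum_i(e_i^{(n)})^*f_i^{(n)}$ maps to $\sum_i e_i^*P_{-n}f_iP_{-n-1}=\sum_i e_i^*f_iP_{-n-1}=\delta_{ef}r(e)P_{-n-1}$, which is the image of $r(e^{(n)})=r(e)^{(n+1)}$.
\end{itemize}
Hence $\phi$ extends to a $\K$-algebra homomorphism.

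\textbf{Step 2: define the inverse $\psi$.} First note that $L_\K(E,w)$ is spanned by products $x$ of generators, and each such $x$ is homogeneous. The decomposition $x=\sum_{u,u'}uxu'$ involves only finitely many nonzero terms, so we may take $x$ with $s(x)=u$ and $r(x)=u'$. Write $x=x_1\cdots x_k$ with each $x_j$ a generator, and set $\psi(xP_\b)$ to be the following path-lift in the covering graph. Starting at level $-\b-\deg x$, replace each $x_j$ by its copy $x_j^{(m_j)}$, where the levels $m_j$ are determined by the degrees of $x_1,\dots,x_{j-1}$ so that consecutive ranges and sources match.

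\textbf{Step 3 (main obstacle): $\psi$ is well defined.} This is the step I expect to be hardest, since $L_\K(E,w)\#\mathbb Z$ is not given by a presentation. I would handle it as follows.
\begin{itemize}
\item Let $\mathcal F$ be the free algebra on the generators of $L_\K(E,w)$; it is $\mathbb Z$-graded, and so is the ideal $I$ of relations.
\item Then $L_\K(E,w)\#\mathbb Z\cong(\mathcal F\#\mathbb Z)/(I\#\mathbb Z)$, where $I\#\mathbb Z$ consists of the elements $aP_\b$ with $a\in I$.
\item Define $\tilde\psi$ on $\mathcal F\#\mathbb Z$ by the lifting formula; it is multiplicative by the degree bookkeeping. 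Here we note that $\deg(y_\a)=\a$ ensures that $xP_\a\,yP_\b=xy_{\a-\b}P_\b$ matches concatenation of lifts.
\item A homogeneous element of $I$ is a sum of terms $pRq$, with $R$ a homogeneous relation and $p,q$ monomials. Each term lifts to $\tilde p\tilde R\tilde q$, where $\tilde R$ is the corresponding relation of $(\overline E,\overline w)$ at the appropriate level. Such an element vanishes in $L_\K(\overline E,\overline w)$.
\end{itemize}
Therefore $\tilde\psi$ kills $I\#\mathbb Z$ and descends to $\psi$.

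\textbf{Step 4: the composites are identities.} Finally, check on generators that $\psi\phi$ and $\phi\psi$ are the identity. For $\phi\psi$ it suffices to check on the elements $xP_\b$ from Step 2, which span $L_\K(E,w)\#\mathbb Z$. It follows that $\phi$ is an isomorphism.
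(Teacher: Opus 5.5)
Your proof is correct, but it establishes injectivity by a different mechanism from the paper.

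\textbf{The paper's route.} The paper verifies only relation (iii) explicitly. It gets surjectivity from the fact that the image of $\phi$ contains the generating set $\{vP_n, e_iP_n, e_i^*P_n\}$ of $L_\K(E,w)\#\mathbb Z$. It then simply asserts that an inverse can be defined because weighted Leavitt path algebras have normal forms. The intended argument is presumably that $\phi$ carries a normal-form basis of $L_\K(\overline E,\overline w)$ onto the basis $\{xP_\beta\}$ of $L_\K(E,w)\#\mathbb Z$, with $x$ ranging over normal-form words.

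\textbf{Your route.} You never use a basis of $L_\K(E,w)$. Instead you work in $\mathcal F\#\mathbb Z$, where monomials give an honest basis, so the degree-based lifting defines $\tilde\psi$ outright. You then use two facts: every defining relation of $L_\K(E,w)$ is homogeneous, and at each level it lifts to a defining relation of $L_\K(\overline E,\overline w)$. Hence $\tilde\psi$ kills $I\#\mathbb Z$.

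\textbf{What each buys.} Your argument is self-contained and general. It is really the statement that the smash product of an algebra with a homogeneous presentation is presented by the lifted relations. So it works verbatim for arbitrary (not only vertex-) weighted graphs, and it avoids the diamond-lemma machinery behind normal forms. The paper's route, once fleshed out, yields explicit bases. However, it requires checking that the forbidden-word conditions are compatible with lifting, for instance by taking $e^{(n)}$ as the special edge at $v^{(n)}$; the paper leaves this implicit.

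\textbf{Two points to state explicitly in your write-up.}
\begin{enumerate}
\item Take $\mathcal F$ to be the non-unital free algebra. Then every basis monomial is homogeneous and liftable, even when $E^0$ is infinite.
\item $\tilde\psi$ must take values in $L_\K(\overline E,\overline w)$, not in a free algebra. Multiplicativity in the case $\deg y\neq\alpha-\beta$, where $(xP_\alpha)(yP_\beta)=0$, relies on the lift of $x$ ending and the lift of $y$ starting at vertices on different levels. Their product vanishes only by relation (i) in the covering algebra.
\end{enumerate}
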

 \begin{proof}
To show that the map $\phi$ is a well-defined $\K$-algebra homomorphism, it suffices to verify that $\phi$ preserves the defining relations of the weighted Leavitt path algebra of the covering graph. We verify relation~(iii) of Definition~\ref{weighteddef} and leave the remaining verifications to the reader.
Let $v^{(n)}\in \overline{E}^0$. We then have
\begin{align*}
\phi\left(\sum_{e\in s^{-1}(v^{(n)})} e_i^{(n)} (e_j^{(n)})^*\right)
&= \sum_{e\in s^{-1}(v)} \phi(e_i^{(n)})\phi((e_j^{(n)})^*) \\
&= \sum_{e\in s^{-1}(v)} \bigl(e_i P_{-n-1}\bigr)\bigl(e_j^* P_{-n}\bigr) \\
&= \sum_{e\in s^{-1}(v)} e_i e_j^* P_{-n} \\
&= \left(\sum_{e\in s^{-1}(v)} e_i e_j^*\right) P_{-n} \\
&= \delta_{ij}vP_{-n} \\
&= \phi(\delta_{ij}v^{(n)}),
\end{align*} as desired. The image of $\phi$ contains the set $$S := \{vP_n, e_iP_n, e^*_iP_n \mid v \in E^0,\ e \in E^1,\ 1 \le i \le w(e),\ n \in \mathbb{Z}\}.$$ Since $S$ generates $L(E,w)\# \Z$ as a $\K$-algebras, it follows that $\phi$ is surjective. Since weighted Leavitt path algebras have normal forms~\cite{hazrat2017applications}, one can define the well-define inverse map to $\phi$, concluding that $\phi$ is an isomorphism. 
 \end{proof}

Consequently, we obtain the following useful fact.

\begin{cor}\label{cor:smashprod}
Let $(E, w)$ be a vertex-weighted graph and $\K$ a field. Then \[  T_{(E,w)} \cong M_{(\overline{E},\overline{w})} \cong \V(L_\K(\overline{E},\overline{w})) \cong \V^{\gr}(L_\K(E,w)). \]  Consequently, $T_{(E, w)}$ is the positive cone of the graded Grothendieck group of $L_{\K}(E, w)$.
\end{cor}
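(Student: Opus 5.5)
The plan is to prove the chain of isomorphisms one link at a time, and then read off the positive-cone statement from the cancellativity in Proposition~\ref{goodviasm}(2).

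\emph{First link.} The isomorphism $T_{(E,w)}\cong M_{(\overline E,\overline w)}$ is exactly Proposition~\ref{goodviasm}(1). It is given by $v(n)\mapsto v^{(n)}$. It is compatible with the $\mathbb Z$-action, where on $M_{(\overline E,\overline w)}$ the action shifts the superscript. The check is direct: the relation $w(v)v(i)=\sum_{e\in s^{-1}(v)} r(e)(i+1)$ corresponds to the covering-graph relation $\overline w(v^{(i)})v^{(i)}=\sum_{e^{(i)}\in s^{-1}(v^{(i)})} r(e)^{(i+1)}$.

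\emph{Second link.} The isomorphism $M_{(\overline E,\overline w)}\cong \V(L_\K(\overline E,\overline w))$ is Theorem~\ref{VLisoM} ($\V\circ L\cong M$) applied to $(\overline E,\overline w)$. This graph is row-finite and vertex-weighted because $(E,w)$ is. The isomorphism sends $v^{(n)}$ to $[L_\K(\overline E,\overline w)v^{(n)}]$.

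\emph{Third link.} This is the main point: $\V(L_\K(\overline E,\overline w))\cong \V^{\gr}(L_\K(E,w))$. Lemma~\ref{lm:smashprod} gives $L_\K(\overline E,\overline w)\cong L_\K(E,w)\#\mathbb Z$, so
\[
\V(L_\K(\overline E,\overline w))\cong \V(L_\K(E,w)\#\mathbb Z).
\]
The category isomorphism $\psi\colon L_\K(E,w)\text{-}\mathrm{Gr}\to L_\K(E,w)\#\mathbb Z\text{-}\mathrm{Mod}$ of~(\ref{vast5}) preserves direct sums and projective objects, since these are categorical notions. I must also check that it preserves finite generation. For this I use that $L_\K(E,w)$ has graded local units, namely finite sums of vertices, and that $A\#\mathbb Z$ has local units $\sum vP_n$. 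A graded module is finitely generated if and only if it is a graded direct summand of a finite sum of shifts $L_\K(E,w)v(n)$. Under $\psi$, the module $L_\K(E,w)v(n)$ goes to $(L_\K(E,w)\#\mathbb Z)vP_{-n}$, possibly up to a sign convention on the shift. That module is cyclic, and summands of finite sums of cyclic modules are finitely generated, so finite generation transfers in both directions. Hence $\psi$ restricts to an equivalence $L_\K(E,w)\text{-}\mathrm{Gr}_{\mathrm{proj}}\simeq L_\K(E,w)\#\mathbb Z\text{-}\mathrm{Mod}_{\mathrm{proj}}$ and induces a monoid isomorphism $\V^{\gr}(L_\K(E,w))\cong \V(L_\K(E,w)\#\mathbb Z)$.

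For the $\mathbb Z$-action, the shift functor $M\mapsto M(1)$ corresponds under $\psi$ to the automorphism of $A\#\mathbb Z$ given by $P_n\mapsto P_{n\pm1}$. Tracking the generators then gives $[L_\K(E,w)v(n)]\leftrightarrow v^{(\mp n)}$. After fixing signs consistently, the composite $T_{(E,w)}\to\V^{\gr}(L_\K(E,w))$ is $v(n)\mapsto [L_\K(E,w)v(n)]$ or its mirror, and it is a $\mathbb Z$-isomorphism. I expect this bookkeeping (finite generation and sign conventions of the shift) to be the only delicate step.

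\emph{Positive cone.} $K_0^{\gr}(L_\K(E,w))$ is by definition the group completion of $\V^{\gr}(L_\K(E,w))\cong T_{(E,w)}$. By Proposition~\ref{goodviasm}(2), $T_{(E,w)}$ is cancellative, so the canonical map $T_{(E,w)}\to K_0^{\gr}(L_\K(E,w))$ is injective. Its image is the set of classes of graded finitely generated projective modules, which is the positive cone. Therefore $T_{(E,w)}$ is identified with the positive cone of $K_0^{\gr}(L_\K(E,w))$.
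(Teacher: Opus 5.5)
Your proposal is correct and follows essentially the same route as the paper. The paper also chains Proposition~\ref{goodviasm}(1), the isomorphism $\V\circ L\cong M$ applied to the covering graph, and Lemma~\ref{lm:smashprod} combined with the category isomorphism~(\ref{vast5}). It likewise gets the positive-cone statement from the cancellativity in Proposition~\ref{goodviasm}(2), as it does explicitly in the proof of Theorem~\ref{weightedlift}. Your extra checks that finite generation is preserved and that the shift signs line up, giving $v(n)\mapsto[L_\K(E,w)v(n)]$, fill in details the paper leaves implicit.
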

\begin{proof}
Combining Lemma~\ref{lm:smashprod}, with isomorphism~\ref{vast5} we have an isomorphism of categories 
\begin{equation}\label{vast7}
\psi: L_\K(E,w)\-\mathrm{Gr}\longrightarrow L_\K(\overline E, \overline w)\-\mathrm{Mod}.
\end{equation}
This isomorphism shows that the graded Grothendieck group $K_0^{\gr}(L_\K(E,w))$ coincides with the Grothendieck group 
$K_0(L_\K(\overline E, \overline w))$. Then, by Proposition~\ref{goodviasm}, \cite[Theorem 5.21]{H}, and (\ref{vast7}), we have a sequence of $\Z$-isomophisms \[  T_{(E,w)} \cong M_{(\overline{E},\overline{w})} \cong \V(L_\K(\overline{E},\overline{w})) \cong \V^{\gr}(L_\K(E,w)),\]  thus finishing our proof.
\end{proof}

We recall Preusser's conditions on a weighted Leavitt path algebras to be isomorphic to  an unweighted one~\cite{Pre,Preusser2021}. 

\begin{deff}\label{defLPA}
We say that a weighted graph $(E,w)$  satisfies the {\it Preusser conditions} if the following hold true:
\begin{enumerate}[({P}1)]
\item Any vertex $v\in E^0$ emits at most one weighted edge.
\item Any vertex $v\in T(r(E^1_w))$ emits at most one edge.
\item If two weighted edges $e,f\in E^1_w$ are not in line, then $T(r(e))\cap T(r(f))=\varnothing$.
\item If $e\in E^1_w$ and $c$ is a cycle based at some vertex $v\in T(r(e))$, then $e$ belongs to $c$.
\end{enumerate}
\end{deff}

In \cite{Preusser2021} it was proved that a weighted Leavitt path algebra $L_\K(E,w)$ is isomorphic to an unweighted Leavitt path algebras if and only if Preusser's conditions hold.

We illustrate, by means of an example, how Preusser's conditions allow one to construct an unweighted graph whose associated Leavitt path algebra is (graded) isomorphic to that of a weighted graph. We emphasis that to preserve the grading, however, the canonical grading on edges must be modified, as the example below demonstrates. Consider the weighted graph $(E,w)$:
\[
\begin{tikzpicture}[
    >=Stealth,
    shorten >=1pt,
    every node/.style={font=\small},
    scale=1,
    transform shape
]
\tikzset{
  multiedge/.style={->, shorten >=1pt, shorten <=1pt}
}

%--------------------------------
% vertices
%--------------------------------

\node (v0) at (0,1) {$u$};
\node (v1) at (2,1) {$v$};
\node (v2) at (4,1) {$w$};

\draw[->] (v0) -- node[midway,above] {$(e,2)$} (v1);
\draw[->] (v1) -- node[midway,above] {$(f,2)$} (v2);

\end{tikzpicture}
\]

Then following~\cite{Preusser2021}, we first obtain the intermediate weighted graph $(\widetilde{E},w)$
\[
\begin{tikzpicture}[
    >=Stealth,
    shorten >=1pt,
    every node/.style={font=\small},
    scale=1,
    transform shape
]

\tikzset{
  multiedge/.style={->, shorten >=1pt, shorten <=1pt}
}

%--------------------------------
% vertices
%--------------------------------

\node (v0) at (0,1) {$u$};
\node (v1) at (2,1) {$v$};
\node (v2) at (4,1) {$w$};

\draw[->] (v0) -- node[midway,above] {$(e,2)$} (v1);
\draw[->,  transform canvas={yshift=2pt}] (v2) -- node[midway,above] {$f_1$} (v1);
\draw[->,  transform canvas={yshift=-2pt}] (v2) -- node[midway,below] {$f_2$} (v1);
\end{tikzpicture}
\]

The assignment that sends vertices of $E$  to the corresponding ones in $\widetilde{E}$, $e_i\mapsto e_i$, $f_i\mapsto f_i^*$, and ghost edges accordingly, induces an isomorphism $\phi: L_\K(E,w)\cong L_\K(\widetilde{E},w)$. Thus if we keep the canonical grading for $E$, and in $\widetilde{E}$ we assign $\deg(e_i)=1$ and $\deg(f_i)=-1$, then the map $\phi$  is indeed a graded isomorphism.  Next we derive a unweighted graph $F$ from $\widetilde{E}$ as follows: 
\[
\begin{tikzpicture}[
    >=Stealth,
    shorten >=1pt,
    every node/.style={font=\small},
    scale=1,
    transform shape
]

\tikzset{
  multiedge/.style={->, shorten >=1pt, shorten <=1pt}
}

%--------------------------------
% vertices
%--------------------------------

\node (v0) at (0,1) {$u$};
\node (v1) at (2,2) {$v_1$};
\node (v11) at (2,0) {$v_2$};
\node (v2) at (4,1) {$w$};

\draw[->] (v0) -- node[midway,above] {$e_1$} (v1);
\draw[->] (v11) -- node[midway,below] {$e_2$} (v0);
\draw[->,  transform canvas={yshift=2pt}] (v2) -- node[midway, above] {$f^1_1$} (v1);
\draw[->,  transform canvas={yshift=-2pt}] (v2) -- node[midway,below] {$f^1_2$} (v1);

\draw[->,  transform canvas={yshift=2pt}] (v2) -- node[midway,above right] {$f^2_1$} (v11);
\draw[->,  transform canvas={yshift=-2pt}] (v2) -- node[midway, below right] {$f^2_2$} (v11);

\end{tikzpicture}
\]
The assignment $u\mapsto u$, $w\mapsto w$ and $v\mapsto v_1+v_2$ on vertices and  $e_1\mapsto e_1$, $e_2\mapsto e_2^*$, $f_i\mapsto f_i^1+f_i^2$ on edges induce an isomorphism $L_\K(\widetilde{E},w)\cong L_\K(F)$. Thus if in the graph $F$ we assign $\deg(e_1)=1$, $\deg(e_2)=-1$ and $\deg(f^1_i)=\deg(f^2_i)=-1$, then above map is indeed a graded isomorphism. Putting these together we have \[L_\K(E,w)\cong_{\gr} L_\K(\widetilde{E},w) \cong_{\gr} L_\K(F).\]

We are in a position to prove one of the main results of the note. 

\begin{prop}\label{cortal}
   Let $(E,w)$ be a vertex-weighted graph. If $(E,w)$ does not satisfy Preusser's condition, then $T_{(E,w)}$ fails to have the refinement property. 
\end{prop}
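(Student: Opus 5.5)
The plan is to argue by contradiction, modelled on Example~\ref{hfgfyy}. By Proposition~\ref{goodviasm}(1) I work in $M_{(\overline E,\overline w)}$, the graph monoid of the acyclic covering graph. There the Confluence Lemma~\ref{aralem6} applies, and by Proposition~\ref{goodviasm}(2) cancellation is available. Assuming $T_{(E,w)}$ has refinement, I split into cases according to which of (P1)--(P4) fails. In each case the failure yields a vertex $v$ with $w(v)\ge 2$ and a relation
\[
w(v)\,v(0)=\sum_{e\in s^{-1}(v)} r(e)(1),
\]
or an iterate of it along paths. I then apply refinement to the equality $v(0)+\cdots+v(0)=\sum_{e} r(e)(1)$. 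This gives elements $x_{ij}$ with $v(0)=\sum_j x_{ij}$ for each of the $w(v)$ copies, and $r(e_j)(1)=\sum_i x_{ij}$.

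The key step is to control the decompositions of $v(0)$. Since $v^{(0)}$ carries coefficient $1<w(v)$, it admits no flow. So if $x+y=v(0)$, confluence forces $x+y\to v(0)$ in the free monoid. Such a flow can only pass through predecessors of $v^{(0)}$ in $\overline E$, that is, vertices at level $-1$ or lower. I would therefore first replace $v$ by a vertex that is suitably ``upstream'', using the failure of the relevant Preusser condition. The goal is a $v$ for which every $x\le v(0)$ is either $0$ or $v(0)$ modulo the hereditary pieces coming from $T(r(e))$. Here Theorem~\ref{sat-hered-ord-id-lat} and Proposition~\ref{goodviasm}(5), on order ideals generated by saturated hereditary sets, are used to localize supports.

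Once this is arranged, some row $i$ has a summand $x_{ij}=v(0)$ (up to such pieces). Hence $v(0)\le r(e_j)(1)$ for some edge $e_j$.

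I then derive the contradiction as in Example~\ref{hfgfyy}. Writing $r(e_j)(1)=v(0)+a$ and flowing forward, I compare against the defining relation. For (P1) the two weighted edges leaving $v$ give two distinct ranges that must both absorb copies of $v(0)$. For (P2) and (P3) the bifurcation, or the overlapping trees $T(r(e))\cap T(r(f))\ne\varnothing$, produce the same obstruction downstream. For (P4) a cycle in $T(r(e))$ avoiding $e$ supplies the equation $v(1)=v(0)+a$ pattern. In each case I aim to obtain an identity $z=z+b$ with $b\neq 0$. Cancellativity then gives $b=0$, and this contradicts conicality (Corollary~\ref{Conical}) or the support argument.

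I expect the main obstacle to be the middle step: showing that $v(0)$, or the chosen element, is effectively indecomposable. In the acyclic covering graph, predecessors $u^{(-1)}$ with $w(u)u^{(-1)}=v^{(0)}$ can split it. My first attempt is to choose $v$ maximal, with respect to the path preorder, among the vertices witnessing the failure. I would then show that any nontrivial splitting of $v(0)$ would transfer the failure of the Preusser condition to a predecessor, contradicting maximality. If that fails, I would replace $v(0)$ by a suitable minimal element of the order ideal generated by $v(0)$ and run the same refinement argument there.
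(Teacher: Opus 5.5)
\textbf{There is a genuine gap.} Your framework matches the paper's: covering graph, a refinement matrix for $w(v)v(0)=\sum_e r(e)(1)$ or an iterate, the Confluence Lemma, and cancellation. But the step that does all the work is left open, namely controlling how the rows $v(0)=\sum_j a_{ij}$ can arise, and neither of your fixes works as stated.

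\emph{Maximality does not select a witness.} Let $v$ emit two edges of weight $2$, one to a sink $a$ and one to $t$, and let $t$ emit a single edge of weight $2$ back to $v$. Then $v(0)=t(-1)+t(-1)$ in $T_{(E,w)}$, so $v(0)$ splits nontrivially. The witnesses $v$ and $t$ lie on a common cycle, so the path preorder cannot single either one out. Moreover, at $t$ the failure is of type (P2), with the different relation $4t(-1)=t(1)+a(1)$, so the failure does not transfer in kind.

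\emph{The rows alone give nothing.} The same example shows that rows $v(0)=t(-1)+t(-1)$ satisfy the row equations with no entry equal to $v(0)$. So your inference that some row contains $v(0)$ cannot be drawn from the rows; the contradiction must come from the column sums.

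\emph{The fallback via minimal elements needs atoms below $v(0)$, which need not exist.} An infinite backward chain $\cdots\to z_2\to z_1\to v$ of single edges of weight $2$ gives $v(0)=2^j z_j(-j)$ for every $j$.

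\emph{Localising supports by order ideals cannot separate anything when the ranges coincide.} For two loops of weight $2$ at $v$ (Example~\ref{hfgfyy}) both ranges are $v$ and $v(0)\le 2v(1)$. This also shows your ``two distinct ranges'' in the (P1) case is not available in general.

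\textbf{The missing idea, as the paper handles it, is to analyse where a splitting of $v(0)$ can come from, rather than trying to rule splittings out.} In $\overline E$, a flow $x\to v^{(0)}$ with $x\neq v^{(0)}$ must end with a step $l\,z^{(-1)}\to v^{(0)}$, where $z$ emits exactly one edge, and that edge goes into $v$. The paper then proceeds as follows.
\begin{enumerate}
\item It uses this to express the entries $a_{i1}$ through such $z_i(-1)$ and concludes $r(e_1)(1)=\sum_i a_{i1}=m\,v(0)$.
\item Confluence gives $m\ge n=w(v)$. Otherwise $m\,v^{(0)}$ admits no flow, while $r(e_1)^{(1)}$ cannot flow back to level $0$.
\item Hence $r(e_1)(1)=r(e_1)(1)+r(e_2)(1)+\cdots+r(e_k)(1)+(m-n)v$, which contradicts cancellation since $k>1$.
\end{enumerate}
Cases (P2)--(P4) are handled by analogous flow arguments on the iterated relations ($nv=\sum_j w_j(i_j)$, $nv(i)=kw(j)$, $nmv=nv(k')$). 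Your proposal needs an argument of this kind, for every case including (P3) and (P4), which you only gesture at, before it proves the statement.
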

\begin{proof}
We check that if any of four Preusser's conditions~(\ref{defLPA}) fails, then $T_{(E,w)}$ is not refinement. 

1. Suppose (P1) fails. Since the graph is a vertex-weighted graph, then there is  a vertex $v$ which emits at least two edges of weight $n>1$. Thus in $T_{(E,w)}$ we have $nv(i)=\sum_{\{e\in s^{-1}(v)\}} r(e)(i+1)$, where the sum consists of more than one item. Thus $nv=w_1(1)+\cdots + w_k(1)$, where $w_k\in E^0$ and $k>1$ and $n>1$. Suppose $T_{(E,w)}$ is a refinement monoid. Then there exist elements $a_{ij} \in T_{(E,w)}$, $1\leq i \leq n$, $1 \leq j \leq  k$,  such that 
    \begin{align*}
     v&=a_{11}+\cdots +a_{1k}\\ 
      v&=a_{21}+\cdots +a_{2k}\\ 
     &\vdots \\
     v&=a_{n1}+\cdots +a_{nk}\\
     w_i(1)&=a_{1i}+\cdots +a_{ni} \text{ where } 1\leq i \leq k \\
     \end{align*}
    Since $v=a_{i1}+\cdots +a_{ik}$, $1\leq i \leq n$, the Confluence Lemma~\ref{aralem6} implies that the both sides of the equality flow to the same element in the free monoid. But  the only transformation allowed on $v$ is when we have $nv$, $n>1$, thus the confluence property implies that $a_{i1}+\cdots +a_{ik}\rightarrow v$. Since we are working on the covering graph, all elements $a_{ij}$ first need to  flow to level $-1$ before the last transformation to $v$. Now transformation of vertices from level $-1$ to  a single vertex $v$ in the free commutative monoid, requires there are weighted edge $f_i$, of weight $l_i$, $1\leq i \leq n$ such that $s(f_i)=z_i$, $r(f_i)=v$ and all $a_{ij}$, $1\leq j \leq k$ transformed to $z_i(-1)$, so that from level -1 to $0$ they will be flown to the single vertex $v$, by a transformation of the form  $l_iz_i(-1) \rightarrow v$. In particular, if $a_{i1}\not = 0$, then it will  flow to some copies of $z_i(-1)$, i.e., $a_{i1}=l'_iz_i(-1)$ in $T_{(E, w)}$, $l'_1\in \mathbb N$ and $1\leq i \leq n$.  Note that the vertices $z_i$  are not necessarily distinct. Since $a_{i1}=l'_iz_i(-1)$  and $l_iz_i(-1)=v$ in $T_{(E, w)}$, it follows that $\sum_{1\leq i \leq n}a_{i1} = m v$ in $T_{(E, w)}$ for some $m\ge 1$.  Since $\sum_{1\leq i \leq n}a_{i1}= w_1(1)$, we obtain that $w_1(1) =  m v$ in $T_{(E, w)}$. By Confluence Lemma~\ref{aralem6},  both $w_1(1)$ and $mv$  flow to the same element in the free monoid. This implies that $m \ge n$. We then have $$w_1(1) = m v = nv + (m-n)v = w_1(1)+\cdots + w_k(1) + (m-n)v= w_1(1) + x$$  in $T_{(E, w)}$, where $x = w_2(1)+\cdots + w_k(1) + (m-n)v$. Since $k > 1$, we must have $x \neq 0$. On the other hand, by Proposition~\ref{goodviasm}(2), $T_{(E,w)}$ is cancellative. Therefore, the equality $w_1(1)=w_1(1)+x$ would imply  $x =0$, a contradiction, and so $T_{(E,w)}$ is not refinement. 
    
   2. Suppose (P2) fails. Then there is a vertex $z\in T(r(E^1_w))$ that emits more than one edge. Hence there is a weighted edge $f$, $w(f)=n>1$, such that $v=s(f)$ and the $r(f)\geq z$. In $T_{(E,w)}$ we then have $nv=w_1(i_1)+\cdots + w_k(i_k)$, where $w_k\in E^0$ and $i_k, k>1$ and $n>1$. A similar argument as in part (1) shows that $T_{(E,w})$ is not refinement.

 3. Suppose (P3) fails. Then there are two weighted edges $e,f\in E^1_w$ which are not in line, but $T(r(e))\cap T(r(f))\not =\varnothing$. If $s(e)=s(f)$ then P(1) fails, and thus $T_{(E,w)}$ is not refinement by part 1. So suppose $v:=s(e)$ and $w:=s(f)$ and $v\not = w$. 
 If P(2) also fails then $T_{(E,w)}$ is already not refinement. Thus there are paths without bifurcation from $r(e)$ and $r(f)$ to a vertex $z$. It follows that there are $n,m>1$, such that $nv(i)=kw(j)$, where $i,j \in \mathbb N$. 
 Without loss of generality, we can assume that $nv=kw(j)$, for some $j\in \mathbb N$. The rest of the argument is rather similar to part 1. If $T_{(E,w)}$ is refinement, we then have 
\begin{align*}
     v&=a_{11}+\cdots +a_{1k}\\ 
      v&=a_{21}+\cdots +a_{2k}\\ 
     &\vdots \\
     v&=a_{n1}+\cdots +a_{nk}\\
     w(j)&=a_{1i}+\cdots +a_{ni} \text{ where } 1\leq i \leq k \\
     \end{align*}
As there is no transformation on $v$ allowed, Confluence Lemma~\ref{aralem6}  gives that $a_{i1}+\cdots +a_{ik} \rightarrow v$. Thus there should be an edge $f$ with $s(f)=t$ and $r(f)=v$ and all $a_{ij}$ flows to $t(-1)$ which after the next transformation they arrive to $v$. However it is then impossible that the same $a_{ij}$ on the level -1, flow to $w$ on level $0$ as $v\not = w$. A contradiction, and thus $T_{(E,w)}$ is not refinement. 
    
Suppose (P4) fails. Then there is an edge $e\in E^1_w$ and a cycle $c$ based at some vertex $w\in T(r(e))$, such that $e$ does not belongs to $c$. If any of the conditions P(1) to P(3) fail, then $T_{(E,w)}$ is not refinement and we are done. So assume conditions P(1) to P(3) hold. Thus there is a path without bifurcation from $v:=s(e)$ to $w$ the base of the cycle $c$. Hence, in $T_{(E,w)}$, we have $nv=w(k)$, for some $k\in \mathbb N$. The P(1) to P(4) conditions guarantee the cycle $c$ has no exits, and there is no path returning to $v$. Therefore there is a natural number $m$ such that $mw=w(k')$ for some $k'>1$. Combining these two equations we have 
  \[nm v= mw(k)=w(k+k')=nv(k').\]
If $T_{(E,w)}$ is refinement, then there are $a_{ij}$ that flows from level -1 to $v$ at level 0. But then the same $a_{ij}$ should flow further to level $k'>1$ to reach to $v(k')$. This means that $v$ is on a cycle. But $e$ was not part of the cycle $c$, meaning, $T(r(e))$ emits more than one edge, a contradiction. 
\end{proof}

As a corollary of Proposition \ref{cortal} and \cite[Theorem 5.2.1 ]{Pre}), we obtain the following interesting result.

\begin{thm}\label{mainresult-sec3}
The graded Grothendieck group $K^{\gr}_0$ distinguishes the class of vertex-weighted Leavitt path algebras from the unweighted class.
\end{thm}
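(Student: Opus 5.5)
The plan is to make the statement precise first. Suppose $(E,w)$ is a vertex-weighted graph and $F$ is an (unweighted) graph. If there is an order-preserving $\mathbb Z[x,x^{-1}]$-module isomorphism $K_0^{\gr}(L_\K(E,w))\cong K_0^{\gr}(L_\K(F))$, then $(E,w)$ satisfies the Preusser conditions of Definition~\ref{defLPA}. By \cite{Preusser2021} (equivalently \cite[Theorem 5.2.1]{Pre}) it then follows that $L_\K(E,w)$ is isomorphic to an unweighted Leavitt path algebra. Put differently: whenever $L_\K(E,w)$ is genuinely weighted, i.e. not isomorphic to any $L_\K(F)$, its $K_0^{\gr}$ cannot be order-isomorphic, as an ordered $\mathbb Z$-module, to that of any unweighted Leavitt path algebra.

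The argument goes through positive cones.

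\textbf{Step 1.} By Corollary~\ref{cor:smashprod}, the positive cone of $K_0^{\gr}(L_\K(E,w))$ is $\V^{\gr}(L_\K(E,w))\cong T_{(E,w)}$. This uses that $T_{(E,w)}$ is cancellative (Proposition~\ref{goodviasm}(2)), so it embeds in its group completion and is exactly the positive cone. Applying the same corollary with all weights equal to $1$ gives that the positive cone of $K_0^{\gr}(L_\K(F))$ is $T_F$. An order-preserving isomorphism whose inverse is also order-preserving restricts to positive cones, so we get a $\mathbb Z$-monoid isomorphism $T_{(E,w)}\cong T_F$.

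\textbf{Step 2.} Talented monoids of unweighted graphs are refinement monoids. Indeed, $T_F\cong M_{\overline F}$ by Proposition~\ref{goodviasm}(1), and graph monoids of ordinary row-finite graphs are refinement monoids by \cite{amp}. Refinement is a property of the abstract monoid, so it transfers along the isomorphism. Hence $T_{(E,w)}$ has refinement.

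\textbf{Step 3.} By the contrapositive of Proposition~\ref{cortal}, $(E,w)$ satisfies the Preusser conditions. Preusser's theorem then yields $L_\K(E,w)\cong L_\K(F')$ for some unweighted graph $F'$. This completes the distinction, and Example~\ref{ggfear} serves as an illustration.

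The main obstacle is the precise formulation rather than the logic. Preusser's isomorphism is generally not graded for the canonical gradings; the discussion preceding Proposition~\ref{cortal} shows that edge degrees must be altered. So I will state the conclusion as "$L_\K(E,w)$ is (ungraded) isomorphic to an unweighted Leavitt path algebra." Equivalently, in contrapositive form: if $L_\K(E,w)$ is not isomorphic to any unweighted Leavitt path algebra, then $K_0^{\gr}$ separates it from every unweighted one. This avoids needing any graded version of Preusser's theorem.

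A secondary check is that refinement for $T_F$ holds when $F$ is row-finite but possibly infinite. This follows because $M_{\overline F}$ is a direct limit of graph monoids of finite complete subgraphs, and refinement passes to direct limits.
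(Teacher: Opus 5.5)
Your proposal is correct and is essentially the paper's proof. The positive cones of $K_0^{\gr}$ are the cancellative talented monoids, and $T_F$ has refinement; your justification of this via $T_F\cong M_{\overline F}$ and the Ara--Moreno--Pardo refinement result fills in a step the paper simply asserts. The contrapositive of Proposition~\ref{cortal} then gives Preusser's conditions, and Preusser's theorem gives an isomorphism with an unweighted Leavitt path algebra.

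The paper also appends a converse and a ``graded after regrading'' remark, which you deliberately avoid, and that caution is justified. A single loop of weight $2$ gives $L_\K(1,2)$ with the grading reversed, and $\langle v(i)\mid 2v(i)=v(i+1)\rangle$ is not $\mathbb Z$-isomorphic to the talented monoid $\langle v(i)\mid v(i)=2v(i+1)\rangle$ of the two-petal rose. So the regraded isomorphism does not identify $K_0^{\gr}$ with the canonical $K_0^{\gr}$ of an unweighted Leavitt path algebra.
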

\begin{proof}
Let $L_\K(E,w)$ be a weighted Leavitt path algebra. Suppose there is an unweighted graph $F$ such that there is an order preserving  $\mathbb Z[x,x^{-1}]$-module isomorphism $K_0^{\gr}(L_\K(E,w))\cong K_0^{\gr}(L_\K(F))$. Since the positive cones of $K_0^{\gr}$-groups coincide with the talented monoids, we have   $T_{(E,w)} \cong T_F$. But the monoid $T_F$ is refinement. Therefore, by Proposition~\ref{cortal}, the weighted graph $(E,w)$ satisfies  Preusser's conditions. Thus $L_\K(E,w)$ is isomorphic to a unweighted Leavitt path algebras. 
In fact one can establish a $\mathbb Z$-graded isomorphism, by assigning appropriate grading to edges (see the isomorphism between the Leavitt path algebras in Theorem 5.2.1 of~\cite{Pre}).  

On the other hand, suppose that 
$K_0^{\gr}(L_\K(E,w))$  cannot be realised as  $K_0^{\gr}(L_\K(F))$ for any unweighted graph $F$. Then $L_\K(E,w)$ cannot be graded isomorphic to a unweighted Leavitt path algebra. So it cannot be isomorphic to a Leavitt path algebra. Otherwise Preusser's condition implies that $L(E,w)$ is isomorphic with a Leavitt path algebra of $L_\K(F')$ for certain graph $F'$ and with appropriate grading they are in fact $\mathbb Z$-graded isomorphic, which is a contradiction. 
\end{proof}

\begin{lemma}\label{hfgyhf}
Let $(E,w)$ be a vertex-weighted graph and $\K$ a field.  Then, there is a semilattice isomorphism between the  semilattices of hereditary and saturated subsets of $E^0$ and the (graded) ideals of $L_\K(E,w)$ generated by vertices. 
\end{lemma}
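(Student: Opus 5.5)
We will define the two maps $H\mapsto I(H)$, where $I(H)$ is the ideal of $L_\K(E,w)$ generated by the vertices in $H$, and $I\mapsto I\cap E^0$, where $I$ is an ideal generated by vertices. We then show they are mutually inverse and order preserving, hence semilattice isomorphisms. Any ideal generated by a set of vertices is generated by homogeneous elements of degree $0$, so it is automatically graded. This is why "(graded)" in the statement costs nothing.

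\emph{Step 1: $I\cap E^0$ is hereditary and saturated for any ideal $I$.} This is the algebraic counterpart of Theorem~\ref{sat-hered-ord-id-lat}(3). For hereditary: if $v=s(e)\in I$, then relation (iv) of Definition~\ref{weighteddef} with $e=f$ gives
\[
r(e)=\sum_{1\le i\le w(v)}e_i^*e_i=\sum_i e_i^* v e_i\in I .
\]
For saturated: if $v\in E^0_{\reg}$ and $r(s^{-1}(v))\subseteq I$, then relation (iii) with $i=j=1$ gives
\[
v=\sum_{e\in s^{-1}(v)}e_1e_1^*=\sum_{e\in s^{-1}(v)} e_1\, r(e)\, e_1^*\in I .
\]
Hence for an ideal $I$ generated by a set of vertices $X$, the set $H=I\cap E^0$ is hereditary and saturated and contains $X$. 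Therefore $I=I(H)$, which gives $I(I\cap E^0)=I$.

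\emph{Step 2: $I(H)\cap E^0=H$ for $H\in\mathcal H_{(E,w)}$.} This is the key point, and I expect it to be the main obstacle, since there is no groupoid or graded uniqueness theorem available in the weighted setting. I would use $K$-theory instead.

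Consider the quotient map $\pi: L_\K(E,w)\to L_\K(E,w)/I(H)$. There is a natural candidate quotient $L_\K(E\setminus H, w)$, where $E\setminus H$ is the quotient weighted graph with vertices $E^0\setminus H$ and edges with range outside $H$. Because $H$ is hereditary and saturated, every regular vertex outside $H$ still emits an edge in $E\setminus H$, and vertex-weightedness is inherited. The assignment that kills $H$, and kills the edges and ghost edges whose range lies in $H$, should respect relations (i)–(iv), giving a surjection $L_\K(E,w)/I(H)\to L_\K(E\setminus H,w)$. Each vertex $u\notin H$ maps to $u$, which is nonzero: by Theorem~\ref{VLisoM}, $\V(L_\K(E\setminus H,w))\cong M_{(E\setminus H,w)}$, and $u\neq 0$ there because that monoid is conical with no relation killing a generator (compare Corollary~\ref{Conical}). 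Hence $u\notin I(H)$.

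The delicate verification is relation (iv) when some edges from $v$ have range in $H$. For $e,f\in s^{-1}(v)$ with $r(e),r(f)\notin H$ the relation is unchanged. Relation (iii) for $v\notin H$ requires dropping the terms $e_ie_j^*$ with $r(e)\in H$, and these are exactly the terms lying in $I(H)$ upstairs. So the map is well defined. Alternatively, one can bypass the quotient-graph construction by using the isomorphism of Theorem~\ref{VLisoM} together with Proposition~\ref{sandsubmon} and Theorem~\ref{sat-hered-ord-id-lat}. The order ideal of $\V(L_\K(E,w))$ generated by $[L v]$, $v\in H$, is $M_{(E_H,\omega_r)}$, and if $u\in I(H)$ then $[Lu]$ would lie in the order ideal corresponding to $I(H)$. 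I would keep this as a backup argument.

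\emph{Step 3: conclusion.} Both maps clearly preserve inclusion. The maps also respect joins: $I(H\vee H')=I(H)+I(H')$, because the ideal on the right is generated by vertices, and Step 1 shows its vertex set is the saturated hereditary closure of $H\cup H'$. Together with Steps 1 and 2, this gives the semilattice isomorphism.
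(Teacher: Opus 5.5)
Your proposal is correct. Step~1 is the paper's argument: relation (iv) with $e=f$ gives heredity, and relation (iii) with $i=j=1$ gives saturation. Your deduction $I=I(I\cap E^0)$ and your join bookkeeping also agree with the paper's conclusion.

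The genuine difference is Step~2. The paper does not prove $I(H)\cap E^0=H$; it imports it from \cite[Theorem~2.10]{haznam}. You instead prove it with the paper's own tools. You build the homomorphism $L_\K(E,w)\to L_\K(E\setminus H,w)$ that kills $H$ and the edges ranging into $H$. Your relation checks are right; the cases you skip ($v\in H$ in (iii), and $r(e)\in H$ or $r(f)\in H$ in (iv)) send both sides to $0$. You then certify $u\neq 0$ for $u\notin H$ through $\V(L_\K(E\setminus H,w))\cong M_{(E\setminus H,w)}$ (Theorem~\ref{VLisoM}) and the Confluence Lemma~\ref{aralem6}. What you actually need there is not conicality but the fact, shown inside the proof of Corollary~\ref{Conical}, that a nonzero element of $\mathbb{F}_{E\setminus H}$ can never flow to $0$, because every regular vertex of $E\setminus H$ still emits an edge.

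Your route makes the lemma self-contained modulo Theorem~\ref{VLisoM}. It replaces a uniqueness or normal-form argument by $K$-theory, and yields the quotient map $L_\K(E,w)/I(H)\to L_\K(E\setminus H,w)$ as a by-product. The price is dependence on Preusser's $\V\circ L\cong M$, itself a Bergman-machinery result, whereas the paper's citation is shorter.

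Your backup argument via order ideals also works once you add one standard fact. If an idempotent $u$ lies in the ideal generated by idempotents $x_1,\dots,x_n$, then $Lu$ is a direct summand of a finite direct sum of copies of the $Lx_j$. Hence $[Lu]$ lies in the order ideal generated by the $[Lx_j]$, and Theorem~\ref{sat-hered-ord-id-lat} finishes the argument.
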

\begin{proof}
We denote by $\mathcal{L}_{\text{ver}}(L_{\K}(E, \omega))$ the set of all ideals of $L_{\K}(E, \omega)$ generated by vertices. It is obvious that $(\mathcal{L}_{\text{ver}}(L_{\K}(E, \omega)), \subseteq)$ is a join-semilattice. 	We next claim that \[\mathcal{H}_{(E, \omega)}\cong \mathcal{L}_{\text{ver}}(L_{\K}(E, \omega))\] as lattices. Indeed, let $I$ be a nonzero ideal of $L_{\K}(E, \omega)$ generated by vertices. It is obvious that $I \cap E^0\neq \emptyset$.
We show that $I\cap E^0$ is a hereditary and saturated subset of $E^0$. Let $e\in E^1$ with $v:=s(e)\in I$. We then have $e_i = ve_i\in I$ for all  $1\le i\le \omega(e)$, and so $r(e)=\sum_{1\leq i\leq \omega(v)}e_i^*e_i\in I$. This implies that $I\cap E^0$ is hereditary. Let $v\in E_{\reg}^0$ with the property that $r(s^{-1}(v))\subseteq I\cap E^0$. We then have $r(e_1)\in I$ for all $e\in s^{-1}(v)$, and so $e_1 = e_1 r(e_1)\in I$ for all $e\in s^{-1}(v)$. Therefore, we obtain that $v = \sum_{e\in s^{-1}(v)}e_1e_1^*\in I$, and so $I\cap E^0$ is saturated, as desired.
	
On the other hand if $H$ is hereditary and saturated and $I(H)$ is an ideal of $L(E,\omega)$ generated by $H$, then $I(H) \cap E^0= H$ by 
\cite[Theorem 2.10]{haznam}. From these observations, we immediately obtain that the corresponding $\alpha: \mathcal{L}_{\text{ver}}(L_{\K}(E, \omega))\longrightarrow \mathcal{H}_{(E, \omega)}$, defined by $I\longmapsto I\cap E^0$, is a join-semilattice isomorphism, thus finishing the proof.
\end{proof}

We end this section by the following interesting result.

\begin{thm}\label{mainesult-sec3.2}
Let $(E,w)$ and $(F,w)$ be vertex-weighted graphs and $\K$ a field. An order preserving $\Z[x,x^{-1}]$-module  isomorphism $$K^{\gr}_0(L_\K(E,w))\rightarrow  K^{\gr}_0(L_\K(F,w)),$$ gives a join-semilattice isomorphism between ideals of 
$L_\K(E,w)$ and $L_\K(F,w)$  
generated by vertices.
\end{thm}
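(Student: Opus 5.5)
The plan is to push the given isomorphism down to the talented monoids, pass to $\mathbb{Z}$-order ideals, and then to hereditary saturated subsets and vertex-generated ideals. Let $\phi: K^{\gr}_0(L_\K(E,w))\rightarrow K^{\gr}_0(L_\K(F,w))$ be an order preserving $\Z[x,x^{-1}]$-module isomorphism.

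First, restrict $\phi$ to positive cones. Here I must check that $\phi^{-1}$ is also order preserving. The statement only says $\phi$ is order preserving, so I read ``order isomorphism'' as intended, as is standard in the graded classification setting. Under that reading, $\phi$ maps the positive cone $K^{\gr}_0(L_\K(E,w))^+$ bijectively onto $K^{\gr}_0(L_\K(F,w))^+$. By Corollary~\ref{cor:smashprod} these positive cones are the images of $T_{(E,w)}$ and $T_{(F,w)}$. Since the talented monoids are cancellative (Proposition~\ref{goodviasm}(2)), they embed into their group completions, so the positive cones are exactly $T_{(E,w)}$ and $T_{(F,w)}$. Compatibility with multiplication by $x$ means compatibility with the $\mathbb{Z}$-action ${}^1(-)$. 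Hence $\phi$ restricts to a $\mathbb{Z}$-monoid isomorphism $T_{(E,w)}\to T_{(F,w)}$.

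Second, a $\mathbb{Z}$-monoid isomorphism preserves the algebraic preorder $x\le y$ and commutes with the action. It therefore induces a lattice isomorphism between the lattices of $\mathbb{Z}$-order ideals of $T_{(E,w)}$ and $T_{(F,w)}$, with $I\mapsto \phi(I)$. This is routine: images of submonoids closed under the action and hereditary for $\le$ keep these properties, and the inverse map is given by $\phi^{-1}$.

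Third, by Proposition~\ref{goodviasm}(5), combined with Theorem~\ref{sat-hered-ord-id-lat}(4), the lattice of $\mathbb{Z}$-order ideals of $T_{(E,w)}$ is isomorphic to $\mathcal{H}_{(E,w)}$, and similarly for $F$. Concretely, $H\in\mathcal{H}_{(E,w)}$ corresponds to the $\mathbb{Z}$-order ideal generated by $\{v(i)\mid v\in H, i\in\mathbb{Z}\}$. In the other direction, a $\mathbb{Z}$-order ideal $J$ gives $H=\{v\in E^0 \mid v(0)\in J\}$. This is hereditary and saturated by the same argument as in Theorem~\ref{sat-hered-ord-id-lat}(3), using the relations $w(v)v(i)=\sum r(e)(i+1)$ and closure under the action. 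To see that the two assignments are mutually inverse, I would use Proposition~\ref{goodviasm}(1) to identify $T_{(E,w)}$ with $M_{(\overline E,\overline w)}$ and then invoke Theorem~\ref{sat-hered-ord-id-lat}. The hereditary saturated subsets of $\overline E$ that are invariant under the shift $v^{(n)}\mapsto v^{(n+1)}$ are exactly $\{v^{(n)}\mid v\in H,\, n\in\mathbb{Z}\}$ with $H\in\mathcal{H}_{(E,w)}$, so the $\mathbb{Z}$-order ideals of $T_{(E,w)}$ match $\mathcal{H}_{(E,w)}$. This identification is the main point to verify carefully. It reduces to checking that shift-invariance together with heredity and saturation in $\overline E$ corresponds to heredity and saturation in $E$, which follows directly from the definition of the covering graph.

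Finally, Lemma~\ref{hfgyhf} identifies $\mathcal{H}_{(E,w)}$ with the join-semilattice of ideals of $L_\K(E,w)$ generated by vertices, and likewise for $F$. Composing the isomorphisms
\[
\mathcal{L}_{\text{ver}}(L_\K(E,w))\cong \mathcal{H}_{(E,w)}\cong \mathcal{L}^{\mathbb{Z}}(T_{(E,w)})\cong \mathcal{L}^{\mathbb{Z}}(T_{(F,w)})\cong \mathcal{H}_{(F,w)}\cong \mathcal{L}_{\text{ver}}(L_\K(F,w)),
\]
where $\mathcal{L}^{\mathbb{Z}}$ denotes the lattice of $\mathbb{Z}$-order ideals, gives the required join-semilattice isomorphism. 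The step I expect to be the main obstacle is the first one: justifying that $\phi$ carries positive cone onto positive cone, which is where cancellativity and the order-isomorphism hypothesis are both needed.
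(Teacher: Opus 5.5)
Your proposal is correct and follows the paper's proof. Both arguments restrict the isomorphism to a $\mathbb{Z}$-isomorphism $T_{(E,w)}\cong T_{(F,w)}$ using Corollary~\ref{cor:smashprod} and cancellativity, pass to $\mathbb{Z}$-order ideals, identify those with hereditary saturated subsets via Proposition~\ref{goodviasm}(5), and finish with Lemma~\ref{hfgyhf}. Your covering-graph check of shift-invariant hereditary saturated subsets, and your remark that $\phi^{-1}$ must also be order preserving (which the paper assumes implicitly), only supply details the paper leaves unstated.
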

\begin{proof}
By Proposition~\ref{goodviasm}(2), and Corollary \ref{cor:smashprod}, the isomorphism of $K^{\gr}_0$-groups induces a $\mathbb Z$-isomorphism $T_{(E,w)}\cong T_{(F,w)}$. Thus the lattice of $\mathbb{Z}$-order ideals of these monoids are isomorphic. By Proposition~\ref{goodviasm}, it follows that the lattices of hereditary and saturated subsets of $E$ and $F$ are isomorphic. Now Lemma~\ref{hfgyhf} implies that the semilattice of ideals of $L_\K(E,w)$ and $L_\K(F,w)$  
generated by vertices are isomorphic, thus finishing the proof. 
\end{proof}

\section{\texorpdfstring{$K_0^{\mathrm{gr}}$}{K0gr} is a full functor}\label{lifting}

The main aim of this section is to prove the following lifting theorem, which gives that Conjecture~\ref{conjalg}(1) holds for vertex-weighted graphs (Theorem \ref{weightedlift}), and show that the graded Grothendieck group $K_0^{\gr}$ classifies the Leavitt algebras $L_\K(n,n+k)$ (Theorem \ref{thm:classify-gLAs}).
\medskip

\begin{thm}\label{weightedlift}
Let $(E,w)$ and
$(F,w)$ be finite vertex-weighted graphs and $\K$ a field.  For any order preserving $\Z[x,x^{-1}]$-module homomorphism $\phi: K^{\gr}_0(L_\K(E,w)) \rightarrow K^{\gr}_0(L_\K(F,w))$ with 
$\phi([L_\K(E,w)])=L_\K(F,w)$, there exists a unital $\mathbb Z$-graded $\K$-homomorphism $\psi: L_\K(E,w) \rightarrow L_\K(F,w)$ such that $K^{\gr}_0(\psi) = \phi$.
\end{thm}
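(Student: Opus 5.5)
Via Corollary~\ref{cor:smashprod}, an order preserving $\Z[x,x^{-1}]$-homomorphism $\phi$ restricts to a $\mathbb Z$-monoid homomorphism $\phi\colon T_{(E,w)}\to T_{(F,w)}$ with $\phi(1_E)=1_F$. Since $T_{(E,w)}$ is cancellative (Proposition~\ref{goodviasm}(2)), it is the positive cone, so this restriction loses nothing. The strategy is to realise $L_\K(E,w)$ as a graded Bergman-type universal algebra and use its universal property, following the graded Bergman machinery of \cite{HazLiP}.

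\textbf{Step 1: extract the data of $\phi$ on generators.} For each $v\in E^0$ write
\[
\phi(v)=\sum_{j}x_j(n_j),\qquad x_j\in F^0,\ n_j\in\Z.
\]
Take the graded projective $L_\K(F,w)$-module
\[
P_v:=\bigoplus_j L_\K(F,w)x_j(-n_j),
\]
with the shift sign convention chosen to match ${}^1[P]=[P(1)]$. Then $P_v$ is graded and finitely generated projective, and $[P_v]=\phi(v)$.

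\textbf{Step 2: choose matrix units for the unit.} Since $\sum_v\phi(v)=1_F=[L_\K(F,w)]$ and $\V^{\gr}$ detects graded isomorphism classes, there is a graded isomorphism
\[
\bigoplus_v P_v\cong L_\K(F,w).
\]
Hence there are orthogonal homogeneous idempotents $p_v\in L_\K(F,w)_0$ with $\sum_v p_v=1$ and $L_\K(F,w)p_v\cong_{\gr}P_v$.

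\textbf{Step 3: realise the defining relations.} For each $v\in E^0_{\reg}$ the relation $w(v)v=\sum_{e\in s^{-1}(v)}r(e)(1)$ in $T_{(E,w)}$, together with the fact that $\phi$ is a $\Z$-homomorphism, gives
\[
[P_v^{w(v)}]=\Big[\bigoplus_{e\in s^{-1}(v)}P_{r(e)}(1)\Big]
\]
in $\V^{\gr}(L_\K(F,w))$. So there is a graded isomorphism between these two modules. Such an isomorphism and its inverse are given by a row/column pair of homogeneous matrices whose entries are degree-$\pm1$ elements of $p_vL_\K(F,w)p_{r(e)}$ and $p_{r(e)}L_\K(F,w)p_v$. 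The degree shift by $1$ forces the degrees to match $\deg e_i=1$ and $\deg e_i^*=-1$. Concretely, I would write down elements $\epsilon_i\in p_vL_\K(F,w)_1p_{r(e)}$ and $\epsilon_i^*\in p_{r(e)}L_\K(F,w)_{-1}p_v$, for $e\in s^{-1}(v)$ and $1\le i\le w(v)$, such that composing the two maps gives the identity both ways. These composites are exactly relations (iii) and (iv) of Definition~\ref{weighteddef}.

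\textbf{Step 4: define $\psi$ and check $K_0^{\gr}(\psi)=\phi$.} Set $\psi(v)=p_v$, $\psi(e_i)=\epsilon_i$ and $\psi(e_i^*)=\epsilon_i^*$. Relations (i)--(iv) hold by construction, so $\psi$ is a well-defined unital graded homomorphism. Then
\[
K_0^{\gr}(\psi)([L_\K(E,w)v(n)])=[L_\K(F,w)p_v(n)]=\phi(v(n)).
\]
Since the classes $v(n)$ generate $T_{(E,w)}$, this gives $K_0^{\gr}(\psi)=\phi$.

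\textbf{Main obstacle.} The main obstacle is Step~3, specifically matching the matrix shape. Relations (iii) and (iv) say precisely that
\[
L_\K(E,w)v^{w(v)}\cong\bigoplus_{e}L_\K(E,w)r(e)(1)
\]
via the $w(v)\times|s^{-1}(v)|$ matrix $(e_i)$. This is the graded version of Bergman's universal isomorphism $P^{w(v)}\cong\bigoplus Q_e$, so any graded isomorphism between the target modules provides entries of the right degrees. I would justify this cleanly by invoking the graded Bergman construction of \cite{HazLiP}: $L_\K(E,w)$ is the graded universal algebra adjoining such isomorphisms to $\K^{E^0}$, and maps out of it correspond exactly to choices as in Steps~2--3.
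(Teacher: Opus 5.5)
Your proposal is correct and is essentially the paper's argument. The paper likewise uses Corollary~\ref{cor:smashprod} and cancellativity to get a pointed $\mathbb Z$-monoid map $T_{(E,w)}\to\mathcal V^{\gr}(L_\K(F,w))$, splits $1$ into degree-zero orthogonal idempotents, turns each relation $w(v)v=\sum_{e\in s^{-1}(v)}r(e)(1)$ into a graded isomorphism, and concludes by universality (Theorem~\ref{arnonevas}, proved for any unital graded target via the graded Bergman construction over $R=\prod_{E^0}\K$). Your Steps~3--4 in fact make the appeal to Bergman unnecessary: relations (iii)--(iv) of Definition~\ref{weighteddef} say precisely that the two matrices with degree $\pm 1$ entries are mutually inverse, so $\psi$ is well defined directly from the presentation of $L_\K(E,w)$, and the argument works verbatim for any unital $\mathbb Z$-graded target $A$.
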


In order to prove this theorem, we need to recall the graded version of Bergman machinery developed in \cite{HazLiP}. 
In that work, Bergman's universal constructions were extended to the setting of graded rings. It was shown that any conical $\Gamma$-monoid can be realised as a graded non-stable $K$-theory of a $\Gamma$-graded ring, which satisfies a ``weak" universal property. We collect here the results from \cite{HazLiP} that will be needed in the subsequent sections.

\begin{thm} \label{thm-1} Let $R$ be a $\Gamma$-graded $\K$-algebra. Suppose that $M$ is a $\Gamma$-graded $R$-module and $P$ a graded finitely generated projective $R$-module. Then there exists a $\Gamma$-graded $R\text{-ring}_{\K}$, $S$, with a universal graded module homomorphism $f: S\otimes_R M \rightarrow{} S\otimes_R P$; that is, given any $\G$-graded $R\text{-ring}_{\K}$, $T$, and any graded $T$-module homomorphism $g:T\otimes_R M\rightarrow{} T\otimes_R P$, there exists a unique graded homomorphism $S\rightarrow{} T$ of $R\text{-rings}_{\K}$ such that $g=T\otimes_S f$. 
\end{thm}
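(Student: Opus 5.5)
This is the graded analogue of Bergman's theorem that adjoins a universal module homomorphism, and I would prove it by adapting Bergman's construction in \cite{bergman74} degreewise.

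\textbf{Step 1: reduce to free modules.} Since $P$ is graded finitely generated projective, $P$ is a graded direct summand of a graded free module
\[
F=\bigoplus_{k=1}^{n} R(\delta_k).
\]
So there is a homogeneous idempotent $\epsilon\in \operatorname{End}_R(F)$ of degree zero with $P=F\epsilon$. Choose a presentation of $M$ by homogeneous generators $\{m_j\}_{j\in J}$, with $\deg m_j=\gamma_j$, and homogeneous relations $\sum_j r_{lj} m_j=0$.

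Giving a graded $T$-module map $g\colon T\otimes_R M\to T\otimes_R P$ is then the same as choosing, for each $j$:
\begin{itemize}
\item a row $(t_{j1},\dots,t_{jn})\in T^n$, with each $t_{jk}$ homogeneous of degree $\gamma_j-\delta_k$ (with the paper's shift convention);
\item such that the row is fixed by right multiplication by the matrix of $\epsilon$;
\item and such that the rows are killed by the relations, i.e.\ $\sum_j r_{lj}\, t_{jk}=0$ for all $l,k$.
\end{itemize}

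\textbf{Step 2: build $S$ by generators and relations.} Take the free product (coproduct of $R$-rings$_\K$) of $R$ with the free $\K$-algebra on symbols $x_{jk}$, and declare $\deg x_{jk}=\gamma_j-\delta_k$. This coproduct is $\Gamma$-graded because $R$ and the free algebra are, and the coproduct of graded $\K$-algebras carries a natural grading.

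Form $S$ as the quotient by the two families of relations from Step 1:
\begin{itemize}
\item $(x_{jk})_k\,\epsilon=(x_{jk})_k$, so that the rows land in $P$;
\item $\sum_j r_{lj}x_{jk}=0$ for all $l,k$.
\end{itemize}
All these relations are homogeneous, since $\epsilon$ has degree zero and the $r_{lj}$ and $x_{jk}$ were chosen homogeneous compatibly. Hence the ideal they generate is graded and $S$ is a $\Gamma$-graded $R$-ring$_\K$. Define $f\colon S\otimes_R M\to S\otimes_R P$ by $1\otimes m_j\mapsto (x_{jk})_k$. By construction $f$ is well defined and graded.

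\textbf{Step 3: universality.} Given $T$ and $g$ as in the statement, use the dictionary of Step 1 to read off homogeneous elements $t_{jk}$ from $g$. The universal property of the free product and the quotient yields a unique $R$-ring map $S\to T$ with $x_{jk}\mapsto t_{jk}$. This map is graded because degrees match on generators. Both $T\otimes_S f$ and $g$ agree on the generators $1\otimes m_j$, so $g=T\otimes_S f$. Uniqueness follows because any such ring map is forced on the $x_{jk}$.

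\textbf{Main obstacle.} The delicate point is the bookkeeping of shifts and degrees in Step 1. One must confirm that graded homomorphisms $T\otimes M\to T\otimes P$ correspond exactly to homogeneous families of the stated degrees. One must also confirm that the relations, including the idempotent condition for the summand $P$, are homogeneous, so that all universal constructions stay inside the category of $\Gamma$-graded rings. Once this dictionary is set up correctly, the rest is Bergman's ungraded argument verbatim.
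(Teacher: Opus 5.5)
Your construction is correct, but it takes a different route from the one behind the statement. The paper gives no proof of this theorem; it imports it from \cite{HazLiP}, where, following Bergman, $S$ is built without any choices. There one uses that $P$ is graded finitely generated projective to get a graded isomorphism $S\otimes_R P\cong \mathrm{Hom}_R(P^*,S)$, where $P^*=\mathrm{Hom}_R(P,R)$ is graded because $P$ is finitely generated. Graded $S$-maps $S\otimes_R M\to S\otimes_R P$ then correspond to degree-preserving $R$-bimodule maps $M\otimes_\K P^*\to S$. Hence $S$ is simply the graded tensor ring of the $R$-bimodule $M\otimes_\K P^*$.

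You instead fix a graded presentation of $M$ and an idempotent $\epsilon$ with $P=F\epsilon$, and write $S$ by generators and relations. This is exactly a presentation of that tensor ring. Your $x_{jk}$ corresponds to $m_j\otimes \pi_k|_P$, where $\pi_k\colon F\to R$ is the $k$th coordinate functional. Your two families of relations are precisely the bimodule relations coming from the presentation of $M$ and from $P^*\cong \epsilon F^*$. The universal property makes the result independent of your choices.

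\begin{itemize}
\item The canonical tensor-ring form buys functoriality. It also puts $S$ in a class to which Bergman-type structure results apply, of the kind behind Theorem~\ref{thmbergV}.
\item Your explicit form is what the paper actually uses in the proof of Theorem~\ref{arnonevas}. There $S_1$ is identified with $L_\K(X_1,w)$ by reading off the generators $e_{ij}$ of the matrix $Y$.
\end{itemize}

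One bookkeeping fix is needed. The paper's shift convention is $N(\delta)_\gamma=N_{\gamma+\delta}$. This is what makes right multiplication by the degree-$1$ matrix $Y$ a degree-preserving map $\overline P\to\overline{Q(1)}$. So you should set $\deg x_{jk}=\gamma_j+\delta_k$, not $\gamma_j-\delta_k$. With $\epsilon_{kk'}\in R_{\delta_{k'}-\delta_k}$, both families of relations are then homogeneous, as you claim.
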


\begin{thm}
\label{thm-2}
Let $R$ be a $\G$-graded $\K$-algebra, $M$ a graded $R$-module, $P$ a graded projective $R$-module, and $f:M\xrightarrow{} P$ a graded module homomorphism.  Then there exists a $\G$-graded $R\text{-ring}_{\K}$ $S$ such that $S\otimes_R f=0$, and $S$ is universal for this property: Given any $\G$-graded $R\text{-ring}_{\K}$ $T$ with $T\otimes_R f=0$, there exists a unique graded homomorphism of $R\text{-rings}_{\K}$, $S\xrightarrow{} T$.
\end{thm}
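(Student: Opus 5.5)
We construct $S$ explicitly as a graded quotient of $R$. The idea is that, for a free target, killing $f$ after base change amounts to killing finitely many families of homogeneous elements of $R$, namely the coordinates of $f$.

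First we reduce to the case of a graded free target. Since $P$ is graded projective, it is a graded direct summand of a graded free module: there are a graded free module $F=\bigoplus_{j\in J} R(\delta_j)$ with $\delta_j\in\G$, and graded maps $\iota\colon P\to F$ and $\pi\colon F\to P$ with $\pi\iota=\mathrm{id}_P$. This is the standard fact that a graded module is graded projective if and only if it is projective in $R\text{-}\mathrm{Gr}$, and $R\text{-}\mathrm{Gr}$ has the shifted modules $R(\delta)$ as projective generators. For any graded $R\text{-ring}_{\K}$ $T$, the map $T\otimes_R\iota$ is a split monomorphism. Hence $T\otimes_R f=0$ if and only if $T\otimes_R(\iota f)=0$. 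So it suffices to prove the theorem for $f':=\iota f\colon M\to F$.

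Next we write $f'$ in coordinates. Let $\{\epsilon_j\}_{j\in J}$ be the standard homogeneous basis of $F$, with $\deg\epsilon_j=-\delta_j$ (or whatever the shift convention dictates). Then $f'(m)=\sum_j f_j(m)\,\epsilon_j$, where each $f_j\colon M\to R(\delta_j)$ is a graded homomorphism. So for homogeneous $m$, every $f_j(m)$ is a homogeneous element of $R$. Let $I$ be the two-sided ideal of $R$ generated by
\[
X:=\{f_j(m)\mid j\in J,\ m\in M \text{ homogeneous}\}.
\]
Because $X$ consists of homogeneous elements, $I$ is a graded ideal, so $S:=R/I$ is a $\G$-graded $R\text{-ring}_{\K}$ through the canonical surjection.

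Now we verify the two required properties. For any graded $R\text{-ring}_{\K}$ $T$ with structure map $\rho\colon R\to T$, we have $T\otimes_R F\cong\bigoplus_j T(\delta_j)$, and $T\otimes_R f'$ sends $t\otimes m$ to $\sum_j t\,\rho(f_j(m))\,\epsilon_j$. Taking $t=1$ shows that $T\otimes_R f'=0$ if and only if $\rho(X)=0$, that is, if and only if $\rho(I)=0$. Applied to $T=S$, this gives $S\otimes_R f=0$. Applied to an arbitrary $T$ with $T\otimes_R f=0$, it shows that $\rho$ factors through $R/I=S$. The induced map $S\to T$ is graded because $\rho$ is, and it is unique because $R\to S$ is surjective.

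The only delicate point is the first step: working inside the graded category, so that the summand $P\subseteq F$ and the coordinate maps $f_j$ are homogeneous. The construction itself depends on the chosen splitting, but the universal property does not, so this choice is harmless. Nothing here requires $P$ to be finitely generated.
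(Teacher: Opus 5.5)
Your argument is correct, and it is essentially the standard Bergman construction that the paper quotes from \cite{HazLiP} without giving a proof. There, too, $S=R/I$, where $I$ is the graded two-sided ideal generated by the homogeneous coordinates of $f$ after $P$ is realised as a graded direct summand of a graded free module.

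One small refinement: $I$ does not actually depend on the chosen splitting. It coincides with the ideal generated by all $\phi(f(m))$, with $m\in M$ homogeneous and $\phi\colon P\to R(\delta)$ a graded homomorphism, which gives a choice-free description of $S$.
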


For graded finitely generated  projective $R$-modules $P$ and $Q$, we can adjoin a universal graded isomorphism between $\overline{P}$ and $\overline{Q}$ by first freely adjoining a graded map $h: \overline{P}\xrightarrow[]{}\overline{Q}$, and $\overline{h}: \overline{Q}\xrightarrow[]{}\overline{P}$ using Theorem~\ref{thm-1} and then using Theorem~\ref{thm-2} to force that $1-h\overline{h}=0$ and $1-\overline{h}h=0$. We denote the resulting $\Gamma$-graded $R\text{-ring}_{\K}$ by \[S:= R\langle h, h^{-1} \mid \overline{P}\cong_{\gr} \overline{Q}\rangle.\]

 When the field $\K$ is concentrated in degree zero, we can compute the non-stable graded $K$-theory of the graded ring $S$ (\cite[\S 7]{HazLiP}). 
 
\begin{thm}\label{thmbergV}
Let $R$ be a $\G$-graded $\K$-algebra, where $\K$ is concentrated in degree zero. Let $P$ and $Q$ be nonzero graded finitely generated  projective $R$-modules and $S:= R\langle h, h^{-1} \mid \overline{P}\cong_{\gr} \overline{Q}\rangle.$
Then is a $\Gamma$-monoid isomorphism \[\V^{\gr}(S) \cong \V^{\gr}(R) \big /\big \langle [P]=[Q]\big \rangle\] given by tensoring. 
\end{thm}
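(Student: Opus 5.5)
The plan is to reduce the statement to Bergman's ungraded computation \cite{bergman74} by passing to the smash product $R\#\G$. Write $S = R\langle h, h^{-1}\mid \overline P\cong_{\gr}\overline Q\rangle$. By the isomorphism of categories (\ref{vast5}), graded $R$-modules correspond to unital $R\#\G$-modules. Under this correspondence, the shift $M\mapsto M(\gamma)$ becomes an automorphism of $R\#\G\-\mathrm{Mod}$, and graded projectives correspond to projectives. Hence $\V^{\gr}(R)\cong \V(R\#\G)$ as $\G$-monoids, and likewise $\V^{\gr}(S)\cong\V(S\#\G)$. It therefore suffices to identify $S\#\G$ with a Bergman-type universal ring built from $R\#\G$.

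\textbf{Step 1: identify $S\#\G$.} A graded homomorphism $h\colon \overline P\to\overline Q$ of degree $0$ is the same datum as a compatible family of homomorphisms $h_\gamma\colon P(\gamma)\to Q(\gamma)$, $\gamma\in\G$, between the corresponding $R\#\G$-modules, where each $h_\gamma$ is the shift of $h_0$. I would show that $S\#\G$ satisfies the universal property of
\[
(R\#\G)\big\langle h_\gamma, h_\gamma^{-1}\ (\gamma\in\G)\;\big|\; \psi(P(\gamma))\cong\psi(Q(\gamma))\big\rangle ,
\]
that is, the ring obtained from $R\#\G$ by freely adjoining isomorphisms between $\psi(P(\gamma))$ and $\psi(Q(\gamma))$ for all $\gamma$ simultaneously. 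The proof would compare the universal properties in Theorems~\ref{thm-1} and \ref{thm-2} with their ungraded counterparts, using that $-\#\G$ sends graded $R$-rings to $(R\#\G)$-rings with compatible $\G$-action, and conversely.

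The hypothesis that $\K$ is concentrated in degree zero enters here. It guarantees that the $\K$-algebra structure lives in $R_0$, so the $\K$-linearity in the universal properties is compatible with the passage to $R\#\G$. The coefficient ring of $R\#\G$ stays $\K$, and adjoining the maps does not create extra scalars in nonzero degrees.

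\textbf{Step 2: apply Bergman's computation.} Bergman's theorems (Theorem 5.1 for freely adjoining a map and Theorem 5.2 for imposing relations, in \cite{bergman74}) show two things. Freely adjoining a homomorphism between finitely generated projectives leaves $\V$ unchanged. Imposing $1-h\bar h=0$ and $1-\bar h h=0$ then yields $\V\big/\langle [P]=[Q]\rangle$. Bergman states these for a single map over a unital $\K$-algebra. Since $R\#\G$ only has local units and we adjoin infinitely many isomorphisms, I would write the construction as a directed union over finite subsets $\Lambda\subseteq\G$ and over corner rings $e(R\#\G)e$, where $e=\sum_{\gamma\in\Lambda}P_\gamma$. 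On each such unital corner only finitely many isomorphisms are adjoined, and Bergman applies one isomorphism at a time. Using that $\V$ commutes with direct limits, we obtain
\[
\V(S\#\G)\cong \V(R\#\G)\big/\big\langle [P(\gamma)]=[Q(\gamma)] \mid \gamma\in\G\big\rangle .
\]

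\textbf{Step 3: translate back.} The right-hand side of the last display is exactly the $\G$-monoid quotient $\V^{\gr}(R)/\langle [P]=[Q]\rangle$, since the congruence generated by a single $\G$-relation is the ordinary congruence generated by all of its shifts. The identification is induced by $S\otimes_R-$, because at every stage Bergman's isomorphism is given by extension of scalars. It is also $\G$-equivariant, since the shift functors commute with tensoring.

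I expect the main obstacle to be Step 1: verifying rigorously that the graded universal constructions commute with $-\#\G$, in particular that universality of $S$ among graded $R$-rings corresponds to universality of $S\#\G$ among $(R\#\G)$-rings. The non-unital setting of $R\#\G$ in Step 2 is a secondary, more technical difficulty. My first attempt would be to check the universal property directly on generators: a graded $R$-ring map $S\to T$ gives a map $S\#\G\to T\#\G$, and conversely a map out of $(R\#\G)\langle h_\gamma,h_\gamma^{-1}\rangle$ that respects the $\G$-action comes from a graded map by restricting to the corner $P_0(-)P_0$.
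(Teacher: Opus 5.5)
The paper gives no argument for this statement. It quotes it from \cite[\S 7]{HazLiP} as the $\V^{\gr}$-computation that goes with the graded constructions of Theorems~\ref{thm-1} and \ref{thm-2}, and then uses it as a black box in the proof of Theorem~\ref{arnonevas}. So you are not reconstructing an in-paper proof. You are deriving the cited result from Bergman's ungraded theorems through $\V^{\gr}(R)\cong\V(R\#\G)$, and that route is sound in outline. It has two merits. First, it shows where the hypothesis on $\K$ is used: because $\K$ sits in degree zero it acts centrally on $R\#\G$, so Bergman's theorems over the same field apply to the unital corners $e(R\#\G)e$. Second, it is the mechanism the paper itself uses in its own special case. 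For $R=\prod_{E^0}\K$ and the pairs of modules in the proof of Theorem~\ref{arnonevas}, adjoining all shifted isomorphisms to $R\#\mathbb Z\cong\bigoplus_{\overline E^0}\K$ yields $L_\K(\overline E,\overline w)$. Your Step~1 then specialises to Lemma~\ref{lm:smashprod}, and Steps~2--3 to Corollary~\ref{cor:smashprod}. The price is that Step~1 must be proved in general and the non-unital bookkeeping done by hand; citing the graded machinery sidesteps both.

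The one step that fails as written is the converse direction in Step~1. The corner $P_0(S\#\G)P_0$ is $S_0P_0\cong S_0$. Restricting to $P_0(-)P_0$ therefore only sees degree zero and cannot produce the graded $R$-ring you need to invoke the universal property of $S$. Also, the universal property of $B:=(R\#\G)\langle h_\gamma,h_\gamma^{-1}\rangle$ quantifies over all $(R\#\G)$-rings, so checking equivariant targets does not show that $S\#\G$ has it.

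Argue with mutually inverse maps instead.
\begin{enumerate}
\item Universality of $B$ gives a map $B\to S\#\G$.
\item Universality also extends the shift automorphisms $\sigma_\delta(rP_\gamma)=rP_{\gamma\delta}$ of $R\#\G$ to a $\G$-action on $B$, because they permute the pairs $\psi(P(\gamma)),\psi(Q(\gamma))$ and hence the $h_\gamma$.
\item Now recover a graded ring: set $A:=\bigoplus_{\alpha\in\G}P_\alpha BP_0$, with product $a\cdot b:=\sigma_\beta(a)\,b$ for $b\in P_\beta BP_0$. This is a unital graded $R$-ring via $r\mapsto rP_0$.
\item The assignment $aP_\beta\mapsto\sigma_\beta(a)$ gives $A\#\G\cong B$. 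Under this identification $h_0$ becomes a degree-zero isomorphism $A\otimes_RP\cong A\otimes_RQ$.
\item Universality of $S$ yields $S\to A$, hence $S\#\G\to A\#\G\cong B$. Both composites fix generators.
\end{enumerate}

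In Step~2, compute $\V(B)$ as $\varinjlim_\Lambda\V(B_\Lambda)$ along the canonical non-unital maps between the corner Bergman rings $B_\Lambda$. Do not identify $B_\Lambda$ with $e_\Lambda Be_\Lambda$. The latter is in general strictly larger, since it contains products passing through idempotents $P_\gamma$ with $\gamma\notin\Lambda$. For example, with $\Lambda=\{0\}$ for the covering graph of a rose, $B_\Lambda=\K$ while $e_\Lambda Be_\Lambda$ is infinite-dimensional.
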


We are now in a position to prove Theorem~\ref{weightedlift}. In fact, by using the graded Bergman construction, we obtain a more general result.
 
\begin{thm}\label{arnonevas}
Let $E$ be a finite vertex-weighted graph, $\K$ a field, and $A$ a unital $\mathbb Z$-graded $\K$-algebra. Let $\phi: T_{(E,w)} \rightarrow \mathcal V^{\gr}(A)$ be a $\mathbb Z$-monoid homomorphism with  
$\phi(1_E)=[A]$. Then there exists a unital $\mathbb Z$-graded $\K$-algebra homomorphism $\psi: L_\K(E,w) \rightarrow A$ such that $\mathcal V^{\gr}(\psi) = \phi$.  

\end{thm}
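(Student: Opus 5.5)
Our plan is to realise $L_\K(E,w)$ itself as a graded Bergman construction and then use its weak universal property. Write $E^0=\{v_1,\dots,v_m\}$ and let $R=\K^m=\bigoplus_{v\in E^0}\K v$, graded trivially (concentrated in degree $0$), so that $\V^{\gr}(R)$ is the free $\mathbb Z$-monoid on the generators $v(i)$, $v\in E^0$, $i\in\mathbb Z$. For each regular vertex $v$ we use Theorem~\ref{thm-1} and Theorem~\ref{thm-2} to adjoin a universal graded isomorphism
\[
\overline{(Rv)^{w(v)}}\;\cong_{\gr}\;\overline{\textstyle\bigoplus_{e\in s^{-1}(v)} Rr(e)(-1)},
\]
doing this successively over all regular vertices (finitely many, since $E$ is finite). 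The shift $(-1)$ is fixed so that the matrix entries of the universal isomorphism and its inverse sit in degrees $1$ and $-1$. Call the resulting graded $R$-ring $S$. Iterating Theorem~\ref{thmbergV} gives
\[
\V^{\gr}(S)\cong \V^{\gr}(R)\big/\big\langle w(v)v(i)=\textstyle\sum_{e\in s^{-1}(v)} r(e)(i+1)\big\rangle = T_{(E,w)},
\]
where the exact sign convention on the shift is matched to~\eqref{talmon}.

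Next we identify $S$ with $L_\K(E,w)$. A graded isomorphism $h:(Rv)^{w(v)}\to\bigoplus_e Rr(e)(-1)$ together with its inverse amounts to a row/column of elements. We write the entries of $h$ as $e_i^*$ and those of $h^{-1}$ as $e_i$, indexed by $e\in s^{-1}(v)$ and $1\le i\le w(v)$, with $e_i\in vSr(e)$ of degree $1$ and $e_i^*\in r(e)Sv$ of degree $-1$. The identities $h^{-1}h=1$ and $hh^{-1}=1$ then translate exactly into relations (iii) and (iv) of Definition~\ref{weighteddef}, and the corner relations give (i) and (ii). Hence the universal property of $S$ yields mutually inverse graded homomorphisms $S\leftrightarrows L_\K(E,w)$, so $S\cong_{\gr}L_\K(E,w)$, compatibly with the identification $\V^{\gr}(L_\K(E,w))\cong T_{(E,w)}$ of Corollary~\ref{cor:smashprod}.

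Now take $\phi:T_{(E,w)}\to\V^{\gr}(A)$ with $\phi(1_E)=[A]$. Each $\phi(v)$ is the class of a graded finitely generated projective $A$-module $P_v$, and $\bigoplus_v P_v\cong_{\gr}A$. So there are orthogonal homogeneous idempotents $p_v$ of degree $0$ in $A$ with $\sum_v p_v=1$ and $Ap_v\cong_{\gr}P_v$. This defines a unital graded map $R\to A$, $v\mapsto p_v$, making $A$ a graded $R$-ring$_\K$ with $A\otimes_R Rv\cong Ap_v$. The relation $w(v)\phi(v)=\sum_e\phi(r(e))(1)$ in $\V^{\gr}(A)$ provides a graded isomorphism $A\otimes_R(Rv)^{w(v)}\cong A\otimes_R\bigoplus_e Rr(e)(-1)$. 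By the universal property of $S$ (Theorems~\ref{thm-1} and~\ref{thm-2}), we get a graded $R$-ring homomorphism $\psi:S\cong L_\K(E,w)\to A$. It is unital because $\psi(1)=\sum_v p_v=1$. Since $\V^{\gr}(\psi)$ sends $[Rv(i)]$ to $[Ap_v(i)]=\phi(v(i))$, and the $v(i)$ generate $T_{(E,w)}$ as a monoid, we conclude $\V^{\gr}(\psi)=\phi$.

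The main obstacle is the identification $S\cong_{\gr}L_\K(E,w)$ with the correct degree and shift conventions. In particular, the orientation of $h$ versus $h^{-1}$ has to be checked so that relation~(iii), which is indexed by the $w(v)$ copies, and relation~(iv), indexed by the edges, come out exactly, and so that the monoid relation matches~\eqref{talmon} rather than its inverse shift. The remaining steps, namely choosing the idempotents $p_v$ and computing the $K$-theory via Theorem~\ref{thmbergV}, are bookkeeping.
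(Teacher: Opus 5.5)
Your proposal is correct and follows essentially the same route as the paper: realise $L_\K(E,w)$ as the iterated graded Bergman ring over $R=\prod_{E^0}\K$ (adjoining a universal isomorphism $\overline{P}\cong\overline{Q(1)}$ at each non-sink vertex), compute $\V^{\gr}$ via Theorem~\ref{thmbergV}, and obtain $\psi$ from the universal property after splitting $1_A$ into degree-zero orthogonal idempotents $p_v$ with $Ap_v$ representing $\phi(v)$. The only fix needed is the orientation you already flagged: $h$ acts on left modules by right multiplication, so its entries lie in $vSr(e)$ and are the $e_i$ (degree $1$), while $h^{-1}$ has entries $e_i^*$; moreover, under the usual convention $M(k)_d=M_{d+k}$ the target shift must be $(1)$, as in the paper, not $(-1)$, and this is what makes the relation come out as $w(v)v(i)=\sum_{e\in s^{-1}(v)} r(e)(i+1)$.
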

\begin{proof}
Consider the semisimple $\K$-algebra $R=\prod_{E^0}\K$, the product of $|E^0|$-copies  of the field $\K$ as a $\mathbb Z$-graded algebra concentrated in degree zero.  
Then 
\begin{equation}\label{timemm}
\mathcal V^{\gr}(R) \cong \prod_{E^0} \mathcal V^{\gr}(\K) \cong \big \langle v(i)\mid v\in E^0, \, i \in \mathbb Z \big \rangle,
\end{equation}
with the $\mathbb Z$-action ${}^n v(i)=v(i+n)$, for $i,n \in \mathbb Z$. 
Denote $p_v(i)$, where $v\in E^0$ and $i\in \mathbb Z$, the graded finitely generated projective $R$-module with $\K(i)$ appears in $v$-th component of $R$ and zero elsewhere. Throughout, we write $p_v$ for $p_v(0)$. Note that $\bigoplus_{v\in E^0}p_v \cong R$ as graded $R$-module and the isomorphism classes $[p_v(i)]$ correspond to $v(i)$ in (\ref{timemm}).

Let $\{v_1,\dots,v_m\}$ be the set of all the vertices in the weighted graph $E$ which emit edges.  Consider the following finitely generated graded projective $R$-modules: 
\[P:=\bigoplus_{w(v_1)}p_{v_1}\,\,   \text{ and } \, \,  Q:=\bigoplus_{\{e \in E^1 \mid s(e)=v_1 \}}p_{r(e)}.\]  
We construct the graded algebra
\begin{equation}\label{sequi}
    S_1:=R\big\langle h_{v_1},h_{v_1}^{-1}:\overline{P}\cong \overline{Q(1)}\big\rangle.
    \end{equation}

First using  Theorem~\ref{thm-1}, one constructs a graded $\K$-algebra $S_1'$ and universal homomorphisms $i:S_1' \otimes _{R} P\rightarrow S_1' \otimes _{R} Q(1)$ and $\overline i: S_1' \otimes _{R} Q(1)\rightarrow S_1' \otimes _{R} P$. Next we apply Theorem~\ref{thm-2}  to obtain the ring $S_1$, where extensions of $i$ and $\overline i$ (call them again by $i$ and $\overline i$) over $S_1$ give $1-i\overline i=0$ and $1-\overline i i=0$. Thus we have  $S_1=R \langle i,i^{-1}: \overline P \cong \overline {Q(1)}\rangle$ with a universal isomorphism \[i: \  \overline P:=S_1\otimes_{R} P \ \  \longrightarrow  \ \ \overline  {Q(1)}:=S_1\otimes_{R}Q(1).\] 

The proofs of Theorem~\ref{thm-1} and \ref{thm-2}  show that $S_1$ is $L_\K(X_1,w)$, where $X_1$ is a vertex-weighted graph with the same vertices as $E$ and where $v_1$ emits the same edges as in $E$ and other vertices do not emit any edges. Namely, if $\{ e_1,\dots,e_s\}$ is all the edges which are emitted from $v_1$ with $n=w(v_1)$ then the right multiplication by  the matrix  $Y=(e_{ij})_{1\leq j \leq s, 1\leq i\leq n}$, where $e_{ij}={(e_j)}_i$, gives the map 
\[i: \ \overline P=\bigoplus_{w(v_1)}S_1 v_1 \ \  \longrightarrow \ \  \overline{Q(1)} =\bigoplus_{\{e \in E^1 \mid s(e)=v_1 \}}S_1r(e)(1),\] while $X=(Y^*)^t$, where ${}^t$ is the transpose operation,  gives $i^{-1}$. Now Theorem~\ref{thmbergV} guarantees that $\V^{\gr}(S_1)$ is obtained from $\V^{\gr}(R)$ by adding the relation $[P]=[Q(1)]$.  Translating this to our setting, we get that $\V^{\gr}(S_1)$ is the $\mathbb Z$-monoid generated by the set $\{v(i) \mid v\in E^0, i \in \mathbb Z\}$ subject to the relation 
\[w(v_1) v_1=\sum_{\{e \in E^1 \mid s(e )=v_1 \}} r(e)(1).\] 

We repeat this process to cover the whole graph. To be precise, let $S_k=L_\K(X_k,w)$, $k\geq 1$, where $X_k$ is the graph with the same vertices as $E$, but only the first $k$ vertices $\{v_1,\dots,v_k\}$ emit structured edges. By induction, $\V^{\gr}(S_k)$ is a $\mathbb Z$-commutative monoid generated by $\{v(i) \mid v\in E^0, i \in \mathbb Z\}$ subject to the relation 
$w(v_j) v_j=\sum_{\{e\in E^1 \mid s(e)=v_j \}} r(\alpha)(1)$, where $1\leq j \leq k$. Then  $S_{k+1}=S_k \langle i,i^{-1}: \overline P \cong \overline{Q(1)}\rangle$ with $P=\textstyle{\bigoplus_{w(v_{k+1})}}S_{k} v_{k+1}$ and $Q =\textstyle{\bigoplus_{\{e \in E^1 \mid s(e)=v_{k+1}\}}}S_k r(e)$.  So one more application of Theorem~\ref{thmbergV} gives that  $\V^{\gr}(S_{k+1})$ is the monoid generated by all the vertices of $E$ subject to relations corresponding to $\{v_1,\dots,v_{k+1}\}$. Thus after repeating this process $m$ times we arrive at the monoid $\mathcal V^{\gr}(R)$ subject to exact same relations of Definition \ref{tal-wei-mon} and that 
\begin{equation}\label{gthfyyd}
    S_m \cong L_\K(E,w).
\end{equation}
Putting these together we have 
\[\mathcal V^{\gr}(L_\K(E,w)) \cong T_{(E,w)},\] 
where the graded $\K$-algebra homomorphism $R\rightarrow L_\K(E,w)$ induces 
 $\mathcal V^{\gr}(R) \rightarrow \mathcal V^{\gr}(L_\K(E,w)), [p_v] \mapsto [L_K(E,w)v]$.

Now suppose $\phi: T_{(E,w)} \rightarrow \mathcal V^{\gr}(A)$ is a pointed $\mathbb Z$-monoid homomorphism. Since $\phi(1_E)=\sum_{v\in E^0}\phi(v)=[A]$, we obtain a graded $A$-module isomorphism $A\cong \bigoplus_{v\in E^0} q_v$, where $q_v$'s are graded finitely generated $A$-modules with $\phi(v)=[q_v]$.   It follows that $A = \bigoplus_{v\in E^0} Ae_v$, where $e_v$'s are pairwise orthogonal idempotents of homogeneous degree zero in $A$, with $Ae_v \cong q_v$ as graded $A$-modules. Thus there is a natural graded $\K$-algebra homomorphism $\eta: R=\prod_{E^0}\K \rightarrow A$, making the ring $A$ a $\mathbb Z$-graded $R\text{-ring}_{\K}$. Since $A \otimes_R p_v \cong q_v$ (use $A \otimes_R R/I \cong A/AI$, for an ideal $I$ of $R$), we obtain the following commutative diagram of $\mathbb Z$-monoids:
 
\[
\begin{tikzcd}
\mathcal V^{\gr}(R)
\arrow[rr, "A \otimes_R -"]
\arrow[dr]
&&
\mathcal V^{\gr}(A)
\\
&
T_{(E,w)}
\arrow[ur, "\phi"']
\end{tikzcd}
\]
Thus 
\[\big[\bigoplus_{w(v)}q_v \big]= w(v)[q_v]=w(v)\phi(v)=\phi(w(v)v)=\phi\big(\sum_{e\in s^{-1}(v)}r(e)(1)\big)=\sum_{e\in s^{-1}(v)}[q_{r(e)}(1)],\] implying graded $A$-module isomorphisms  
\begin{align*}
\bigoplus_{w(v)}q_v &\cong \bigoplus_{e\in s^{-1}(v)}q_{r(e)}(1)\\
A\otimes_R \bigoplus_{w(v)} p_v &\cong A \otimes_R \bigoplus_{e\in s^{-1}(v)}p_{r(e)}(1),
\end{align*}
for vertices $v$ which are not sink. 

Since $L_\K(E,w)$ is the universal ring providing these isomoprhisms (see~\ref{sequi} and \ref{gthfyyd}), it follows that there is a graded $\K$-algebra homomorphism $\psi: L_\K(E,w) \rightarrow A$ such that the following diagram is commutative: 
\[
\begin{tikzcd}
\mathcal V^{\gr}(R)
\arrow[rr, "A\otimes_R -"]
\arrow[dr]
&&
\mathcal V^{\gr}(A)
\\
&
\mathcal V^{\gr}(L_\K(E,w))
\arrow[ur, "\phi = A \otimes_{L_\K(E,w)} -"']
\end{tikzcd}
\]
 This completes the proof.
\end{proof}

We are now in a position to establish the fullness of the graded Grothendieck group functor on the category of all weighted Leavitt path algebras.

\begin{proof}[Proof of Theorem~\ref{weightedlift}]
By Corollary \ref{cor:smashprod}, it follows that
$T_{(E,w)}\cong \mathcal V^{\gr}(L_\K(E,w))$. By Proposition~\ref{goodviasm}(2), $T_{(E,w)}$ is a cancellative monoid, and so 
the positive cone of $K^{\gr}_0(L_\K(E,w))$ is precisely $T_{(E,w)}$. Since $\phi$ is preordered, it induces a $\mathbb Z$-monoid homomorphism  $$\phi: \mathcal V^{\gr}(L_\K(E)) \rightarrow \mathcal V^{\gr}(L_\K(F)).$$ The result now follows from Theorem~\ref{arnonevas}, thus finishing the proof.
\end{proof}

We finish the paper by showing that the $K_0^{\gr}$-group is a complete invariant for Leavitt algebras. Note that in comparison, there is an order isomorphism $K_0(L_\K(n,n+k))\cong K_0(L_\K(n+l,n+k+l))$, for $n,k,l\in \mathbb N$, although $L_\K(n,n+k)\not \cong L_\K(n+l,n+k+l)$.

\begin{thm}\label{thm:classify-gLAs}
The graded Grothendieck group $K_0^{\gr}$ classifies Leavitt algebras $L_\K(n,n+k)$, where $n,k\in \mathbb N$. 
\end{thm}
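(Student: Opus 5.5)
The plan is to show that an order preserving $\Z[x,x^{-1}]$-module isomorphism $K_0^{\gr}(L_\K(n,n+k))\cong K_0^{\gr}(L_\K(n',n'+k'))$ sending $[L_\K(n,n+k)]$ to $[L_\K(n',n'+k')]$ forces $(n,k)=(n',k')$. The converse direction is trivial. As usual one should assume $n,k\geq 1$, since $k=0$ gives the degenerate matrix-algebra case, which can be treated separately.

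First I translate to talented monoids. By Corollary~\ref{cor:smashprod} and Proposition~\ref{goodviasm}(2), the positive cone is the cancellative $\mathbb Z$-monoid
\[T_{(n,k)}=\big\langle v(i),\, i\in\mathbb Z \mid (n+k)v(i)=nv(i+1)\big\rangle,\]
with order unit $v=v(0)$. So the isomorphism restricts to a pointed $\mathbb Z$-monoid isomorphism $T_{(n,k)}\cong T_{(n',k')}$ with $v\mapsto v'$. The group completion is then $G=\mathbb Z[x,x^{-1}]/\big((n+k)-nx\big)$, with $v(i)\mapsto x^i$. Here one must be careful about the convention for which side carries the shift, and I would fix it against the example preceding Proposition~\ref{goodviasm}.

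Next I extract numerical invariants, working directly in the monoid via the Confluence Lemma~\ref{aralem6} on the acyclic covering graph. The key computation is
\[m_0:=\min\{m\geq 1 \mid mv(0)\in \mathbb N\, v(1)\}.\]
An element $mv(0)$ flows only by replacing $n$ copies of $v(i)$ with $n+k$ copies of $v(i+1)$ (or the reverse orientation, per convention). Confluence then shows that $mv(0)=pv(1)$ holds exactly when $m$ is a multiple of $n$ and $p=(n+k)m/n$. Hence $m_0=n$, and the corresponding $p_0=n+k$.

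Both quantities are preserved by any pointed $\mathbb Z$-isomorphism, since $v(0)\mapsto v'(0)$ and the isomorphism commutes with the shift. Therefore $n=n'$ and $n+k=n'+k'$, which gives $(n,k)=(n',k')$.

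The main obstacle is the confluence computation. One has to verify that no other chain of flows, for example one passing through mixed levels, produces a smaller $m$. My plan is to order elements by their minimal level and argue that every flow step preserves the residue of the coefficient sum modulo the relevant quantities. Alternatively, since $T$ is cancellative, I would work in $G$ and use the rational embedding $v(i)\mapsto ((n+k)/n)^i$ when $\gcd(n,k)=1$. In general I would use the fact that $nv(i)$ is the only reducible pattern from a single level. Note that the gcd issue means that $G$ alone does not suffice; one needs to combine $n$ and $n+k$ rather than just their ratio, which is exactly why $m_0$ is computed in the monoid rather than in the group.
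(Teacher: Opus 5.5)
Your argument is correct for isomorphisms preserving the order unit, but it takes a different route from the paper.

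\textbf{The paper's route.} The paper never computes inside $T$. It pushes the $\mathbb Z$-isomorphism $T_{(n,n+k)}\cong T_{(n',n'+k')}$ down to the quotients by $x\sim {}^{1}x$ via Proposition~\ref{goodviasm}(3),(4), obtaining $M_{(n,n+k)}\cong M_{(n',n'+k')}$. It then quotes the classical fact that the finite monogenic monoid $\langle v\mid nv=(n+k)v\rangle$ is determined by its index and period. This route needs no order unit. However, it only works when $M$ is finite, i.e.\ $k\geq 1$: for $k=0$ one has $M_{(n,n)}\cong\mathbb N$ for every $n$, so collapsing the shift loses all information.

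\textbf{Your route.} You keep the shift and use $v(0)\mapsto v'(0)$ to read off $n$ and $n+k$ from the first relation $mv(0)=pv(1)$. Work in the convention of Definition~\ref{tal-wei-mon}, $nv(i)=(n+k)v(i+1)$; this is the one your values $m_0=n$, $p_0=n+k$ actually correspond to, and the relation you wrote down swaps the roles, harmlessly. In that convention the confluence step is easier than you fear. Everything reachable from $mv(0)$ has $v(0)$-coefficient $m-jn$, and nothing reachable from $pv(1)$ involves $v(0)$. Then $p$ is forced by the additive map $v(i)\mapsto (n/(n+k))^i$ into $\mathbb Q_{\geq 0}$.

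\textbf{What each approach buys.} Your invariant also separates the $k=0$ cases, since $m_0=p_0=n$, so there is no need to exclude them. Note also that $L_\K(n,n)$ is not a matrix algebra; for instance $L_\K(1,1)\cong\K[x,x^{-1}]$. The price of your route is its dependence on the order unit. If you want the paper's unpointed statement, you can recover it as follows. For $n\geq 2$, the atoms of $T_{(n,n+k)}$ are exactly the $v(i)$, because flows never decrease the number of summands. Hence any $\mathbb Z$-isomorphism sends $v(0)$ to some $v'(j)$ and can be shifted to a pointed one. For $n=1$ there are no atoms, and $x=(1+k)\,{}^{1}x$ for every $x$, which determines $k$.
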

\begin{proof}
Suppose that there is order preserving $\Z[x,x^{-1}]$-module
isomorphism $K_0^{\gr}(L_\K(n,n+k))\cong K_0^{\gr}(L_\K(n',n'+k'))$. Since the positive cones coincide with the talented monoids, we obtain a $\mathbb Z$-isomorphism $T_{(n,n+k)}\cong T_{(n',n'+k')}$. Now Proposition~\ref{goodviasm} gives an  isomorphism on the level of monogenic monoids $M_{(n,n+k)}\cong M_{(n',n'+k')}$. It is known that every monogenic monoid is completely determined, up to isomorphism, by its index $n$ and period $k$.
So it follows $n=n'$ and $k=k'$.
\end{proof}

%\danger{$K^{\gr}_0$ should be able to classify weighted polycephaly graphs as in \cite{H}.. longer project }

\noindent{{\bf Acknowledgments}}
Roozbeh Hazrat acknowledges Australian Research Council Discovery Project DP230103184. He would like to thank the Vietnam Institute for Advanced Study in Mathematics (VIASM) and Professor T.G. Nam for hosting him in Hanoi. R.J. Damalerio acknowledges the support provided by the DOST–SEI Accelerated Science and Technology Human Resource Development Program (ASTHRDP) and the Centre International de Math\'ematiques Pures et Appliqu\'ees (CIMPA), whose grant supported his research visit to Hanoi. T. G. Nam was partially supported by the Vietnam Academy of Science and Technology under grant CBCLCA.01/26-28.

\end{document}